\documentclass[12pt]{amsart}
\pagestyle{headings}

\usepackage{amsmath,amssymb,amsthm,amsfonts,mathrsfs}
\usepackage{amscd}
\usepackage{hyperref}
\usepackage{graphicx,subfigure}
\hypersetup{colorlinks=true,citecolor=green,linkcolor=red,urlcolor=blue,pdfstartview=FitH,
pdfauthor=Koziol/Xu,pdftitle=Hecke Modules and Supersingular Representations of U(2 1)}
\usepackage[divide={2.45cm,*,2.45cm}]{geometry}
\usepackage{enumerate}
\usepackage{color}
\usepackage[all]{xy}

\allowdisplaybreaks

\newtheorem{thm}{Theorem}[section]
\newtheorem{cor}[thm]{Corollary}
\newtheorem{lemma}[thm]{Lemma}
\newtheorem{prop}[thm]{Proposition}

\newtheorem{ass}[thm]{Assumption}
\newtheorem*{thmi}{Theorem (Corollary \ref{Karol*})}
\newtheorem*{thmn}{Theorem (Theorems \ref{ssingsl2} and \ref{SL2})}

\theoremstyle{remark}
\newtheorem*{remark}{Remark}

\theoremstyle{definition}
\newtheorem{defn}[thm]{Definition}

\newcommand{\hh}{\mathcal{H}}
\newcommand{\ol}[1]{\overline{#1}}
\newcommand{\oo}{\mathfrak{o}}
\newcommand{\pp}{\mathfrak{p}}
\newcommand{\ch}{\textnormal{char}}
\newcommand{\mT}{\mathcal{T}}
\newcommand{\mS}{\mathcal{S}}
\newcommand{\sub}[2]{\genfrac{}{}{0pt}{}{#1}{#2}}
\newcommand{\bb}{\mathbb{B}}
\newcommand{\uu}{\mathbb{U}}
\newcommand{\tj}{\mathbf{J}}
\newcommand{\T}{\textnormal{T}}

\begin{document}
\nocite{*}

\title{Hecke Modules and Supersingular Representations of U(2,1)}
\date{\today}
\author{Karol Kozio\l}
\address{Department of Mathematics, University of Toronto, Toronto, ON M5S 2E4, Canada} \email{karol@math.toronto.edu}
\author{Peng Xu}
\address{Mathematics Institute, University of Warwick, Coventry CV4 7AL, UK} 
\email{Peng.Xu@warwick.ac.uk}

\begin{abstract}
  Let $F$ be a nonarchimedean local field of odd residual characteristic $p$.  We classify finite-dimensional simple right modules for the pro-$p$-Iwahori-Hecke algebra $\mathcal{H}_C(G,I(1))$, where $G$ is the unramified unitary group $\textrm{U}(2,1)(E/F)$ in three variables.  Using this description when $C = \overline{\mathbb{F}}_p$, we define supersingular Hecke modules and show that the functor of $I(1)$-invariants induces a bijection between irreducible nonsupersingular mod-$p$ representations of $G$ and nonsupersingular simple right $\mathcal{H}_C(G,I(1))$-modules.  We then use an argument of Pa\v{s}k\={u}nas to construct supersingular representations of $G$.  
\end{abstract}

\maketitle

\section{Introduction}

This article is set in the framework of the mod-$p$ representation theory of $p$-adic reductive groups.  Our motivation comes from the possibility of a mod-$p$ Local Langlands Correspondence, that is to say a matching between (packets of) smooth mod-$p$ representations of a $p$-adic reductive group and certain Galois representations.  The case of $\textrm{GL}_2(\mathbb{Q}_p)$ has been most extensively studied, and a (semisimple) mod-$p$ Local Langlands Correspondence has been established by Breuil (\cite{Br03}) based on the explicit determination of the irreducible smooth mod-$p$ representations of $\textrm{GL}_2(\mathbb{Q}_p)$.  Moreover, this correspondence is compatible with the $p$-adic Local Langlands Correspondence established by the work of several mathematicians: see \cite{Br04}, \cite{Br10}, \cite{Co10}, \cite{Em10}, \cite{Ki09}, \cite{Ki10}, \cite{Pas10}, and the references therein.  The case of $\textrm{GL}_2(F)$ with $F\neq \mathbb{Q}_p$ is already much more complicated, however (see \cite{BP}).

In the present article, we investigate the smooth mod-$p$ representations of the unitary group $G = \textrm{U}(2,1)(E/F)$ , where $E/F$ is an unramified quadratic extension of nonarchimedean local fields of residual characteristic $p$.  The irreducible subquotients of parabolically induced representations have been classified by Abdellatif (\cite{Ab14a}).  We are interested in the smooth irreducible representations that do not appear in this fashion, which we call \emph{supersingular} representations (we will comment on this terminology at the end of this introduction).  These representations are the ones which are expected to play a crucial role in a potential Local Langlands Correspondence.  We now describe the ingredients in our method for constructing such representations, inspired by the work of Vign\'{e}ras and Pa\v{s}k\={u}nas.

Let $I(1)$ be the unique pro-$p$-Sylow subgroup of the standard Iwahori subgroup $I$ of $G$, and let $C$ denote an algebraically closed field.  The \emph{pro-$p$-Iwahori-Hecke algebra} $\hh_C(G,I(1))$ is the convolution algebra of compactly supported, $C$-valued functions on the double coset space $I(1)\backslash G/I(1)$.  Under a mild assumption on the characteristic of $C$, we determine explicitly the structure of the algebra $\hh_C(G,I(1))$ and describe its center.  This allows us to classify all simple finite-dimensional right modules of $\hh_C(G,I(1))$ for any field $C$ satisfying Assumption \ref{ass} (see Section \ref{halgs}).  We remark that some of the results of this section have been subsumed under recent work of Vign\'{e}ras (\cite{Vig14a}, \cite{Vig14b}, \cite{Vig14c}) and Abe (\cite{Abe14}).

The motivation for considering modules of the algebra $\hh_C(G,I(1))$ comes from the following observation.  Attaching to a smooth representation $\pi$ of $G$ its space of $I(1)$-invariants $\pi^{I(1)}$ yields a functor with values in the category of $\hh_C(G,I(1))$-modules.  If $C$ is of characteristic $p$, then $\pi^{I(1)}$ is nonzero provided $\pi$ is nonzero; this suggests that the functor of $I(1)$-invariants is likely to give information about representations generated by their $I(1)$-invariants (though in general, we don't expect an equivalence of categories (cf. \cite{Oll09})).

Using our explicit description of finite-dimensional simple $\hh_{\ol{\mathbb{F}}_p}(G,I(1))$ modules, we establish a bijection between irreducible smooth nonsupersingular representations of $G$ and certain simple modules (Corollary \ref{maincor}).  In particular, we show that the simple $\hh_{\ol{\mathbb{F}}_p}(G,I(1))$-modules not arising in this fashion are precisely those with a ``zero'' central character, and we call these modules \emph{supersingular}.  Our goal is to attach an irreducible smooth supersingular representation of $G$ to every supersingular $\hh_{\ol{\mathbb{F}}_p}(G,I(1))$-module.  The tool we will use is (homological) coefficient systems on the semisimple Bruhat-Tits building $X$ of $G$.

In \cite{SS97}, Schneider and Stuhler introduced coefficient systems on the Bruhat-Tits building and used them to study complex representations of $p$-adic reductive groups.  Coefficient systems were later used in the mod-$p$ setting by Pa\v{s}k\={u}nas to construct supersingular representations of $\textrm{GL}_2(F)$.  The use of coefficient systems in this context has proved extremely useful (cf. \cite{Pas04}), but so far has only been considered for the group $\textrm{GL}_2(F)$.  We adapt this method to representations of $\textrm{U}(2,1)(E/F)$, and define an analog of Pa\v{s}k\={u}nas' category of diagrams, which is easier to handle than (but equivalent to) the category of coefficient systems.

Next, we attach to every supersingular module $M$ a diagram ${D}_M$.  The $0$-homology of the corresponding coefficient system $\mathcal{D}_M$ is naturally a smooth $G$-representation, and we show that any of its nonzero irreducible quotients contain an $\hh_{\ol{\mathbb{F}}_p}(G,I(1))$-module isomorphic to $M$.  This implies that any such quotient is a supersingular representation of $G$ (Corollary \ref{maincor}).  

To produce such quotients, we next specialize to the case when the residue field of $F$ has size $p$.  In this setting we are able to construct explicitly an auxiliary coefficient system $\mathcal{E}_M$ (built out of injective envelopes of representations of finite reductive groups $\Gamma$ and $\Gamma'$ attached to $G$) along with a morphism $\mathcal{D}_M\rightarrow \mathcal{E}_M$.  This morphism induces a map on the $0$-homology of the coefficient systems, and we consider the representation afforded by the image $$\pi_{\mathcal{E}_M} = \textrm{im}(H_0(X,\mathcal{D}_M)\rightarrow H_0(X,\mathcal{E}_M)~).$$  The result here is the following:

\begin{thmi}
 Assume the residue field of $F$ has size $p$.  The representation $\pi_{\mathcal{E}_M}$ is nonzero, irreducible, admissible, and supersingular.  For nonisomorphic supersingular $\hh_{\ol{\mathbb{F}}_p}(G,I(1))$-modules $M,M'$, the representations $\pi_{\mathcal{E}_M},~\pi_{\mathcal{E}_{M'}}$ are nonisomorphic.
\end{thmi}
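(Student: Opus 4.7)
The plan is to analyze $\pi_{\mathcal{E}_M}$ through its $I(1)$-invariants, aiming for the identification $\pi_{\mathcal{E}_M}^{I(1)} \cong M$ as $\hh_{\ol{\mathbb{F}}_p}(G,I(1))$-modules; once this is established, the remaining properties follow by bootstrapping from results already recalled in the excerpt. For the lower bound $M \hookrightarrow \pi_{\mathcal{E}_M}^{I(1)}$, I would use that $M$ appears canonically inside $H_0(X,\mathcal{D}_M)^{I(1)}$ via the defining structure of the diagram $D_M$, together with the fact that the morphism $\mathcal{D}_M \to \mathcal{E}_M$ is constructed so as to embed this $M$-piece faithfully into $\mathcal{E}_M$ at the relevant stalks. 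For the reverse inclusion, I would compute $H_0(X,\mathcal{E}_M)^{I(1)}$ directly: since $\mathcal{E}_M$ is assembled from specific summands of injective envelopes of irreducible mod-$p$ representations of the finite groups $\Gamma$ and $\Gamma'$, whose decompositions are made explicit in Section 5 under the assumption $q = p$, this computation is tractable, and one can then verify that the image $\pi_{\mathcal{E}_M}$ meets these $I(1)$-fixed vectors in exactly $M$.

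With $\pi_{\mathcal{E}_M}^{I(1)} = M$ one-dimensional and nonzero, irreducibility follows by a standard argument: any nonzero subrepresentation $V \subseteq \pi_{\mathcal{E}_M}$ has $V^{I(1)} \neq 0$ (since $\pi_{\mathcal{E}_M}$ is smooth mod-$p$), forcing $V^{I(1)} = M$, and since $\pi_{\mathcal{E}_M}$ is generated over $G$ by the image of $M$ (inherited from the fact that $\mathcal{D}_M$ is $G$-generated by its diagram data), we conclude $V = \pi_{\mathcal{E}_M}$. Supersingularity then follows from Corollary \ref{supersingcor}, applied to $\pi_{\mathcal{E}_M}$ as a nonzero irreducible quotient of $H_0(X,\mathcal{D}_M)$. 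Admissibility reduces to the finite-dimensionality of $H_0(X,\mathcal{E}_M)^K$ for each compact open subgroup $K \leq G$, which is immediate from the finite-dimensional stalks of $\mathcal{E}_M$ together with the fact that $K$ has finitely many orbits (with finite stabilizers) on each $G$-orbit of facets of $X$. Finally, if $\pi_{\mathcal{E}_M} \cong \pi_{\mathcal{E}_{M'}}$ as $G$-representations, then taking $I(1)$-invariants yields $M \cong M'$ as $\hh_{\ol{\mathbb{F}}_p}(G,I(1))$-modules, giving the claimed injectivity of the assignment $M \mapsto \pi_{\mathcal{E}_M}$.

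The principal obstacle is the precise identification $\pi_{\mathcal{E}_M}^{I(1)} = M$, and in particular the upper bound: this requires a careful analysis of how the summands of injective envelopes appearing in the stalks of $\mathcal{E}_M$ interact with the pro-$p$-Iwahori-fixed vectors of $H_0(X,\mathcal{E}_M)$, and it is here that the hypothesis $q = p$ enters essentially, through the explicit decomposition results for highest-weight modules and injective envelopes proved in Section 5. The secondary difficulty is verifying that $\pi_{\mathcal{E}_M}$ is genuinely generated over $G$ by the image of $M$, which amounts to checking that no ``extra'' generators are needed beyond those already encoded in the diagram $D_M$, and this again leans on the structural choices made in constructing $\mathcal{E}_M$.
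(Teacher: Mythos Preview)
Your strategy hinges on the identification $\pi_{\mathcal{E}_M}^{I(1)} = M$, and specifically on the upper bound $\pi_{\mathcal{E}_M}^{I(1)} \subseteq M$. This is where the argument breaks down. You can indeed compute $H_0(X,\mathcal{E}_M)^{I(1)}$: by Proposition~\ref{isomorphism} it is $\textnormal{inj}_I(\textnormal{X})^{I(1)} \cong \textnormal{X}$, which has dimension $\sum_\mu m_{\rho,\mu}$ and is typically much larger than one. But determining which of these fixed vectors lie in the \emph{image} $\pi_{\mathcal{E}_M}$ would require knowing $\pi_{\mathcal{E}_M}$ already---the image is the $G$-span of a single vector inside $H_0(X,\mathcal{E}_M)$, and nothing prevents that span from picking up further $I(1)$-invariants. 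Indeed there is no a priori reason to expect $\dim_{\ol{\mathbb{F}}_p}\pi_{\mathcal{E}_M}^{I(1)}=1$ (compare the $\textrm{GL}_2(\mathbb{Q}_p)$ case, where supersingular representations have two-dimensional pro-$p$-Iwahori invariants). Without this upper bound your irreducibility and distinguishing arguments both collapse.

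The paper circumvents this by never computing $\pi_{\mathcal{E}_M}^{I(1)}$. The key structural input is the \emph{purity} of $E_M$: the restriction maps are isomorphisms, so Proposition~\ref{isomorphism} identifies $H_0(X,\mathcal{E}_M)|_K$ with $\textnormal{inj}_K(\rho)$, whose $K$-socle is the single irreducible $\rho$. Any nonzero $G$-subrepresentation $\pi'\subseteq\pi_{\mathcal{E}_M}$ then has $\textnormal{soc}_K(\pi'|_K)\neq 0$, hence contains $\rho$, hence contains $\rho^{I(1)}\cong M$; since $M$ generates $\pi_{\mathcal{E}_M}$ over $G$, irreducibility follows. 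The invariant used to distinguish the $\pi_{\mathcal{E}_M}$ is not $\pi^{I(1)}$ but $\textnormal{soc}_K(\pi|_K)^{I(1)}$, which the same reasoning shows recovers $M$. Your admissibility argument also needs repair: the stalks $\textnormal{inj}_K(\rho)$ are injective envelopes in the smooth category of the profinite group $K$ and are \emph{not} finite-dimensional. The paper instead observes that $\pi_{\mathcal{E}_M}|_K$ embeds in $\textnormal{inj}_K(\rho)$, so its $K$-socle has finite length, and then invokes Corollary~\ref{finiteness of I_1 of inj}.
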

We remark that while $\mathcal{D}_M$ is uniquely determined, the choice of the coefficient system $\mathcal{E}_M$ is in general not unique.  Therefore, to every supersingular module $M$ we attach at least one supersingular representation; in this way, we construct at least $p^2(p+1)$ supersingular representations of $G$.

We next address the shortcomings of our method when the residue field of $F$ has size greater than $p$.  As mentioned before, our method relies on the comparison of injective envelopes for representations of the finite groups $\Gamma$ and $\Gamma'$.  When the residue field of $F$ is larger than $\mathbb{F}_p$, we demonstrate cases where the construction of Section \ref{init} would produce a coefficient system $\mathcal{E}_M$ which is ``too big,'' in the sense that we cannot guarantee irreducibility of the resulting representation.  Our main tool will be Dordowsky's Diplomarbeit (\cite{Do88}), in which the dimensions of injective envelopes of representations of $\Gamma$ are computed.

To conclude, we draw some comparisons between our results and the analogous results for the group $\textrm{SL}_2(F)$, based on results of Abdellatif in \cite{Ab13}.  The action of $\textrm{SL}_2(F)$ on its Bruhat-Tits tree $X_S$ partitions the set of vertices into two disjoint orbits and acts transitively on the edges, and therefore the results of Section \ref{diagsandcoeffs} (for $\textnormal{U}(2,1)(E/F)$) carry over formally to $\textrm{SL}_2(F)$.  When the residue field of $F$ is $\mathbb{F}_p$, we attach to every supersingular $\hh_{\ol{\mathbb{F}}_p}(\textrm{SL}_2(F),I_S(1))$-module $M_S$ two coefficient systems $\mathcal{D}_{M_S}$ and $\mathcal{E}_{M_S}$.  There is one striking difference between this case and the case of $\textrm{U}(2,1)(E/F)$, however: when the residue field of $F$ has size $p$, there is a natural choice of auxiliary diagram $\mathcal{E}_{M_S}$.  

\begin{thmn}
 Assume the residue field of $F$ has size $p$.  For each of the $p$ nonisomorphic supersingular $\hh_{\ol{\mathbb{F}}_p}(\textnormal{SL}_2(F),I_S(1))$-modules $M_S$ there is a pair of associated coefficient systems $(\mathcal{D}_{M_S},\mathcal{E}_{M_S})$.  The resulting $\textnormal{SL}_2(F)$-representation afforded by $$\pi_{{M_S}} = \textnormal{im}(H_0(X_S,\mathcal{D}_{M_S})\rightarrow H_0(X_S,\mathcal{E}_{M_S})~)$$ is nonzero, irreducible, admissible, and supersingular.  For nonisomorphic $\hh_{\ol{\mathbb{F}}_p}(\textnormal{SL}_2(F),I_S(1))$-modules $M_S,M_S'$, the representations $\pi_{{M_S}},~\pi_{{M_S'}}$ are nonisomorphic.  In particular, when $F = \mathbb{Q}_p,$ we recover in this way all $p$ nonisomorphic supersingular representations of $\textnormal{SL}_2(\mathbb{Q}_p)$ as classified in \cite{Ab13}.  
\end{thmn}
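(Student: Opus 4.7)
The plan is to mirror the two-stage construction of Sections~6 and 7. Since $\textnormal{SL}_2(F)$ acts on its Bruhat--Tits tree $X_S$ with two vertex orbits and a single edge orbit, the entire formalism of Section~6---$G$-equivariant coefficient systems, the equivalence with diagrams, and the computation of $H_0$---applies verbatim, as already noted in the introduction. The classification of supersingular $\mathcal{H}_{\ol{\mathbb{F}}_p}(\textnormal{SL}_2(F),I_S(1))$-modules, obtained by the same elimination argument used for $\textnormal{U}(2,1)(E/F)$, yields exactly $p$ such modules when $q=p$, each one-dimensional.

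Given such an $M_S$, I would attach the diagram $\mathcal{D}_{M_S}$ as in Section~7: the vertex pieces are compactly induced from the appropriately twisted $M_S$ to the two vertex stabilizers, the edge piece is induced to the edge stabilizer, and the structure maps are the equivariant inclusions. This is manifestly canonical in $M_S$. To construct $\mathcal{E}_{M_S}$, each vertex piece is embedded into the injective envelope, in the smooth representation category of the ambient maximal compact, of a chosen irreducible constituent of its cosocle, and the two vertex pieces are glued along the edge stabilizer. The source of the canonicity assertion is that, in contrast to the $\textnormal{U}(2,1)(E/F)$ case where these injective envelopes admit multiplicities that force genuine choices, the Carter--Lusztig/highest-weight dictionary of Section~5 specialized to $\textnormal{SL}_2(\mathbb{F}_p)$ (with $q=p$) renders the relevant injective envelopes multiplicity-free in the range that matters, so the gluing is forced.

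Once the morphism $\mathcal{D}_{M_S}\to\mathcal{E}_{M_S}$ is in place, set $\pi_{M_S}$ to be the image on $H_0$. Nonvanishing is verified on a single vertex, where the map on isotypic components is nonzero by construction. Admissibility follows because $\mathcal{E}_{M_S}$ is built from finite-dimensional pieces of bounded type, which controls $\pi_{M_S}^K$ for every compact open $K$. Supersingularity is a formal consequence of Corollary~\ref{supersingcor}: the $I_S(1)$-invariants of $\pi_{M_S}$ contain a copy of $M_S$, whose central Hecke character is zero. Distinctness across nonisomorphic $M_S$ follows because any isomorphism $\pi_{M_S}\cong\pi_{M_S'}$ induces an isomorphism of Hecke modules of $I_S(1)$-invariants, and the canonical form of $\mathcal{E}_{M_S}$ will let us extract $M_S$ from $\pi_{M_S}^{I_S(1)}$ uniquely.

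The main obstacle I anticipate is irreducibility. The argument I envisage runs as follows: let $\pi'\subset\pi_{M_S}$ be a nonzero subrepresentation; its $I_S(1)$-invariants form a nonzero Hecke submodule of $\pi_{M_S}^{I_S(1)}$, and the canonical (multiplicity-free) shape of $\mathcal{E}_{M_S}$ pins down this Hecke module tightly enough to force $\pi'{}^{I_S(1)}$ to contain the distinguished copy of $M_S$. A building-theoretic propagation argument along $X_S$, entirely analogous to the one in the $\textnormal{U}(2,1)$ setting of Section~7 and exploiting the pro-$p$ nature of $I_S(1)$, then upgrades this to $\pi'=\pi_{M_S}$. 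For the closing assertion about $F=\mathbb{Q}_p$, Abdellatif's classification \cite{Ab11} gives exactly $p$ nonisomorphic smooth irreducible supersingular representations of $\textnormal{SL}_2(\mathbb{Q}_p)$, and we have just produced $p$ mutually nonisomorphic such representations, one for each supersingular $M_S$; a counting comparison shows the two lists coincide.
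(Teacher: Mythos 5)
Your proposal follows essentially the same route as the paper: the Section~6 formalism transports to $\mathrm{SL}_2(F)$ unchanged, the initial diagram $D_{M_S}$ is built from Carter--Lusztig data exactly as in Definition~\ref{sl2diags}, the pure diagram is $(\textnormal{inj}_{K_S}(\rho),\textnormal{inj}_{K_S'}(\rho'),\textnormal{inj}_{K_S}(\rho)|_{I_S})$, and irreducibility, admissibility, supersingularity, and pairwise distinctness are obtained by transporting Theorems~\ref{Karol-3} and Corollary~\ref{Karol*}, with the final identification against \cite{Ab11} by a counting argument. Your heuristic for canonicity (``multiplicity-free in the range that matters'') is a bit looser than the paper's actual mechanism---that for $q=p$ the two injective envelopes $\textnormal{inj}_{K_S}(\rho)$ and $\textnormal{inj}_{K_S'}(\rho')$ already restrict \emph{isomorphically} to $I_S$, so no auxiliary summands need be chosen, in contrast to the $\mathrm{U}(2,1)$ case where one must pad $\mathrm{P}'$---but this is a matter of precision rather than a gap.
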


\noindent\textbf{Remark on Terminology.}  We briefly address our choice of nomenclature.  The notion of supersingularity was introduced by Barthel and Livn\'{e} (\cite{BL94} and \cite{BL95}) in their classification of smooth,  irreducible, nonsupercuspidal mod-$p$ representations of $\textrm{GL}_2(F)$.  For a general connected reductive group, an irreducible smooth representation $\pi$ is called \emph{supersingular} if the localization of a certain Hom-space associated to $\pi$ is trivial, while $\pi$ called \emph{supercuspidal} if it is not a subquotient of a parabolically induced representation.  Forthcoming work of Abe--Henniart--Herzig--Vign\'{e}ras (\cite{AHHV}) establishes the equivalence of these two notions.  In anticipation of these results, we will make no distinction between supercuspidal and supersingular representations, and henceforth only use the term supersingular.

\noindent\textbf{Acknowledgements.}   The authors would like to thank their advisors, Professors Rachel Ollivier and Shaun Stevens, for suggesting this problem, as well as for many helpful and illuminating discussions throughout the course of working on this paper.  Part of this work was done while the first author was a visitor at the University of East Anglia, and he wishes to thank the institution for their support.  Additionally, the first author would like to warmly thank Professor James Humphreys for several enlightening discussions, and for providing a copy of Dordowsky's thesis.  The first author was supported by NSF Grant DMS-0739400; the second author was supported by EPSRC Grant EP/H00534X/1.

\section{Notation}

\subsection{General Notation}

Fix an odd prime number $p$.  Let $F$ be a nonarchimedean local field of residual characteristic $p$, with ring of integers $\mathfrak{o}_F$ and maximal ideal $\mathfrak{p}_F$.  Fix a uniformizer $\varpi$ and let $k_F = \mathfrak{o}_F/\mathfrak{p}_F$ denote the residue field of size $q = p^f$.  We fix also a separable closure $\overline{F}$ of $F$, and let $k_{\ol{F}}$ denote its residue field. 

Let $E$ denote the unique unramified extension of degree 2 in $\ol{F}$.  We denote by $\oo_E, \pp_E$, etc., the analogous objects for $E$.  Since $E$ is unramified, we may and do take $\varpi$ as our uniformizer.  Let $\iota:k_{\ol{F}}\stackrel{\sim}{\rightarrow} \ol{\mathbb{F}}_p$ denote a fixed isomorphism, and assume that every $\ol{\mathbb{F}}_p^\times$-valued character factors through $\iota$.  We identify $k_F$ and $k_E$ with $\mathbb{F}_q$ and $\mathbb{F}_{q^2}$, respectively, using the isomorphism $\iota$.  We will also identify $\mathbb{F}_{q^2}^\times$ with the image of the Teichm\"{u}ller lifting map $[\ \cdot\ ]:\mathbb{F}_{q^2}^\times\rightarrow \mathfrak{o}_E^\times$ when convenient.  

We let $x\mapsto\ol{x}$ denote the nontrivial Galois automorphism of $E$ fixing $F$ (which induces the automorphism $x\mapsto x^q$ on $\mathbb{F}_{q^2}$).  We shall write $E = F(\sqrt{\epsilon})$, where $\epsilon\in \mathfrak{o}_F^\times$ is some fixed but arbitrary nonsquare unit, so that $\ol{\sqrt{\epsilon}} = - \sqrt{\epsilon}$.  We define $\textnormal{U}(1)(E/F)$ the kernel of the norm map
\begin{center}
 \begin{tabular}{rl}
  $\textnormal{N}_{E/F}:$ & $E^\times\rightarrow F^\times$\\
 & $x\mapsto x\ol{x}$.
 \end{tabular}
\end{center}

Denote by $G$ the $F$-rational points of the algebraic group $\mathbf{U}(2,1)$, defined and quasisplit over $F$.  Explicitly, we take $G$ to have the form 
$$G = \left\{g\in \textrm{GL}_3(E): \ol{g}^\top\begin{pmatrix}0 & 0 & 1\\ 0 & 1 & 0\\ 1 & 0 & 0\end{pmatrix}g = \begin{pmatrix}0 & 0 & 1\\ 0 & 1 & 0\\ 1 & 0 & 0\end{pmatrix}\right\}.$$

The group $G$ possesses, up to conjugacy, two maximal compact subgroups (cf. \cite{Ti79}, Sections 2.10 and 3.2), given by $$K := \textrm{GL}_3(\mathfrak{o}_E)\cap G\quad \textrm{and}\quad K' := \begin{pmatrix}\mathfrak{o}_E & \mathfrak{o}_E & \mathfrak{p}_E^{-1}\\ \mathfrak{p}_E & \mathfrak{o}_E & \mathfrak{o}_E \\ \mathfrak{p}_E & \mathfrak{p}_E & \mathfrak{o}_E \end{pmatrix} \cap G.$$  
Let $K_1, K'_1$ be the following subgroups of $G$:
$$K_1 :=\begin{pmatrix}1 + \mathfrak{p}_E & \mathfrak{p}_E & \mathfrak{p}_E\\ \mathfrak{p}_E & 1 + \mathfrak{p}_E & \mathfrak{p}_E \\ \mathfrak{p}_E & \mathfrak{p}_E & 1 + \mathfrak{p}_E\end{pmatrix}\cap G,\qquad K'_1 := \begin{pmatrix}1 + \mathfrak{p}_E & \mathfrak{o}_E & \mathfrak{o}_E \\ \mathfrak{p}_E & 1 + \mathfrak{p}_E & \mathfrak{o}_E \\ \mathfrak{p}_E^2 & \mathfrak{p}_E & 1 + \mathfrak{p}_E\end{pmatrix}\cap G.$$
The group $K_1$ (resp. $K'_1$) is the maximal normal pro-$p$ subgroup of $K$ (resp. $K'$).  We define 
$$\Gamma := K/K_1 \cong \textnormal{U}(2,1)(\mathbb{F}_{q^2}/\mathbb{F}_{q}),\qquad \Gamma' := K'/K'_1 \cong (\textnormal{U}(1,1)\times\textnormal{U}(1))(\mathbb{F}_{q^2}/\mathbb{F}_{q}),$$
where $\textnormal{U}(1,1)(\mathbb{F}_{q^2}/\mathbb{F}_q)$ denotes the unitary group defined with respect to the matrix $\left(\begin{smallmatrix}0 & 1 \\ 1 & 0\end{smallmatrix}\right)$.  

We let $\bb$ denote the upper-triangular Borel subgroup of $\Gamma$ and $\uu$ its unipotent radical; likewise, let $\bb'$ denote the \emph{lower}-triangular Borel subgroup of $\Gamma'$ and $\uu'$ its unipotent radical.  The groups $\uu$ and $\uu'$ are $p$-Sylow subgroups of $\Gamma$ and $\Gamma'$, respectively.  We define the Iwahori subgroup as $I := K \cap K'$, which we may also think of as the preimage in $K$ under the reduction-modulo-$\varpi$ map of $\mathbb{B}$.  We denote by $I(1)$ the unique pro-$p$-Sylow subgroup of $I$, which is the preimage of $\uu$.  

Let $U$ and $U^-$ denote the upper- and lower-triangular unipotent elements of $G$, respectively, and define
$$u(x,y):=\begin{pmatrix}1 & x & y \\ 0 & 1 & -\ol{x} \\ 0 & 0 & 1\end{pmatrix},\qquad u^-(x,y):=\begin{pmatrix}1 & 0 & 0\\ x & 1 & 0\\ y & -\ol{x} & 1\end{pmatrix},$$
where $x,y\in E$ satisfy $x\ol{x} + y + \ol{y} = 0$.  We have $u(x,y)^{-1} = u(-x,\ol{y}), u^-(x,y)^{-1} = u^-(-x,\ol{y})$.   

We define the following distinguished elements of $G$:
\begin{center}
\begin{tabular}{rc}
$\displaystyle{s:=\begin{pmatrix}0 & 0 & 1\\ 0 & 1 & 0\\ 1 & 0 & 0\end{pmatrix}},$ &  $\displaystyle{s':=\begin{pmatrix} 0 & 0 & \varpi^{-1}\\0 & 1 & 0\\ \varpi & 0 & 0\end{pmatrix}},$\\
$\displaystyle{n_s:=\begin{pmatrix}0 & 0 & -\sqrt{\epsilon}^{-1}\\ 0 & 1 & 0\\ \sqrt{\epsilon}& 0 & 0\end{pmatrix}},$ &  $\displaystyle{n_{s'}:=\begin{pmatrix}0 & 0 & -\varpi^{-1}\sqrt{\epsilon}^{-1}\\ 0 & 1 & 0\\ \varpi\sqrt{\epsilon} & 0 & 0\end{pmatrix}},$\\
$\displaystyle{\alpha := s's = \begin{pmatrix}\varpi^{-1} & 0 & 0\\ 0 & 1 & 0\\ 0 & 0 & \varpi\end{pmatrix}},$ & $\displaystyle{\alpha^{-1}:= ss' = \begin{pmatrix}\varpi & 0 & 0\\ 0 & 1 & 0\\ 0 & 0 & \varpi^{-1}\end{pmatrix}}.$
\end{tabular}
\end{center}

\subsection{Weyl Groups}  The diagonal maximal torus $T$ of $G$ consists of all elements of the form $$\begin{pmatrix}a & 0 & 0 \\ 0 & \delta & 0 \\ 0 & 0 & \ol{a}^{-1}\end{pmatrix},$$ with $a\in E^\times, \delta\in \textnormal{U}(1)(E/F)$.  Note that $T$ is not split over $F$.  Let 
$$T_0 := T\cap K = T\cap K',\quad T_1:= T\cap K_1 = T\cap K'_1,$$ $$H := T_0/T_1 \cong I/I(1) \cong \mathbb{F}_{q^2}^\times\times \textnormal{U}(1)(\mathbb{F}_{q^2}/\mathbb{F}_q).$$  
We will identify the characters of $H$ and those of $I/I(1)$.

Let $N$ denote the normalizer of $T$ in $G$, and define the finite and affine Weyl group, respectively, as
$$W := N/T\qquad\textnormal{and}\qquad W_{\textnormal{aff}} := N/T_0.$$
The group $W_{\textrm{aff}}$ is a Coxeter group, generated by the classes of the two reflections $s$ and $s'$.  We have a decomposition $G = INI$, where two cosets $InI$ and $In'I$ are equal if and only if $n$ and $n'$ have the same image in $W_{\textrm{aff}}$.  This yields the Bruhat decomposition for $G$: 
$$G = \bigsqcup_{w\in W_{\textrm{aff}}}IwI;$$ 
here we engage in the standard abuse of notation, letting $IwI$ denote $I\dot{w}I$ for any preimage $\dot{w}\in N$ of $w\in W_{\textnormal{aff}}$.  We will take as our double coset representatives the elements $\alpha^n, n_s\alpha^n$, for $n\in\mathbb{Z}$.  We let $\ell$ denote the length of an element of $W_{\textrm{aff}}$, defined by $$q^{\ell(w)} = [IwI:I]$$ (cf. Section 3.3.1 in \cite{Ti79}).  In particular, we have $\ell(n_s) = 3,\ \ell(n_{s'}) = 1$.  Moreover, one easily checks the pair $(I,N)$ forms a BN pair (cf. \cite{AB08}, Definition 6.55).

\section{Hecke Algebras}\label{halgs}

\subsection{Pro-$p$-Iwahori-Hecke Algebra}  We let $C$ denote an algebraically closed field, and $\mathcal{REP}_C(G)$ the category of smooth representations of $G$.  Given a smooth $C$-representation $\sigma$ of a closed subgroup $J$ of $G$, we denote by $\textnormal{ind}_J^G(\sigma)$ (resp. $\textnormal{c-ind}_J^G(\sigma)$) the induction (resp. compact induction) of $\sigma$ from $J$ to $G$. 

Let $\pi$ be a smooth $C$-representation of $G$.  Frobenius Reciprocity gives 
$$\pi^{I(1)}\cong \textrm{Hom}_{I(1)}(1,\pi|_{I(1)})\cong \textrm{Hom}_G(\textrm{c-ind}_{I(1)}^G(1),\pi),$$ 
where $1$ denotes the trivial character of $I(1)$.  The \emph{pro-$p$-Iwahori-Hecke algebra} 
$$\hh_C(G,I(1)) := \textnormal{End}_G(\textnormal{c-ind}_{I(1)}^G(1))$$ 
is the algebra of $G$-equivariant endomorphisms of the universal module $\textnormal{c-ind}_{I(1)}^G(1)$.  This algebra has a natural right action on $\textrm{Hom}_G(\textrm{c-ind}_{I(1)}^G(1),\pi)$ by pre-composition, which induces a right action on $\pi^{I(1)}$.  In this way, we obtain the functor of $I(1)$-invariants, $\pi\mapsto\pi^{I(1)}$, from the category of smooth $C$-representations of $G$ to the category of right $\hh_C(G,I(1))$-modules.  

By adjunction, we have a natural identification $$\hh_C(G,I(1)) \cong \textnormal{Hom}_{I(1)}(1,\textnormal{c-ind}_{I(1)}^G(1)|_{I(1)})\cong \textnormal{c-ind}_{I(1)}^G(1)^{I(1)},$$ so we may view endomorphisms of $\textnormal{c-ind}_{I(1)}^G(1)$ as compactly supported functions on $G$ which are $I(1)$-biinvariant.  This leads to the following definition.

\begin{defn}
 Let $g\in G$.  We let $\T_g\in \hh_C(G,I(1))$ denote the endomorphism of $\textnormal{c-ind}_{I(1)}^G(1)$ corresponding by adjunction to the characteristic function of ${I(1)gI(1)}$; in particular, $\T_g$ maps the characteristic function of $I(1)$ to the characteristic function of $I(1)gI(1)$.  
\end{defn}

Since $W_{\textrm{aff}} = N/T_0$ is a set of representatives for the double coset space $I\backslash G/I$, the group $N/T_1$ gives a set of representatives for $I(1)\backslash G/I(1)$.  We therefore only consider the operators $\T_n$, where $n$ is a representative of a coset in $N/T_1$.  These operators give a basis for $\hh_C(G,I(1))$ as a vector space over $C$.  Using the natural adjunction isomorphisms above, we see that if $\pi$ is a smooth $C$-representation of $G$, $v\in \pi^{I(1)}$, and $n\in N$ then 
\begin{equation}\label{act}
v\cdot \T_n = \sum_{u \in I(1)\backslash I(1)nI(1)}u^{-1}.v = \sum_{u\in I(1)/I(1)\cap n^{-1}I(1)n} un^{-1}.v.
\end{equation}

\subsection{Decomposition of the pro-$p$-Iwahori-Hecke Algebra}\label{decompalg}

Let $\widehat{H}$ denote the group of all $C^\times$-valued characters of $H = T_0/T_1$, and let $\chi\in\widehat{H}$.  We define $\zeta:\mathbb{F}_{q^2}^\times\rightarrow C^\times$ and $\eta:\textnormal{U}(1)(\mathbb{F}_{q^2}/\mathbb{F}_q)\rightarrow C^\times$ by  
$$\zeta(a) := \chi\begin{pmatrix}a & 0 & 0 \\ 0 & \ol{a}a^{-1} & 0 \\ 0 & 0 & \ol{a}^{-1}\end{pmatrix},\qquad \eta(\delta) := \chi\begin{pmatrix}1 & 0 & 0 \\ 0 & \delta & 0 \\ 0 & 0 & 1\end{pmatrix},$$
where $a\in \mathbb{F}_{q^2}^\times, \delta \in \textnormal{U}(1)(\mathbb{F}_{q^2}/\mathbb{F}_q)$.  We stress that the characters $\zeta$ and $\eta$ \emph{depend on $\chi$}, though we will supress this dependence from our notation, and write $\chi = \zeta\otimes\eta$ when convenient.  We denote by $\chi^s$ the character given by $\chi^s(h) := \chi(n^{-1}hn)$, where $h\in H$ and $n\in N\smallsetminus T$.  

\begin{defn}
Let $\chi\in\widehat{H}$.  We say $\chi$ is \emph{of trivial Iwahori type} if $\chi$ factors through the determinant, $\chi$ is \emph{hybrid} if $\chi^s = \chi$, but $\chi$ does not factor through the determinant, and $\chi$ is \emph{regular} if $\chi^s \neq \chi$.  
\end{defn}
Note that $\chi = \zeta\otimes\eta$ factors through the determinant if and only if $\zeta$ is trivial, and $\chi^s = \chi$ if and only if $\zeta^{q+1}$ is trivial.  For $\chi\in\widehat{H}$, we define a representation $\gamma_\chi$ of $H$ by 
$$\gamma_\chi := \begin{cases}\chi & \textnormal{if}~ \chi^s = \chi,\\ \chi\oplus\chi^s & \textnormal{if}~ \chi^s\neq \chi.\end{cases}$$  

From this point onwards, we make the following technical assumption:
\begin{ass}\label{ass}
 The integers $\ch(C)$ and $|H|$ are relatively prime.
\end{ass}

With this hypothesis, we will decompose $\hh_C(G,I(1))$ into blocks indexed by $W$-orbits of $C$-characters of $H$.  

\begin{defn}
 For a $C$-character $\chi$ of $H$, we define $$e_\chi := |H|^{-1}\sum_{h\in H}\chi(h)\T_h,$$  $$e_{\gamma_\chi} := \begin{cases}e_\chi & \textnormal{if}~\chi^s = \chi,\\ e_\chi + e_{\chi^s} & \textnormal{if}~\chi^s\neq\chi.\end{cases}$$ 
Here $\T_h\in\hh_{C}(G,I(1))$ denotes the operator $\T_{t_0}$, where $t_0\in T_0$ is a preimage of $h\in H$.  
\end{defn}

The operators $e_\chi$ have the following properties:
\begin{itemize}
 \item $e_\chi e_\chi = e_\chi$,
 \item $e_\chi e_{\chi'} = 0$ for $\chi\neq \chi'$,
 \item $\T_1 = \sum_{\chi\in \widehat{H}}e_\chi$.
 
\end{itemize}
These follow readily from the orthogonality relations of characters.  Applying these relations to $\pi^{I(1)}$ gives the following lemma.

\begin{lemma}\label{decomp}
 Let $\pi$ be a smooth $C$-representation of $G$.  Then $(\pi^{I(1)})\cdot e_\chi = \pi^{I,\chi}$, and $\pi^{I(1)}\cong \bigoplus_{\chi\in \widehat{H}} (\pi^{I(1)})\cdot e_\chi = \bigoplus_{\chi\in \widehat{H}} \pi^{I,\chi}$.  Here $\pi^{I,\chi} = \{v\in \pi : i.v = \chi(i)v\ \textrm{for every}\ i\in I\}$ is the $\chi$-isotypic subspace of $\pi$.    
\end{lemma}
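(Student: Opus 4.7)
The proof plan is to reduce the claim to the well-known isotypic decomposition for actions of a finite abelian group whose order is invertible, after first computing $v \cdot \T_h$ explicitly when $h \in H$. Assumption \ref{ass} guarantees that $|H|$ is a unit in $C$, so the idempotent $e_\chi$ is well defined, and it also ensures that representations of $H$ over $C$ are semisimple.

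First I would compute the action of $\T_h$ on $\pi^{I(1)}$. Pick $t_0 \in T_0$ lifting $h \in H = T_0/T_1$. Since $T_0$ normalizes $I(1)$, we have $I(1) \cap t_0^{-1}I(1)t_0 = I(1)$, so Lemma \ref{act} gives simply $v \cdot \T_h = t_0^{-1}.v$ for $v \in \pi^{I(1)}$; because $T_1 \subseteq I(1)$, this action depends only on $h$. Consequently, for any $v \in \pi^{I(1)}$,
\[
v \cdot e_\chi \;=\; |H|^{-1}\sum_{h\in H}\chi(h)\, h^{-1}.v,
\]
where $h \in H$ acts on $\pi^{I(1)}$ via any lift in $T_0$. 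Using $I = I(1) \rtimes T_0$ and the fact that elements of $I(1)$ already fix $v$, checking $I$-equivariance reduces to checking $T_0$-equivariance. For $h_0 \in H$, substituting $h' = h h_0^{-1}$ in the sum gives
\[
h_0.(v \cdot e_\chi) \;=\; |H|^{-1}\sum_{h'\in H}\chi(h'h_0)(h')^{-1}.v \;=\; \chi(h_0)(v \cdot e_\chi),
\]
so $v \cdot e_\chi \in \pi^{I,\chi}$. Conversely, if $v \in \pi^{I,\chi}$, then $v \in \pi^{I(1)}$ (since $\chi$ is trivial on $I(1)$), and $v \cdot \T_h = h^{-1}.v = \chi(h)^{-1}v$, whence $v \cdot e_\chi = v$ and $v \cdot e_{\chi'} = 0$ for $\chi' \neq \chi$ by orthogonality of characters of $H$. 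This establishes $(\pi^{I(1)}) \cdot e_\chi = \pi^{I,\chi}$.

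For the direct sum decomposition, I would invoke the three bulleted properties of the $e_\chi$ listed before the lemma: $e_\chi^2 = e_\chi$, $e_\chi e_{\chi'} = 0$ for $\chi \neq \chi'$, and $\sum_{\chi \in \widehat{H}} e_\chi = \mathrm{id}$. Applied to $\pi^{I(1)}$ via the right $\hh_C(G,I(1))$-action, these give $\pi^{I(1)} = \bigoplus_{\chi \in \widehat{H}} (\pi^{I(1)}) \cdot e_\chi$, which by the previous paragraph coincides with $\bigoplus_{\chi \in \widehat{H}} \pi^{I,\chi}$.

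There is no real obstacle here; the only subtle point is making sure the right $\hh_C$-action via $\T_h$ genuinely coincides with the inverse-action of $h$ on $I(1)$-invariants, which is exactly what Lemma \ref{act} supplies once one notes that $T_0$ normalizes $I(1)$. Assumption \ref{ass} is used only to make the $|H|^{-1}$ factor legitimate and, equivalently, to guarantee the averaging argument is valid.
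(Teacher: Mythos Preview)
Your proof is correct and follows essentially the same approach as the paper: both use Lemma \ref{act} together with the fact that lifts of $H$ normalize $I(1)$ to identify $(\pi^{I(1)})\cdot e_\chi$ with $\pi^{I,\chi}$, and then invoke the orthogonality properties of the $e_\chi$ for the direct sum. Your version simply spells out the computation of $v\cdot\T_h = t_0^{-1}.v$ and the averaging argument explicitly, whereas the paper leaves these as immediate consequences; one minor imprecision is writing $I = I(1)\rtimes T_0$ (the intersection is $T_1$, not trivial), but the argument only needs $I = I(1)T_0$, which holds.
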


\begin{proof}
 Since $I(1)$ is normal in $I$ and $I/I(1)\cong H$ is abelian and of order prime to $\ch(C)$, the action of $I$ on $\pi^{I(1)}$ is semisimple and decomposes as a sum of characters.  As (lifts of) elements of $H$ normalize $I(1)$, equation \eqref{act} implies that $(\pi^{I(1)})\cdot e_\chi= \pi^{I,\chi}$.  \end{proof}

We now use the (central) idempotents $e_{\gamma_\chi}$ to decompose the algebra $\hh_C(G,I(1))$.  Denote by $\hh_C(G,\gamma_\chi)$ the algebra $\textrm{End}_G(\textrm{c-ind}_I^G(\gamma_\chi))$.  

\begin{prop}\label{orbits}
 There is an isomorphism of $C$-algebras
$$\hh_C(G,I(1)) \cong \bigoplus_{\gamma_\chi}\hh_C(G,\gamma_\chi)\cong \bigoplus_{\gamma_\chi}\hh_C(G,I(1))e_{\gamma_\chi},$$
the sums taken over all $W$-orbits of $C$-characters of $H$.
\end{prop}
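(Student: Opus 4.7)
The plan is to realize $\hh_C(G,I(1))$ as the endomorphism algebra of an explicit direct-sum decomposition of the universal module $\textrm{c-ind}_{I(1)}^G(1)$ indexed by $\widehat{H}$, and then to show that the only nonzero Hom spaces between these summands occur within a single $W$-orbit of characters.

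First I would use Assumption \ref{ass} to decompose the semisimple group algebra $C[H] = \bigoplus_{\chi \in \widehat{H}} C_\chi$, and identify $\textrm{c-ind}_{I(1)}^I(1) \cong C[I/I(1)] = C[H]$ as $C[I]$-modules (with $I$ acting through its quotient $H$). Transitivity of compact induction then yields
$$\textrm{c-ind}_{I(1)}^G(1) \cong \bigoplus_{\chi \in \widehat{H}} \textrm{c-ind}_I^G(\chi),$$
and applying Lemma \ref{decomp} to $\pi = \textrm{c-ind}_{I(1)}^G(1)$ identifies $e_{\gamma_\chi}$ with the projection onto the sub-direct-sum running over the $W$-orbit of $\chi$.

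The key step is to verify that $\textrm{Hom}_G\bigl(\textrm{c-ind}_I^G(\chi'), \textrm{c-ind}_I^G(\chi)\bigr) = 0$ whenever $\chi'$ and $\chi$ lie in different $W$-orbits. By Frobenius reciprocity followed by a Mackey decomposition along the Bruhat double cosets of $I\backslash G/I$ (indexed by $W_{\textrm{aff}}$), this Hom space decomposes as a sum over $w \in W_{\textrm{aff}}$ of spaces of the form $\textrm{Hom}_{I \cap \dot{w} I \dot{w}^{-1}}\bigl(\chi'|_\cdot,\, {}^{\dot w}\chi|_\cdot\bigr)$. Since $T_0$ is normalized by $N$ and contained in $I$, it lies in every such intersection, so restriction to $T_0$ (and passage to the quotient $H = T_0/T_1$) forces $\chi'$ and $\chi \circ \textrm{Ad}(\dot w^{-1})$ to agree on $H$. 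Because $T$ is abelian, this conjugation action factors through the finite Weyl group $W = N/T$, yielding $\chi' \in \{\chi, \chi^s\}$, as desired.

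Granting both steps, the decomposition is then immediate:
$$\hh_C(G,I(1)) = \textrm{End}_G\biggl(\bigoplus_\chi \textrm{c-ind}_I^G(\chi)\biggr) = \bigoplus_{\gamma_\chi} \textrm{End}_G\bigl(\textrm{c-ind}_I^G(\gamma_\chi)\bigr) = \bigoplus_{\gamma_\chi} \hh_C(G,\gamma_\chi),$$
and the second isomorphism in the statement follows from the identification of $e_{\gamma_\chi}$ as the projection onto the $\gamma_\chi$-summand. The main obstacle is the Mackey/Bruhat vanishing step — in particular, verifying that restriction to $T_0$ is tight enough to detect the $W$-orbit equivalence class of the relevant characters — while the remainder is formal idempotent manipulation.
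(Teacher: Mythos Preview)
Your argument is correct and is essentially the same approach as the one the paper invokes: the paper cites \cite{Vig04}, Proposition 3.1, whose proof is precisely the decomposition $\textrm{c-ind}_{I(1)}^G(1)\cong\bigoplus_\chi\textrm{c-ind}_I^G(\chi)$ followed by the Bruhat/Mackey vanishing of cross-Hom spaces between distinct $W$-orbits. You have simply written out in full what the citation contains, including the key observation that $T_0\subset I\cap\dot wI\dot w^{-1}$ for every $\dot w\in N$ and that conjugation on $T_0$ factors through $W$.
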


\begin{proof}
 Assumption \ref{ass} guarantees that the regular representation of $I/I(1)$ is semisimple.  Using this fact, the proof is nearly identical to that in \cite{Vig04}, Proposition 3.1.  
\end{proof}

\begin{thm}\label{propstr}\hfill
 \begin{enumerate}[(i)]
  \item The algebra $\hh_C(G,I(1))$ is generated by the elements $\T_{n_s}, \T_{n_{s'}}$ and $e_{\chi}, \chi\in\widehat{H}$, subject to the following relations:
  \begin{enumerate}[(a)]
  \item $$\T_{n_s}e_\chi = e_{\chi^s}\T_{n_s},\qquad \T_{n_{s'}}e_\chi = e_{\chi^s}\T_{n_{s'}},$$
$$e_{\chi}e_{\chi'} = \begin{cases}e_\chi & \textnormal{if}~ \chi' = \chi,\\ 0 & \textnormal{if}~ \chi'\neq \chi.\end{cases}$$
  \item If $\chi$ is of trivial type, then 
  $$\T_{n_s}^2e_\chi = (q^3 - 1)\T_{n_s}e_\chi + q^3e_\chi,\qquad \T_{n_{s'}}^2e_\chi = (q - 1)\T_{n_{s'}}e_\chi + qe_\chi.$$
 If $\chi$ is hybrid, then 
 $$\T_{n_s}^2e_\chi = (q - q^2)\T_{n_s}e_\chi + q^3e_\chi,\qquad \T_{n_{s'}}^2e_\chi = (q - 1)\T_{n_{s'}}e_\chi + qe_\chi.$$
  If $\chi$ is regular, then 
  $$\T_{n_s}^2e_\chi = \zeta(-1)q^3e_\chi,\qquad \T_{n_{s'}}^2e_\chi = \zeta(-1)qe_\chi.$$
  \end{enumerate}
  \item The center $\mathcal{Z}$ of $\hh_C(G,I(1))$ is generated by the idempotents $e_{\gamma_\chi}$, and the elements
 $\begin{cases} (\T_{n_s}(\T_{n_{s'}} - (q - 1)) + \T_{n_{s'}}(\T_{n_s} - (q^3 - 1)) + 1)e_\chi & \textit{for}~ \chi~\textit{of trivial type},\\
  (\T_{n_s}(\T_{n_{s'}} - (q - 1)) + \T_{n_{s'}}(\T_{n_s} - (q - q^2)))e_\chi & \textit{for}~ \chi~\textit{hybrid},\\
  (\T_{n_{s'}}\T_{n_s}e_\chi + \T_{n_s}\T_{n_{s'}}e_{\chi^s})~ \textit{and} & \\
  \qquad (\T_{n_{s'}}\T_{n_s}e_{\chi^s} + \T_{n_s}\T_{n_{s'}}e_\chi) & \textit{for}~ \chi~\textit{regular}. \end{cases}$
 \end{enumerate}
\end{thm}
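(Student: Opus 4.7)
\emph{Plan.} The strategy is to prove the three parts in sequence, using the block decomposition of Proposition \ref{orbits} so that each block $\hh_C(G,\gamma_\chi)$ can be treated separately within parts (ii) and (iii). Part (i) is a short direct computation: for any $h \in T_0$, Lemma \ref{norm} together with length-additivity on basis products involving $T_0$ gives $\T_{n_s} \T_h = \T_{n_s h} = \T_{h^s n_s} = \T_{h^s} \T_{n_s}$, where $h^s := n_s h n_s^{-1}$; substituting into $e_\chi$ and reindexing produces $\T_{n_s} e_\chi = e_{\chi^s} \T_{n_s}$, and the same argument handles $n_{s'}$. The orthogonality $e_\chi e_{\chi'} = \delta_{\chi,\chi'} e_\chi$ is immediate from the orthogonality of characters of the abelian group $H$ (valid under Assumption \ref{ass}).

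For part (ii), I work inside a fixed block. The Iwahori-Matsumoto relation $I n_s I \cdot I n_s I = I \sqcup I n_s I$ (valid because $s^2 = e$ in $W_{\textrm{aff}}$) refines, at the $I(1)$-level, to
\[
\T_{n_s}^2 = \sum_{h \in H} a_h \T_h + \sum_{h \in H} b_h \T_{n_s h},
\]
with explicit coefficients $a_h, b_h \in C$ determined by a direct coset-counting argument. Since $\T_h e_\chi = \chi(h)^{-1} e_\chi$ and $\T_{n_s h} e_\chi = \chi(h)^{-1} \T_{n_s} e_\chi$ (both consequences of part (i)), the quadratic relation collapses to character sums
\[
\T_{n_s}^2 e_\chi = \Bigl( \sum_h a_h \chi(h)^{-1} \Bigr) e_\chi + \Bigl( \sum_h b_h \chi(h)^{-1} \Bigr) \T_{n_s} e_\chi.
\]
The three cases in the statement correspond to how these sums evaluate: for trivial-Iwahori-type $\chi$ both sums attain their maximum and yield the standard Iwahori-Matsumoto parameters with $q^3 = q^{\ell(n_s)}$; for hybrid $\chi$ partial cancellation gives the modified linear coefficient $q - q^2$; and for regular $\chi$ the linear sum vanishes outright, leaving only $\zeta(-1) q^3$, where $\zeta(-1)$ tracks the value $\chi(n_s^2) = \chi(\textnormal{diag}(-1,1,-1))$. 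The identical method applied to $\T_{n_{s'}}$ replaces $q^3$ by $q$, reflecting $\ell(n_{s'}) = 1$.

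For part (iii), part (i) immediately shows that each $e_{\gamma_\chi}$ is central, so $\mathcal{Z} = \bigoplus_{\gamma_\chi} \mathcal{Z}(\hh_C(G,\gamma_\chi))$ and one works block by block. In each block, parts (i) and (ii) furnish a finite $C$-basis and a complete multiplication table, reducing the center computation to finitely many linear equations that express commutation of a general element with the generators $\T_{n_s} e_{\gamma_\chi}$ and $\T_{n_{s'}} e_{\gamma_\chi}$. In the trivial-Iwahori-type and hybrid blocks the nontrivial part of the solution space is one-dimensional and spanned by the stated quadratic Bernstein-type element; in the regular block the symmetry $\chi \leftrightarrow \chi^s$ produces the two listed central elements, each obtained by symmetrizing $\T_{n_{s'}} \T_{n_s}$ and $\T_{n_s} \T_{n_{s'}}$ across the pair $e_\chi, e_{\chi^s}$.

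The main technical obstacle is the coefficient computation underlying part (ii): determining the $a_h, b_h$ requires an explicit decomposition of $I(1) n_s I(1)$ (and separately of $I(1) n_{s'} I(1)$) into left $I(1)$-cosets, together with a careful counting of how the cosets multiply, using the specific form of $n_s, n_{s'}$ and of $I(1)$ in $\textnormal{U}(2,1)(E/F)$. The asymmetry $\ell(n_s) = 3 \neq 1 = \ell(n_{s'})$ forces the two quadratic computations to proceed independently, and the sign $\zeta(-1)$ in the regular case must be carefully tracked back to $n_s^2 = n_{s'}^2 = \textnormal{diag}(-1,1,-1) \in T_0$. Once these coefficients are in hand, parts (i) and (iii) reduce to bookkeeping.
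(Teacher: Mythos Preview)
Your approach to parts (i) and (ii) is essentially the paper's approach: the paper also defers to block-by-block convolution computations (carried out in Propositions~\ref{triv}, \ref{str}, \ref{mod}, \ref{mod2}) using the explicit coset decompositions of $I n_s I$ and $I n_{s'} I$, and your reformulation via the expansion $\T_{n_s}^2 = \sum_h a_h \T_h + \sum_h b_h \T_{n_s h}$ followed by evaluation of the resulting character sums is exactly equivalent to the paper's direct computation of $\varphi_{n_s} * \varphi_{n_s}$ at $1$ and at $n_s$.

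There is, however, a genuine gap in your plan for part (iii). You assert that ``parts (i) and (ii) furnish a finite $C$-basis and a complete multiplication table, reducing the center computation to finitely many linear equations,'' and that ``the nontrivial part of the solution space is one-dimensional.'' Neither statement is correct: each block $\hh_C(G,\gamma_\chi)$ is infinite-dimensional over $C$ (in the trivial and hybrid cases it is isomorphic to $C\langle X,Y\rangle$ modulo two quadratic relations, with basis given by all alternating words in $X,Y$), and its center is the polynomial ring $C[Z]$, not a one-dimensional space. A general central element has arbitrarily high degree, so commutation with the generators cannot be encoded by a single finite linear system. What the paper does instead (see the end of the proof of Proposition~\ref{str}) is a degree-reduction argument: one checks directly that the exhibited element $Z$ is central, observes from the quadratic relations that any central element must have even degree with equal leading coefficients on the two top-degree words, and then subtracts a suitable power $cZ^k$ to strictly lower the degree; induction then shows $\mathcal{Z}_\chi = C[Z]$. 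For the regular block the paper uses the twisted matrix description of Corollary~\ref{twistmatalg} and argues by multiplying a putative central element on both sides by the ``diagonal'' generators $\mT_{\alpha^{\pm 1},\chi}$. Your plan becomes correct if you replace the finite-linear-algebra step with such a filtration argument, or alternatively if you first establish that each block is free of finite rank over $C[Z]$ (rank four in the self-conjugate cases, with basis $\{1,\mT_{n_s},\mT_{n_{s'}},\mT_{n_s}\mT_{n_{s'}}\}$) and then solve the resulting $C[Z]$-linear system; but as written, the sketch for (iii) does not go through.
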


\begin{proof}
Let $\mathcal{M}$ be the subalgebra of $\hh_C(G,I(1))$ generated by $\T_{n_s}, \T_{n_{s'}}$ and the operators $e_\chi$ for every $\chi\in \widehat{H}$.  By Proposition \ref{orbits}, $\mathcal{M}e_{\gamma_\chi}$ is a subalgebra of $\hh_C(G,\gamma_\chi)$.  Propositions \ref{triv}, \ref{str}, \ref{mod}, and \ref{mod2} now show that the elements $\T_{n_s}e_\chi, \T_{n_{s'}}e_\chi, \T_{n_s}e_{\chi^s}$ and $\T_{n_{s'}}e_{\chi^s}$ of $\mathcal{M}e_{\gamma_\chi}$ generate $\hh_C(G,\gamma_\chi)$, and therefore $\mathcal{M} = \hh_C(G,I(1))$.  Furthermore, these propositions and Corollary \ref{twistmatalg} give the quadratic relations and the structure of the center.  
\end{proof}

\begin{remark}
 Let $h_s:E^\times \rightarrow T$ be the homomorphism defined by 
 $$h_s(y) = \begin{pmatrix}y & 0 & 0\\ 0 & \ol{y}y^{-1} & 0 \\ 0 & 0 & \ol{y}^{-1}\end{pmatrix},$$ and set $$\tau_s := (q + 1)\sum_{y\in \mathbb{F}_{q^2}^\times}\T_{h_s(y)} - q\sum_{y\in\mathbb{F}_q^\times}\T_{h_s(y)},\qquad \tau_{s'} := \sum_{y\in\mathbb{F}_q^\times} \T_{h_s(y)}.$$  
 Using Fourier inversion and the theorem above, the quadratic relations take the form
\begin{eqnarray*}
 \T_{n_s}^2 & = & \T_{n_s}\tau_s + q^3\T_{h_s(-1)}\\
 \T_{n_{s'}}^2 & = & \T_{n_{s'}}\tau_{s'} + q\T_{h_s(-1)}.
\end{eqnarray*}
Moreover, we see that the center $\mathcal{Z}$ of $\hh_C(G,I(1))$ is generated by the central idempotents $e_{\gamma_\chi}$ and the elements 
$$\T_{n_{s'}}\T_{n_s}\vartheta_1 + \T_{n_s}\T_{n_{s'}}\vartheta_2 - \T_{n_s}\tau_{s'} - \T_{n_{s'}}\tau_s + (q-1)\tau_s,$$
$$\T_{n_{s'}}\T_{n_s}\vartheta_2 + \T_{n_s}\T_{n_{s'}}\vartheta_1 - \T_{n_s}\tau_{s'} - \T_{n_{s'}}\tau_s + (q-1)\tau_s.$$
Here $$\vartheta_1 := \sum_{\chi^s = \chi}e_\chi + 2\sum_{\sub{\chi^s\neq \chi}{\chi\in \{\chi,\chi^s\}}}e_\chi,\quad \vartheta_2 := \sum_{\chi^s = \chi}e_\chi + 2\sum_{\sub{\chi^s\neq \chi}{\chi^s\in \{\chi,\chi^s\}}}e_{\chi^s},$$
where the sums are taken over $W$-orbits of $C$-characters, such that $\vartheta_1 + \vartheta_2 = 2\T_1$.  
\end{remark}

In light of Theorem \ref{propstr}, we make the following definition:
\begin{defn}\label{ssingdef1}
 Assume $\ch(C) = p$, and let $M$ be a nonzero simple right $\hh_C(G,I(1))$-module which admits a central character.  We say $M$ is \emph{supersingular} if every generator of the center $\mathcal{Z}$ (as given in Theorem \ref{propstr}) which is not a central idempotent $e_{\gamma_\chi}$, acts by 0.  
\end{defn}

In the subsequent sections, we describe the structures of the Hecke algebras $\hh_C(G,\gamma_\chi)$.  From the descriptions of these blocks, we obtain Theorem \ref{propstr}, and identify the supersingular modules of $\hh_C(G,I(1))$ when $\ch(C) = p$.

\subsection{The Trivial Case}

We first assume that $\chi$ is of trivial type, meaning $\chi$ factors through the determinant and $\chi = \eta\circ\det$, for $\eta$ a character of $\textnormal{U}(1)(\mathbb{F}_{q^2}/\mathbb{F}_q)$ (which we also view as a character of $\textnormal{U}(1)(E/F)$).  

Let $\mathbf{1}_I\in \textrm{c-ind}_{I}^G(\chi)$ denote the function with support in $I$, taking the value 1 at the identity.  We let $\mT_{n_s}$ (resp. $\mT_{n_{s'}}$) denote the endomorphism of $\textrm{c-ind}_{I}^G(\chi)$ sending $\mathbf{1}_I$ to the function with support $In_sI$ (resp. $In_{s'}I$), taking the value 1 at $n_s$ (resp. $n_{s'}$), on which $I$ acts by $\chi$.  In the notation of the previous section, we have $$\mT_{n_s} = \T_{n_s}e_\chi,\qquad \mT_{n_{s'}} = \T_{n_{s'}}e_\chi.$$  

We now arrive at the following result on the structure of $\hh_C(G,\chi)$:
\begin{prop}\label{triv}
 The algebra $\hh_C(G,\chi)$ is a noncommutative algebra, generated by $\mT_{n_s}$ and $\mT_{n_{s'}}$, subject to the relations
\begin{eqnarray*}
 (\mT_{n_s} + 1)(\mT_{n_s} - q^3) & = & 0\\
 (\mT_{n_{s'}} + 1)(\mT_{n_{s'}} - q) & = & 0.
\end{eqnarray*}
The center $\mathcal{Z}_\chi$ is generated by $Z = \mT_{n_s}(\mT_{n_{s'}} - (q - 1)) + \mT_{n_{s'}}(\mT_{n_s} - (q^3 - 1)) + 1$.  We have an isomorphism of algebras $$\hh_C(G,\chi)\cong C\langle X,Y\rangle/(X^2 +(1-q^3)X - q^3, Y^2 + (1-q)Y - q),$$ sending $\mT_{n_s}$ to $X$ and $\mT_{n_{s'}}$ to $Y$.  Here $C\langle X,Y \rangle$ denotes the noncommutative polynomial algebra in two variables over $C$.  
\end{prop}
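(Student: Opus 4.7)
The plan is to reduce the problem to a standard Iwahori--Hecke algebra computation, then verify generation, the quadratic relations, the description of the center, and finally the presentation, in that order.

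First, I would exploit the hypothesis $\chi = \eta\circ\det$. Since $\eta\circ\det$ extends to a character of $G$, the natural isomorphism $\textnormal{c-ind}_I^G(\chi) \cong (\eta\circ\det) \otimes \textnormal{c-ind}_I^G(1)$ (tensoring pointwise) yields an isomorphism of $G$-representations, and hence an isomorphism of $C$-algebras $\hh_C(G,\chi) \cong \hh_C(G,1)$ that matches $\mT_{n_s}$ with the analogous operator for the trivial character. This reduces everything to studying the usual Iwahori--Hecke algebra $\hh_C(G,I)$. By the Bruhat decomposition $G = \bigsqcup_{w\in W_{\textrm{aff}}}IwI$, this algebra has a basis $\{T_w\}_{w\in W_{\textrm{aff}}}$.

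Next, I would establish the quadratic relations and generation simultaneously. Using Lemma \ref{act}, I compute $\mT_{n_s}^2$ acting on the characteristic function of $I$: the double coset $In_sI\cdot In_sI$ decomposes into $I$ and $In_sI$ (since $n_s^2 \in T_0 \subseteq I$ and $n_s$ is the long simple reflection), so $\mT_{n_s}^2 = a\,\mT_{n_s} + b\cdot\textnormal{id}$ for scalars $a,b$ determined by counting cosets. Using $[In_sI:I] = q^{\ell(n_s)} = q^3$, one obtains $a = q^3 - 1$ and $b = q^3$, which rearranges to the stated factorization. The same computation for $n_{s'}$ uses $[In_{s'}I:I] = q^{\ell(n_{s'})} = q$. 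For generation, the standard braid identity $T_{ww'} = T_w T_{w'}$ whenever $\ell(ww') = \ell(w) + \ell(w')$ (a formal consequence of the BN-pair structure) then expresses every $T_w$ inductively in length as a monomial in $\mT_{n_s}, \mT_{n_{s'}}$, because $W_{\textrm{aff}}$ is the infinite dihedral group $\langle s,s'\rangle$.

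For the center, I would first check by direct calculation, using only the two quadratic relations, that $Z$ commutes with $\mT_{n_s}$ and with $\mT_{n_{s'}}$. Writing $Z = \mT_{n_s}\mT_{n_{s'}} + \mT_{n_{s'}}\mT_{n_s} - (q-1)\mT_{n_s} - (q^3-1)\mT_{n_{s'}} + 1$, one verifies for instance $Z\mT_{n_s} - \mT_{n_s}Z = 0$ by expanding $\mT_{n_s}^2\mT_{n_{s'}}$ and $\mT_{n_{s'}}\mT_{n_s}^2$ via the quadratic relation for $\mT_{n_s}$; the cross terms cancel after also using the quadratic relation for $\mT_{n_{s'}}$. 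To show that $Z$ \emph{generates} the center, I would argue that the element $\Theta := \mT_{n_s}\mT_{n_{s'}} + \mT_{n_{s'}}\mT_{n_s}$ (a shift of $Z$ by elements of lower complexity) plays the role of the Bernstein--Lusztig central element for the affine Hecke algebra of rank one: the monomials $\Theta^n, \Theta^n\mT_{n_s}, \Theta^n\mT_{n_{s'}}, \Theta^n\mT_{n_s}\mT_{n_{s'}}$ (for $n\geq 0$) together span $\hh_C(G,\chi)$ because they account for all reduced words in $W_{\textrm{aff}}$, and only the $\Theta^n$ themselves survive the commutation test with both $\mT_{n_s}$ and $\mT_{n_{s'}}$.

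Finally, for the presentation, let $\mathcal{A} := C\langle X,Y\rangle/(X^2+(1-q^3)X - q^3,\ Y^2+(1-q)Y - q)$. The quadratic relations just verified give a surjective $C$-algebra homomorphism $\varphi: \mathcal{A} \twoheadrightarrow \hh_C(G,\chi)$ sending $X\mapsto \mT_{n_s}, Y\mapsto \mT_{n_{s'}}$. To see $\varphi$ is injective, I would produce a spanning set for $\mathcal{A}$ indexed by reduced words in the infinite dihedral group---namely the alternating products $XYXY\cdots$ and $YXYX\cdots$, together with $1$---and observe that $\varphi$ sends these to the Iwahori basis elements $T_w$ (up to lower-length corrections) via the braid identity. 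The main obstacle I anticipate is the center statement: one must rule out unexpected central elements at every length, which requires either the Bernstein--Lusztig-style bookkeeping sketched above or an explicit matrix computation on a faithful module. Everything else is a fairly mechanical consequence of the Bruhat decomposition and the quadratic relations.
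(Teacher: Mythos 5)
Your outline for the generation, quadratic relations, and presentation parallels what the paper does (the paper proves Propositions 3.7 and 3.14 simultaneously by computing convolutions $\varphi_{n_s}*\varphi_{n_s}$ and $\varphi_{n_{s'}}*\varphi_{n_{s'}}$ on explicit $I$-coset decompositions; invoking the general BN-pair/Iwahori--Matsumoto machinery after reducing to $\hh_C(G,1)$ is a legitimate shortcut, and the paper itself records the isomorphism $\hh_C(G,\chi)\cong\hh_C(G,1)$ just before the statement). That part is fine.

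The center argument, however, has a genuine flaw. You take $\Theta = \mT_{n_s}\mT_{n_{s'}}+\mT_{n_{s'}}\mT_{n_s}$ and claim that among the spanning monomials $\Theta^n,\Theta^n\mT_{n_s},\Theta^n\mT_{n_{s'}},\Theta^n\mT_{n_s}\mT_{n_{s'}}$, ``only the $\Theta^n$ survive the commutation test.'' But $\Theta$ is \emph{not} central: since $Z = \Theta - (q-1)\mT_{n_s} - (q^3-1)\mT_{n_{s'}} + 1$ is central, one has $[\Theta,\mT_{n_s}] = (q^3-1)[\mT_{n_{s'}},\mT_{n_s}]\ne 0$. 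Moreover $Z$ itself, expressed in your $\Theta$-basis, has nonzero coefficients on $\mT_{n_s}$, $\mT_{n_{s'}}$, and $1$, so it is a counterexample to the claim that central elements lie in the span of the $\Theta^n$ alone. The fix is to run the same filtration with $Z^n$ in place of $\Theta^n$ (the spanning property still holds since $Z$ and $\Theta$ have the same leading term $\mT_{(n_sn_{s'})}+\mT_{(n_{s'}n_s)}$), or to argue as the paper does: show that the top degree of any central element $\mathcal{Y}$ must be even, that the two top-degree coefficients (on $T_{(ss')^k}$ and $T_{(s's)^k}$) must agree, and then subtract an appropriate multiple of $Z^k$ to lower the degree by induction.
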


\begin{remark}
 Note that using the length function on $W_{\textrm{aff}}$, the Hecke relations take the simple form $(\mT_n + 1)(\mT_n - q^{\ell(n)}) = 0$, where $n = n_s$ or $n_{s'}$.
\end{remark}

\begin{proof}
 See the proof of Proposition \ref{str} below.  
\end{proof}

Given this result, we can quickly classify the finite-dimensional simple right $\hh_C(G,\chi)$-modules.  

\begin{defn}
\begin{enumerate}[(i)]
 \item Let $(\theta,\theta')\in \{-1,q^3\}\times\{-1,q\}$.  We define the characters $\mu_{\theta,\theta'}:\hh_C(G,\chi)\rightarrow C$ by 
$$\mT_{n_s} \mapsto \theta,\quad \mT_{n_{s'}} \mapsto \theta'.$$ 
The central element $Z$ maps to $\theta(\theta' - q + 1) + \theta'(\theta - q^3 + 1) + 1\in\{q^3 + q + 1,-q^4\}$.  

 \item Let $\langle v_1, v_2\rangle_C$ be a two-dimensional vector space over $C$, and let $\lambda\in C$.  We define $M(\lambda)$ to be the following right $\hh_C(G,\chi)$-module:
\begin{center}
\begin{tabular}{cclccl}
 $v_1\cdot \mT_{n_s}$ & = & $-v_1,$ & $v_1\cdot \mT_{n_{s'}}$ & = & $v_2$\\
 $v_2\cdot \mT_{n_s}$ & = & $(\lambda - q)v_1 + q^3 v_2,$ & $v_2\cdot \mT_{n_{s'}}$ & = & $qv_1 + (q-1)v_2$
\end{tabular}
\end{center}
The central element $Z$ acts by $\lambda$.  
\end{enumerate}
\end{defn}

One may check directly that the action of $\hh_C(G,\chi)$ on $M(\lambda)$ is well-defined.  This fact will also be made clear in the proof of Theorem \ref{thmiw}.

\begin{prop}\label{iwahorired}
 Assume $q^3 + 1\neq 0$ in $C$ (note that $q + 1\neq 0$ by Assumption \ref{ass}).  Then the module $M(\lambda)$ is reducible if and only if $\lambda = q^3 + q + 1$ or $\lambda = -q^4$.  In these cases, we have the following nonsplit exact sequences:
$$0\rightarrow \mu_{q^3,q}\rightarrow M(q^3 + q + 1)\rightarrow \mu_{-1,-1}\rightarrow 0$$
$$0\rightarrow \mu_{q^3,-1}\rightarrow M(-q^4)\rightarrow \mu_{-1.q}\rightarrow 0$$
\end{prop}

\begin{proof}
 Assume that $M(\lambda)$ is reducible, so that we have some character $\mu_{\theta,\theta'}\subset M(\lambda)$.  By examining the action of the center, we conclude that $\lambda\in\{q^3 + q + 1, -q^4\}$.  Assume $\lambda = q^3 + q + 1$; the other case is similar.  One then checks directly that
$$\langle v_1 + v_2\rangle_C\cong\mu_{q^3,q}\subset M(q^3 + q + 1)\quad \textnormal{and}\quad M(q^3 + q + 1)/\mu_{q^3,q}\cong \mu_{-1,-1}.$$  
The assumption $q^3 + 1 \neq 0$ guarantees that the surjection $M(q^3 + q + 1) \twoheadrightarrow \mu_{-1,-1}$ cannot split.
\end{proof}

\begin{prop}\label{exc}
 Assume $q^3 + 1 = 0$ in $C$.  Then the module $M(\lambda)$ is reducible if and only if $\lambda = q$.  In this case the module decomposes as $M(q) \cong \mu_{-1,q}\oplus\mu_{-1,-1}$.  
\end{prop}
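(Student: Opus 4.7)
The plan is to exploit the collapse $(\T_{n_s}+1)(\T_{n_s}-q^3) = (\T_{n_s}+1)^2$ which occurs precisely when $q^3+1 = 0$. Under this hypothesis, the operator $\T_{n_s}$ on $M(\lambda)$ has the single eigenvalue $-1$, with matrix (in the basis $v_1, v_2$, with row vectors acted on from the right)
$$\T_{n_s} \;\longleftrightarrow\; \begin{pmatrix} -1 & 0 \\ \lambda - q & -1 \end{pmatrix},$$
so $\T_{n_s}+\textnormal{id}$ annihilates $v_1$ and sends $v_2$ to $(\lambda-q)v_1$. Thus its $(-1)$-eigenspace is all of $M(\lambda)$ when $\lambda=q$ and is exactly $\langle v_1\rangle_C$ when $\lambda\neq q$.

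For the forward (irreducibility) direction, suppose $\lambda \neq q$. Any one-dimensional submodule is spanned by a common eigenvector of $\T_{n_s}$ and $\T_{n_{s'}}$. The computation above forces such a vector to be a scalar multiple of $v_1$, but $v_1\cdot \T_{n_{s'}} = v_2\notin \langle v_1\rangle_C$, contradicting $\T_{n_{s'}}$-stability. Hence $M(\lambda)$ is irreducible.

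For the converse, set $\lambda=q$, so $\T_{n_s}$ acts as $-\textnormal{id}_{M(q)}$ and every vector is automatically a $\T_{n_s}$-eigenvector with eigenvalue $-1$. The decomposition of $M(q)$ is then governed entirely by $\T_{n_{s'}}$, which satisfies $(\T_{n_{s'}}+1)(\T_{n_{s'}}-q)=0$. I will invoke Assumption \ref{ass}: since $|H| = (q-1)(q+1)^2$ is coprime to $\ch(C)$, we have $q+1\neq 0$ in $C$, so the eigenvalues $-1$ and $q$ of $\T_{n_{s'}}$ are distinct and $\T_{n_{s'}}$ is diagonalizable. A direct calculation shows that $v_1+v_2$ is a $q$-eigenvector and $-qv_1+v_2$ is a $(-1)$-eigenvector for $\T_{n_{s'}}$; since $q+1\neq 0$ these two vectors are linearly independent, yielding
$$M(q) \;=\; \langle v_1+v_2\rangle_C \,\oplus\, \langle -qv_1+v_2\rangle_C \;\cong\; \mu_{-1,q}\oplus \mu_{-1,-1}.$$

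There is no substantial obstacle; the only point requiring a little care is the appeal to Assumption \ref{ass} to ensure $q+1\neq 0$ in $C$ (note that $q^3+1 = (q+1)(q^2-q+1)$ vanishes in $C$ via the second factor), which is what guarantees both the diagonalizability of $\T_{n_{s'}}$ and the non-isomorphism of the two character summands.
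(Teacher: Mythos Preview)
Your proof is correct. The paper takes a slightly different route for the ``only if'' direction: rather than analyzing the $(-1)$-eigenspace of $\T_{n_s}$ directly, it observes that any one-dimensional submodule must be one of the characters $\mu_{-1,-1}$ or $\mu_{-1,q}$ (the only possibilities when $q^3=-1$), computes that the central element $Z = \T_{n_s}(\T_{n_{s'}} - (q-1)) + \T_{n_{s'}}(\T_{n_s} + 2) + 1$ acts by $q$ on each, and concludes $\lambda=q$. Your eigenspace argument is more hands-on and avoids computing $Z$ on the characters; the paper's central-character argument mirrors the pattern used in the neighboring propositions. For the decomposition when $\lambda=q$, the two arguments are identical, and your explicit invocation of Assumption~\ref{ass} to guarantee $q+1\neq 0$ in $C$ (so that the two eigenvectors $v_1+v_2$ and $-qv_1+v_2$ are linearly independent and the characters $\mu_{-1,q}$, $\mu_{-1,-1}$ are distinct) makes explicit a point the paper leaves implicit.
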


\begin{proof}
Assume that $M(\lambda)$ is reducible, so that it contains either $\mu_{-1,-1}$ or $\mu_{-1,q}$.  By examining the action of the center, we conclude that $\lambda = q$.  One checks directly that 
$$M(q) \cong \langle v_1 + v_2\rangle_C \oplus \langle -qv_1 + v_2\rangle_C \cong \mu_{-1,q}\oplus\mu_{-1,-1}$$
(Assumption \ref{ass} guarantees that the sum is direct).  
\end{proof}

We now imitate the proof of Theorem 1.2 in \cite{Vig04} to classify simple right $\hh_C(G,\chi)$-modules.

\begin{thm}
\label{thmiw}
 Every finite-dimensional simple right $\hh_C(G,\chi)$-module is isomorphic to either a character $\mu_{\theta,\theta'}, (\theta,\theta')\in\{-1,q^3\}\times\{-1,q\}$, or a module of the form $M(\lambda), \lambda\neq q^3 + q + 1, -q^4$.  
\end{thm}

\begin{proof}
 Assume $M$ is a nonzero simple right module which is not a character, and assume that $Z$ acts by $\lambda$.  We claim that the space $\ker(\mT_{n_s} + 1)$ is a nontrivial proper subspace of $M$.  Indeed, if $\ker(\mT_{n_s} + 1) = \{0\}$ or $M$, the element $\mT_{n_s}$ would act by a scalar, and any nonzero eigenvector for $\mT_{n_{s'}}$ would generate a one-dimensional submodule.  This gives a contradiction, since $M$ was assumed simple of dimension greater than 1.

The element $\mT_{n_{s'}}(\mT_{n_s} - q^3)$ maps $\ker(\mT_{n_s} + 1)$ into itself, and therefore has an eigenvector $v$ in $\ker(\mT_{n_s} + 1)$.  We have 
\begin{eqnarray*}
 v\cdot\mT_{n_{s'}}(\mT_{n_s} - q^3) & = & v\cdot(Z - (\mT_{n_s} + 1)\mT_{n_{s'}} + (q - 1)\mT_{n_s} - 1)\\
 & = & \lambda v + (q - 1)v\cdot\mT_{n_s} - v\\
 & = & (\lambda - q)v.
\end{eqnarray*}
Consider now the subspace $V := \langle v\rangle_C + \langle v\cdot\mT_{n_{s'}}\rangle_C$.  The quadratic relations and the computation above show that $V$ is stable under $\hh_C(G,\chi)$, and therefore must be all of $M$ by simplicity.  Moreover, since $M$ was assumed to be of dimension greater than one, we have $v\cdot\mT_{n_{s'}}\neq 0$, and the sum $\langle v\rangle_C + \langle v\cdot\mT_{n_{s'}}\rangle_C$ is direct.  Writing out the actions of $\mT_{n_s}$ and $\mT_{n_{s'}}$ on the basis $\{v, v\cdot\mT_{n_{s'}}\}$ shows that $M\cong M(\lambda)$.  We again use simplicity of $M$ to deduce that $\lambda\neq q^3 + q + 1, -q^4$.
\end{proof}

\subsection{The Hybrid Case}

We now assume that $\chi^s = \chi = \zeta\otimes\eta$, but that $\chi$ does not factor through the determinant.  This condition implies that the character $\zeta$ is nontrivial.  In addition, we have $\zeta(a) = \zeta(\ol{a}^{-1})$; since the map $a\mapsto a^{q+1}$ maps $\mathbb{F}_{q^2}^\times$ onto $\mathbb{F}_q^\times$, this implies $\zeta$ is trivial on $\mathbb{F}_q^\times$.  

As before, we let $\mathbf{1}_I\in\textrm{c-ind}_I^G(\chi)$ denote the function with support in $I$, taking the value 1 at the identity.  We let $\mT_{n_s}$ (resp. $\mT_{n_{s'}}$) denote the endomorphism of $\textrm{c-ind}_{I}^G(\chi)$ sending $\mathbf{1}_I$ to the function with support $In_sI$ (resp. $In_{s'}I$), taking the value 1 at $n_s$ (resp. $n_{s'}$), on which $I$ acts by $\chi$.  In the notation of Section \ref{decompalg}, we have $$\mT_{n_s} = \T_{n_s}e_\chi,\qquad \mT_{n_{s'}} = \T_{n_{s'}}e_\chi.$$

\begin{prop}
\label{str}
 The algebra $\hh_C(G,\chi)$ is a noncommutative algebra, generated by $\mT_{n_s}$ and $\mT_{n_{s'}}$, subject to the relations
\begin{eqnarray*}
 (\mT_{n_s} + q^2)(\mT_{n_s} - q) & = & 0\\
 (\mT_{n_{s'}} + 1)(\mT_{n_{s'}} - q) & = & 0.
\end{eqnarray*}
The center $\mathcal{Z}_\chi$ is generated by $Z = \mT_{n_s}(\mT_{n_{s'}} - (q - 1)) + \mT_{n_{s'}}(\mT_{n_s} - (q - q^2))$.  We have an isomorphism of algebras $$\hh_C(G,\chi)\cong C\langle X,Y\rangle/(X^2 + (q^2 - q)X - q^3,Y^2 + (1-q)Y - q),$$ sending $\mT_{n_s}$ to $X$ and $\mT_{n_{s'}}$ to $Y$.  Here $C\langle X,Y \rangle$ denotes the noncommutative polynomial algebra in two variables over $C$.  
\end{prop}

\begin{proof}
 We shall prove Propositions \ref{triv} and \ref{str} simultaneously, assuming only that $\chi^s = \chi$ (recall that this implies that $\zeta$ is trivial on $\mathbb{F}_q^\times$).  By Frobenius Reciprocity, we may view elements of $\hh_C(G,\chi)$ as functions $\varphi:G\rightarrow C$ satisfying $$\varphi(igi') = \chi(i)\varphi(g)\chi(i')$$ for $g\in G, i,i'\in I$ (cf. \cite{BL94}, Proposition 5(1)).  If $\mT_{\varphi_1}, \mT_{\varphi_2}$ are the endomorphisms associated to $\varphi_1,\varphi_2$, respectively, then the composition product on $\hh_C(G,\chi)$ gives $\mT_{\varphi_1}\mT_{\varphi_2} = \mT_{\varphi_1*\varphi_2},$ where $$\varphi_1*\varphi_2(g) = \sum_{h\in G/I}\varphi_1(h)\varphi_2(h^{-1}g)$$
(see \emph{loc. cit.}, Proposition 5(3)).

Assume that $\varphi$ has support in $IwI$, where $w\in W_{\textrm{aff}}$.  Let $w = s_1s_2\cdots s_k$ be a reduced word expression for $w$, where $s_i\in \{s,s'\}$, and let $\varphi_{n_s}$ (resp. $\varphi_{n_{s'}}$) be the function with support in $In_sI$ (resp. $In_{s'}I$) taking the value 1 at $n_s$ (resp. $n_{s'}$).  We claim that $\varphi$ is a scalar multiple of $\varphi_{n_{s_1}}*\varphi_{n_{s_2}}*\ldots *\varphi_{n_{s_k}}$.  Indeed, the definition of the convolution product shows that $\textrm{supp}(\varphi_1*\varphi_2)\subset \textrm{supp}(\varphi_1)\textrm{supp}(\varphi_2)$, and therefore
\begin{eqnarray*}
\textrm{supp}(\varphi_{n_{s_1}}*\varphi_{n_{s_2}}*\ldots *\varphi_{n_{s_k}}) & \subset & \textrm{supp}(\varphi_{n_{s_1}})\textrm{supp}(\varphi_{n_{s_2}})\cdots\textrm{supp}(\varphi_{n_{s_k}})\\
 & = & In_{s_1}In_{s_2}I\cdots In_{s_k}I\\
 & = & IwI,
\end{eqnarray*}
where the last equality follows from \cite{AB08}, Prop. 6.36(4).  An elementary inductive argument shows that $\varphi_{n_{s_1}}*\varphi_{n_{s_2}}*\ldots *\varphi_{n_{s_k}}\neq 0$, which implies that $\hh_C(G,\chi)$ is generated as an algebra by $\mT_{n_s}$ and $\mT_{n_{s'}}$.

It suffices to determine the relations among $\mT_{n_s}$ and $\mT_{n_{s'}}$.  Note first that $\mT_{n_s}$ (resp. $\mT_{n_{s'}}$) preserves the space $\textnormal{c-ind}_{I}^{K}(\chi)\subset\textnormal{c-ind}_{I}^{G}(\chi)$ (resp. $\textnormal{c-ind}_{I}^{K'}(\chi)\subset\textnormal{c-ind}_{I}^{G}(\chi)$).  Additionally, we have the following identity, valid for $y\neq 0$:
\begin{equation}\label{inv}
u^-(x,y)  =  u\left(-\ol{x}\ \ol{y}^{-1},y^{-1}\right)h_s(-\ol{y}^{-1}\sqrt{\epsilon})n_su(-\ol{x}y^{-1},y^{-1}),
\end{equation}
where $h_s:E^\times\rightarrow T$ is the homomorphism of the remark following Theorem \ref{propstr}.  Using this notation, Proposition 3.18 of \cite{CL76} implies
\begin{align*}
 \mT_{n_s}^2 & = |\mathbb{U}| + \left(\sum_{u(x,y)\in \mathbb{U}\smallsetminus \{1\}}\chi^{-1}(h_s(\ol{y}^{-1}\sqrt{\epsilon}^{-1}))\right)\mT_{n_s}\\
 & = q^3 + \left(\sum_{\sub{x,y\in \mathbb{F}_{q^2},y\neq 0}{x\ol{x} + y + \ol{y} = 0}}\zeta(\ol{y}\sqrt{\epsilon})\right)\mT_{n_s}\\
 & = q^3 + \left(\sum_{y\in\mathbb{F}_{q}^\times}\zeta(y) + (q + 1)\left(\sum_{y\in\mathbb{F}_{q^2}^\times}\zeta(y) - \sum_{y\in \mathbb{F}_{q}^\times}\zeta(y)\right)\right)\mT_{n_s}\\
 & = \begin{cases}q^3 + (q^3 - 1)\mT_{n_s} & \textnormal{if}~\zeta~\textnormal{is trivial,}\\ q^3 + (q - q^2)\mT_{n_s} & \textnormal{if}~\zeta~\textnormal{is nontrivial.}\end{cases}\\
 \mT_{n_{s'}}^2 & = |\mathbb{U}'| + \left(\sum_{u^-(0,\varpi y)\in \mathbb{U}'\smallsetminus \{1\}}\chi^{-1}(h_s(y\sqrt{\epsilon}^{-1}))\right)\mT_{n_s}\\
 & = q + \left(\sum_{y\in\mathbb{F}_{q}^\times}\zeta^{-1}(y)\right)\mT_{n_{s'}}\\
 & = q + (q - 1)\mT_{n_{s'}}.
\end{align*}
This shows that any element of $\hh_C(G,\chi)$ may be written as a linear combination of monomials in $\mT_{n_s}$ and $\mT_{n_{s'}}$, with alternating terms.  Given two distinct such monomials, the two functions in $\textnormal{c-ind}_I^G(\chi)$ obtained by applying these operators to $\mathbf{1}_I$ will have disjoint support (this follows from the fact that $(I,N)$ is a BN pair).  Therefore, the quadratic relations are the only relations satisfied by $\mT_{n_s}$ and $\mT_{n_{s'}}$, and we obtain the isomorphism of $\hh_C(G,\chi)$ with a quotient of a noncommutative polynomial algebra.

Now, it is an elementary computation to check that $Z\in \mathcal{Z}_\chi$. To verify the claim about the centers $\mathcal{Z}_\chi$ of the algebras $\hh_C(G,\chi)$ in general, we first note that any central element, when viewed as a polynomial in $\mT_{n_s}$ and $\mT_{n_{s'}}$, must be of even degree.  Moreover, the quadratic relations imply that the coefficients of the two highest even-degree terms must be equal.   Let $\mathcal{Y}\in \mathcal{Z}_\chi$ be of degree $2k$.  Then there exists some $c\in C$ such that $\mathcal{Y} - cZ^k$ is of strictly smaller degree.  By induction on degree, $\mathcal{Y} - cZ^k$ is a polynomial in $Z$, and therefore $\mathcal{Z}_\chi = C[Z]$.  
\end{proof}

As before, we can now classify the finite-dimensional simple right $\hh_C(G,\chi)$-modules.  

\begin{defn}
\begin{enumerate}[(i)]
\item Let $(\theta,\theta')\in \{-q^2, q\}\times\{-1,q\}$.  We define the characters $\mu_{\theta,\theta'}:\hh_C(G,\chi)\rightarrow C$ by $$\mT_{n_s} \mapsto \theta,\quad \mT_{n_{s'}} \mapsto \theta'.$$
The central element $Z$ maps to $\theta(\theta' - q + 1) + \theta'(\theta - q + q^2)\in \{q^3 + q,-2q^2\}$.  
\item Let $\langle v_1, v_2\rangle_C$ be a two-dimensional vector space over $C$, and let $\lambda\in C$.  We define $M(\lambda)$ to be the following right $\hh_C(G,\chi)$-module:
\begin{center}
\begin{tabular}{cclccl}
 $v_1\cdot \mT_{n_s}$ & = & $-q^2v_1,$ & $v_1\cdot \mT_{n_{s'}}$ & = & $v_2$\\
 $v_2\cdot \mT_{n_s}$ & = & $(\lambda + q^2 - q^3)v_1 + q v_2,$ & $v_2\cdot \mT_{n_{s'}}$ & = & $qv_1 + (q-1)v_2$
\end{tabular}
\end{center}
The central element $Z$ acts by $\lambda$.  
\end{enumerate}
\end{defn}

Again, the action of $\hh_C(G,\chi)$ on $M(\lambda)$ is well-defined.  As the proofs of the following results are similar to their counterparts in the trivial case (cf. Propositions \ref{iwahorired} and \ref{exc}, and Theorem \ref{thmiw}), we omit them.  

\begin{prop}
 Assume $\ch(C) \neq p$. Then $M(\lambda)$ is reducible if and only if $\lambda = q^3 + q$ or $\lambda = -2q^2$.  In these cases, we have the following nonsplit exact sequences:
$$0\rightarrow \mu_{q,  q}\rightarrow M(q^3 + q)\rightarrow \mu_{-q^2, -1}\rightarrow 0$$
$$0\rightarrow \mu_{q, -1}\rightarrow M(-2q^2)\rightarrow \mu_{-q^2,q}\rightarrow 0$$
\end{prop}

%
%
%

\begin{prop}
 Assume $\ch(C) = p$.  Then $M(\lambda)$ is reducible if and only if $\lambda = 0$.  In this case the module decomposes as $M(0) \cong \mu_{0,0}\oplus\mu_{0,-1}$.  
\end{prop}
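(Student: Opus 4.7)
The plan is to specialize the formulas defining $M(\lambda)$ to $\ch(C) = p$, where $q = 0$ in $C$, and to locate all character submodules by a direct search for common eigenvectors of $\mT_{n_s}$ and $\mT_{n_{s'}}$.

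In characteristic $p$ the action on $M(\lambda)$ collapses to
\begin{align*}
 v_1\cdot\mT_{n_s} &= 0, & v_1\cdot\mT_{n_{s'}} &= v_2,\\
 v_2\cdot\mT_{n_s} &= \lambda v_1, & v_2\cdot\mT_{n_{s'}} &= -v_2.
\end{align*}
For the ``only if'' direction: any reducible two-dimensional $\hh_C(G,\chi)$-module must contain a character submodule, and the list of possible characters $\mu_{\theta,\theta'}$ with $(\theta,\theta') \in \{-q^2,q\}\times\{-1,q\}$ collapses to $\mu_{0,0}$ and $\mu_{0,-1}$ in characteristic $p$. On either of these, the central element $Z = \mT_{n_s}(\mT_{n_{s'}} - (q-1)) + \mT_{n_{s'}}(\mT_{n_s} - (q - q^2))$ acts by $0$ (since $\theta = 0$ in both), whereas on $M(\lambda)$ it acts by $\lambda$, forcing $\lambda = 0$.

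Conversely, for $\lambda = 0$ I would verify the stated decomposition directly. The vectors $v_2$ and $v_1 + v_2$ form a basis of $M(0)$, and the simplified formulas give
\[
 v_2\cdot\mT_{n_s} = 0,\quad v_2\cdot\mT_{n_{s'}} = -v_2,\qquad (v_1+v_2)\cdot\mT_{n_s} = 0,\quad (v_1+v_2)\cdot\mT_{n_{s'}} = 0,
\]
so $\langle v_2\rangle_C \cong \mu_{0,-1}$ and $\langle v_1+v_2\rangle_C \cong \mu_{0,0}$. These characters are nonisomorphic (distinguished by their $\mT_{n_{s'}}$-eigenvalue, since $-1 \neq 0$ in $C$ as $p$ is odd), so the two one-dimensional submodules intersect trivially and $M(0) \cong \mu_{0,0}\oplus\mu_{0,-1}$.

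No substantial obstacle appears here: after specializing $q = 0$ the whole argument is a short computation. The only point worth emphasizing is that the nonsplit extension obtained in characteristic $\neq p$ splits in this setting precisely because the two composition factors, whose $\mT_{n_s}$-eigenvalues both degenerate to $0$, remain distinguishable via their $\mT_{n_{s'}}$-eigenvalues $0$ and $-1$.
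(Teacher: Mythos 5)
Your proof is correct and follows essentially the same route as the paper's: use the action of the central element $Z$ on the two characters $\mu_{0,0}$, $\mu_{0,-1}$ (which are the only characters once $q=0$) to force $\lambda=0$, and then exhibit $\langle v_1+v_2\rangle_C\cong\mu_{0,0}$ and $\langle v_2\rangle_C\cong\mu_{0,-1}$ directly. Your remark that the two factors are distinguished by their $\mT_{n_{s'}}$-eigenvalues (so the sum is direct) is a useful explicit touch the paper leaves implicit.
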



\begin{thm}\label{thmiw2}
 Every finite-dimensional simple right $\hh_C(G,\chi)$-module is isomorphic to either a character $\mu_{\theta,\theta'}, (\theta,\theta')\in\{-q^2, q\}\times\{-1,q\}$, or a module of the form $M(\lambda), \lambda\neq q^3 + q, -2q^2$.
\end{thm}


\subsection{The Regular Case}

We assume now that $\chi^s \neq \chi = \zeta\otimes\eta$.  In this case we have nontrivial intertwining maps between $\textrm{c-ind}_I^G(\chi)$ and $\textrm{c-ind}_I^G(\chi^s)$, and we are led to consider the algebra 
\begin{eqnarray*}
\hh_C(G,\gamma_\chi) & = & \hh_C(G,\chi\oplus\chi^s)\\
 & = & \hh_C(G,\chi)\oplus \textrm{Hom}_G(\textrm{c-ind}_I^G(\chi), \textrm{c-ind}_I^G(\chi^s))\\
 & & \qquad\oplus\ \textrm{Hom}_G(\textrm{c-ind}_I^G(\chi^s), \textrm{c-ind}_I^G(\chi))\oplus\hh_C(G,\chi^s).
\end{eqnarray*}
We first determine the algebra $\hh_C(G,\chi)$.  For $n\in N$, we denote by $\mathbf{1}_{InI}\in \textrm{c-ind}_I^G(\chi)$ the function with support $InI$, taking the value 1 at $n$, on which $I$ acts by $\chi$ or $\chi^s$ (depending on the class of $n$ in $W$).  We let $\mT_{\alpha^{-1}}$ (resp. $\mT_\alpha$) denote the endomorphism of $\textrm{c-ind}_I^G(\chi)$ sending $\mathbf{1}_I$ to $\mathbf{1}_{I\alpha^{-1}I}$ (resp. $\mathbf{1}_{I\alpha I}$).

\begin{prop}
\label{thmreg}
 The algebra $\hh_C(G,\chi)$ is commutative, generated by $\mT_{\alpha^{-1}}$ and $\mT_{\alpha}$, with the relations 
$$\mT_{\alpha^{-1}}\mT_\alpha = \mT_\alpha\mT_{\alpha^{-1}} = q^4.$$
We have an isomorphism of algebras $\hh_C(G,\chi)\cong C[X,Y]/(XY - q^4)$, sending $\mT_{\alpha^{-1}}$ to $X$ and $\mT_\alpha$ to $Y$.
\end{prop}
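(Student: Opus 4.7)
The plan is to extract a basis of $\hh_C(G,\chi)$ from a support analysis, identify the basis elements as integer powers of $\mT_\alpha$ and $\mT_{\alpha^{-1}}$ via length additivity, and verify the single nontrivial relation $\mT_\alpha\mT_{\alpha^{-1}} = q^4$ by a direct convolution.

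By Frobenius reciprocity, elements of $\hh_C(G,\chi)$ correspond to compactly supported functions $\varphi: G \to C$ satisfying $\varphi(igi') = \chi(i)\varphi(g)\chi(i')$ for $i, i' \in I$. The covariance condition forces $\varphi$ to vanish on $IwI$ unless $\chi^w = \chi$; since $\chi^s \neq \chi$ by assumption, and both $s$ and $s'$ represent the nontrivial element of $W = W_{\textrm{aff}}/\langle\alpha\rangle$, the $W_{\textrm{aff}}$-stabilizer of $\chi$ is exactly $\langle\alpha\rangle$. This yields a basis $\{\varphi_n\}_{n \in \mathbb{Z}}$ with $\varphi_n$ supported on $I\alpha^n I$ and $\varphi_n(\alpha^n) = 1$. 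Since $\ell(\alpha^n) = n\ell(\alpha) = 4n$ for $n > 0$, with reduced expression $(s's)^n$, the convolution argument from the proof of Proposition \ref{str} (invoking \cite{AB08} Prop.~6.36(4)) gives $\mT_\alpha^n = \varphi_n$ for $n \geq 0$, and similarly $\mT_{\alpha^{-1}}^n = \varphi_{-n}$. Thus $\mT_\alpha$ and $\mT_{\alpha^{-1}}$ generate the algebra.

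The crux is verifying $\mT_\alpha\mT_{\alpha^{-1}} = q^4$. Exploiting the dominance of $\alpha$ (namely $\alpha U^-_I \alpha^{-1} \subsetneq U^-_I$), the Iwahori decomposition parametrizes $I\alpha I = \bigsqcup_{u^- \in U^-_I/\alpha U^-_I \alpha^{-1}} u^-\alpha I$ as a disjoint union of $q^4 = q^{\ell(\alpha)}$ cosets. Evaluating the convolution at the identity yields
\[
(\varphi_\alpha * \varphi_{\alpha^{-1}})(1) = \sum_{u^-} \varphi_\alpha(u^-\alpha)\,\varphi_{\alpha^{-1}}(\alpha^{-1}(u^-)^{-1}) = q^4,
\]
since each $u^- \in I(1) \subset \ker\chi$ contributes $1$. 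The main obstacle is showing this convolution vanishes on $I\alpha^m I$ for every $m \neq 0$: this reduces to verifying that $(u^-)^{-1}\alpha^m \notin \alpha I\alpha^{-1}I$ for any coset representative $u^-$ and any nonzero $m$, which I would deduce from the Iwahori factorization of $\alpha I \alpha^{-1}$ together with the opposite dominance $\alpha^{-1}U^+_I\alpha \subsetneq U^+_I$.

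A symmetric calculation gives $\mT_{\alpha^{-1}}\mT_\alpha = q^4$, establishing commutativity. The algebra map $C\langle X, Y\rangle \to \hh_C(G,\chi)$ given by $X \mapsto \mT_\alpha$ and $Y \mapsto \mT_{\alpha^{-1}}$ factors through $C[X, Y]/(XY - q^4)$; comparing the bases $\{X^n\}_{n \geq 0} \cup \{Y^n\}_{n \geq 1}$ of the quotient ring with $\{\varphi_n\}_{n \in \mathbb{Z}}$ shows this factored map is an isomorphism.
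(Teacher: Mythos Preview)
Your argument is essentially the paper's: extract the basis $\{\varphi_n\}_{n\in\mathbb Z}$ via the $\chi$-covariance condition (which kills all cosets $IwI$ with $w$ representing the nontrivial class in $W$), recover this basis as powers of $\mT_\alpha$ and $\mT_{\alpha^{-1}}$ using length additivity and \cite{AB08}, and then verify the single relation by computing a convolution.  The paper computes $\varphi_{\alpha^{-1}}*\varphi_\alpha$ using the upper-unipotent decomposition $I\alpha^{-1}I=\bigsqcup u(x,y)\alpha^{-1}I$; you compute $\varphi_\alpha*\varphi_{\alpha^{-1}}$ via a lower-unipotent decomposition.  These are symmetric and both give $q^4$ at the identity.

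The one place you work harder than necessary is the vanishing on $I\alpha^mI$ for $m\neq 0$.  You propose to show directly, via Iwahori factorization of $\alpha I\alpha^{-1}$, that $(u^-)^{-1}\alpha^m\notin\alpha I\alpha^{-1}I$ for each representative $u^-$.  The paper bypasses this entirely: the convolution lies in $\hh_C(G,\chi)$, so its support is already constrained to cosets $I\alpha^mI$; on the other hand the BN-pair rules give $I\alpha^{-1}I\alpha I\subset I\sqcup In_sI\sqcup I\alpha^{-1}n_sI$ (and analogously $I\alpha I\alpha^{-1}I\subset I\sqcup In_{s'}I\sqcup I n_{s'}n_sn_{s'}I$ in your order), whose only overlap with $\{I\alpha^mI\}$ is $I$ itself.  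This disposes of your ``main obstacle'' in one line, without any factorization gymnastics.  Your sketched route would ultimately succeed, but it is more laborious and you leave it incomplete; the paper's support-intersection argument is both shorter and already implicit in the machinery you invoked for the generation step.
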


\begin{proof}
 We adopt the same method as in the proof of \ref{str}, viewing elements of $\hh_C(G,\chi)$ as functions $\varphi$ on the double cosets $I\backslash G/I$.  In this case, however, the relation $\varphi(igi') = \chi(i)\varphi(g)\chi(i')$ shows that the functions $\varphi$ associated to elements of $\hh_C(G,\chi)$ are supported only on cosets of the form $I\alpha^nI$, $n\in \mathbb{Z}$.  If $\varphi$ has support in $I\alpha^{-n}I$ (resp. $I\alpha^nI$) with $n>0$, then $\varphi$ is a scalar multiple of $\varphi_{\alpha^{-1}}*\varphi_{\alpha^{-1}}*\ldots *\varphi_{\alpha^{-1}}$ (resp. $\varphi_\alpha *\varphi_\alpha *\ldots *\varphi_\alpha$), the convolution taken $n$ times.
  
It therefore suffices to compute the products $\varphi_{\alpha^{-1}}*\varphi_\alpha$ and $\varphi_\alpha *\varphi_{\alpha^{-1}}$.  We compute the first of these; the method of computation for the second is the same.  We have $\textrm{supp}(\varphi_{\alpha^{-1}}*\varphi_\alpha)\subset I\alpha^{-1}I\alpha I \subset I\sqcup In_sI\sqcup I\alpha^{-1}n_sI$, and since the convolution must have support on cosets of the form $I\alpha^nI$, we actually have $\textrm{supp}(\varphi_{\alpha^{-1}}*\varphi_\alpha)\subset I$.  Hence, we need only evaluate this function at 1.  We have:
\begin{equation*}
 \varphi_{\alpha^{-1}}*\varphi_\alpha(1)  =  \sum_{h\in I\alpha^{-1}I/I}\varphi_{\alpha^{-1}}(h)\varphi_\alpha(h^{-1}) = 
 \sum_{h\in I\alpha^{1}I/I} 1 =  q^{\ell(\alpha^{-1})} = q^4.
\end{equation*}\end{proof}

We now turn our attention to $\textrm{Hom}_G(\textrm{c-ind}_I^G(\chi), \textrm{c-ind}_I^G(\chi^s))$.  This has the structure of an $(\hh_C(G,\chi^s),\hh_C(G,\chi))$-bimodule, with the action given by post-composition and pre-composition, respectively.  By Frobenius Reciprocity we have 
\begin{center}
$\textrm{Hom}_G(\textrm{c-ind}_I^G(\chi), \textrm{c-ind}_I^G(\chi^s)) \cong \textrm{Hom}_I(\chi,\textrm{c-ind}_I^G(\chi^s)|_I)\cong \textrm{c-ind}_I^G(\chi^s)^{I,\chi}$,
\end{center}
 which has a basis consisting of the functions $\mathbf{1}_{In_s\alpha^nI}$ with support $In_s\alpha^nI$ and value $1$ at $n_s\alpha^n$, on which $I$ acts by $\chi$.  We let $\mS_{n,\chi}$ denote the homomorphism sending $\mathbf{1}_I\in \textrm{c-ind}_I^G(\chi)$ to $\mathbf{1}_{In_s\alpha^nI}\in \textrm{c-ind}_I^G(\chi^s)$, and append a $\chi$ (or $\chi^s$) to the parameters for the operator $\mT_{\alpha}$ (or $\mT_{\alpha^{-1}}$) to specify the Hecke algebra to which it corresponds.  In the notation of Section \ref{decompalg}, we have 
$$\mS_{0,\chi} = \T_{n_s}e_\chi,\qquad \mS_{0,\chi^s} = \T_{n_s}e_{\chi^s},$$
$$\mS_{-1,\chi} = \T_{n_{s'}}e_\chi,\qquad \mS_{-1,\chi^s} = \T_{n_{s'}}e_{\chi^s}.$$

We note that the set 
\begin{equation}\label{basis}
\{\textnormal{id}_{\chi},~ \mT_{\alpha,\chi}^m,~ \mT_{\alpha^{-1},\chi}^m,~ \textnormal{id}_{\chi^s},~ \mT_{\alpha,\chi^s}^m,~ \mT_{\alpha^{-1},\chi^s}^m,~ \mS_{n,\chi},~ \mS_{n,\chi^s}\}_{m > 0, n\in \mathbb{Z}}
\end{equation}
 forms a basis for $\hh_C(G,\gamma_\chi)$, where $\textnormal{id}_{\chi}$ (resp. $\textnormal{id}_{\chi^s}$) denotes the identity element of $\hh_C(G,\chi)$ (resp. $\hh_C(G,\chi^s)$).

\begin{prop}\label{mod}
 We have the following relations for the $(\hh_C(G,\chi^s),\hh_C(G,\chi))$-bimodule $\textnormal{Hom}_G(\textnormal{c-ind}_I^G(\chi), \textnormal{c-ind}_I^G(\chi^s))$:
\begin{eqnarray*}
 \mT_{\alpha^{-1},\chi^s}\mS_{n,\chi} = \mS_{n,\chi}\mT_{\alpha,\chi} & = & \begin{cases}\mS_{n+1,\chi} & n\geq 0\\ q\mS_{n+1,\chi} & n = -1\\ q^4\mS_{n+1,\chi} & n\leq -2\end{cases}\\
 \mT_{\alpha,\chi^s}\mS_{n,\chi} = \mS_{n,\chi}\mT_{\alpha^{-1},\chi} & = & \begin{cases}q^4\mS_{n-1,\chi} & n\geq 1\\ q^3\mS_{n-1,\chi} & n = 0\\ \mS_{n-1,\chi} & n\leq -1   \end{cases}
\end{eqnarray*}
In particular, $\textnormal{Hom}_G(\textnormal{c-ind}_I^G(\chi), \textnormal{c-ind}_I^G(\chi^s))$ is generated as a module by $\mS_{0,\chi}$ and $\mS_{-1,\chi}$.  
\end{prop}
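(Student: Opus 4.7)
The plan is to work on the function-theoretic side, as in the proof of Proposition \ref{str}. Identify $\mS_{n,\chi}$ with the function $\psi_n \in \textnormal{c-ind}_I^G(\chi^s)^{I,\chi}$ supported on $In_s\alpha^n I$ and taking value $1$ at $n_s\alpha^n$, and similarly realize $\mT_{\alpha^{\pm 1},\chi}$ and $\mT_{\alpha^{\pm 1},\chi^s}$ as compactly supported functions on $I\alpha^{\pm 1}I$ with the appropriate $(\chi,\chi)$ and $(\chi^s,\chi^s)$ biinvariance. Composition of operators then corresponds to the convolution $(\varphi_1*\varphi_2)(g)=\sum_{h\in G/I}\varphi_1(h)\varphi_2(h^{-1}g)$.

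To determine each product, I would first identify its support. The biinvariance of type $(\chi^s,\chi)$, combined with the hypothesis $\chi\neq\chi^s$, forces the support to lie in double cosets of the form $In_s\alpha^m I$ (never $I\alpha^m I$, since the latter would require $\chi=\chi^s$); together with $\textnormal{supp}(\varphi_1*\varphi_2)\subset\textnormal{supp}(\varphi_1)\cdot\textnormal{supp}(\varphi_2)$, this pins down the support to the single double coset $In_s\alpha^{n\pm 1}I$ coming from the image of $\alpha^{\pm 1}\cdot n_s\alpha^n$ in $W_{\textnormal{aff}}$. Using the weighted length $\ell(n_s\alpha^n)=4n+3$ for $n\geq 0$ and $4|n|-3$ for $n\leq -1$, together with $\ell(\alpha^{\pm 1})=4$, the product $\alpha^{-1}\cdot n_s\alpha^n$ is reduced exactly for $n\geq 0$ and $\alpha\cdot n_s\alpha^n$ is reduced exactly for $n\leq -1$. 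In the non-reduced cases the length drops by $2$ at $n=-1$, by $6$ at $n=0$, or by $8$ in the stable range, matching the scalars $q$, $q^3$, $q^4$ through the pattern $q^{\textnormal{loss}/2}$ familiar from the quadratic relations in Propositions \ref{triv} and \ref{str}.

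In the reduced cases, I would exhibit $h=\alpha^{\pm 1}$ as the single coset representative contributing to $(\varphi_{\alpha^{\pm 1},\chi^s}*\psi_n)(n_s\alpha^{n\pm 1})$, using the matrix identity $\alpha n_s\alpha=n_s$ (which holds in $G$, not just in $W_{\textnormal{aff}}$) to verify that this term equals $1$ and that no other representative contributes. In the non-reduced cases I would apply the Iwahori decomposition
\begin{displaymath}
I\alpha^{-1}I=\bigsqcup_{\sub{x\in\mathbb{F}_{q^2},\,y\in\mathfrak{o}_E/\mathfrak{p}_E^2}{x\ol{x}+y+\ol{y}=0}}u(x,y)\alpha^{-1}I
\end{displaymath}
(from the proof of Proposition \ref{thmreg}) and its analog for $I\alpha I$, evaluate the convolution at the representative of the target double coset, and count the pairs $(x,y)$ producing nonzero contributions. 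The key tool is identity \eqref{inv} from the proof of Proposition \ref{str}, which converts a lower unipotent element into a product in $Un_sTU$ and thereby allows $(u(x,y)\alpha^{\pm 1})^{-1}\cdot n_s\alpha^{n\pm 1}$ to be rewritten in a form whose Bruhat cell is immediate. The right-action identities $\mS_n\mT_{\alpha^{\mp 1},\chi}=\mT_{\alpha^{\pm 1},\chi^s}\mS_n$ follow from the analogous convolution computation (now with $\varphi_{\alpha^{\mp 1},\chi}$ on the right of $\psi_n$), and the final generation claim is immediate by recursion from the reduced cases.

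The main obstacle will be the combinatorial bookkeeping in the non-reduced cases: identifying exactly which pairs $(x,y)$ contribute after applying identity \eqref{inv}, and tracking the $\chi$-factors through the resulting finite sums over $\mathbb{F}_q$ and $\mathbb{F}_{q^2}$. The assumption $\chi\neq\chi^s$ plays an essential role in ensuring that spurious contributions landing in double cosets $I\alpha^mI$ vanish automatically, leaving only the expected geometric count of $q^{\textnormal{loss}/2}$ surviving terms in the target double coset.
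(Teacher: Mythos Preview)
Your proposal is correct and follows essentially the same approach as the paper: both identify the support of each product as a single double coset $In_s\alpha^{n\pm 1}I$ (using that $\chi\neq\chi^s$ rules out cosets of the form $I\alpha^mI$), then evaluate the convolution at the representative using the explicit Iwahori coset decompositions and identity \eqref{inv}. The paper organizes the computation slightly differently---it works with $\mS_{n,\chi}\mT_{\alpha^{\pm 1},\chi}(\mathbf{1}_I)$ directly rather than through convolution of functions, and it treats all cases uniformly via the sum $\sum_{x,y}\mathbf{1}_{In_s\alpha^nI}(n_s\alpha^{n\pm 1}u(-x,\ol{y})\alpha^{\mp 1})$ without first separating reduced from non-reduced cases---but the underlying computation is the same. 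Your length-function heuristic $q^{\ell\text{-loss}/2}$ is a helpful organizing principle (and does correctly predict every coefficient here), though the paper simply reads off $1,q,q^3,q^4$ from the count of contributing pairs $(x,y)$ after applying \eqref{inv}.
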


\begin{proof}
We will need the following coset decompositions:
\begin{center}
\begin{align}
I\alpha I & =  \bigsqcup_{\sub{x\in \mathbb{F}_{q^2},y\in \mathfrak{o}_E/\mathfrak{p}_E^2}{x\ol{x} + y + \ol{y} = 0}}I\alpha u(x,y)\label{firsteq}\\
I\alpha^{-1}I & =  \bigsqcup_{\sub{x\in \mathbb{F}_{q^2},y\in \mathfrak{o}_E/\mathfrak{p}_E^2}{\varpi x\ol{x} + y + \ol{y} = 0}}I\alpha^{-1}u^-(\varpi x,\varpi y)\\
In_s\alpha^nI & =  \bigsqcup_{\sub{x\in \mathfrak{o}_E/\mathfrak{p}_E^{n+1},y\in \mathfrak{o}_E/\mathfrak{p}_E^{2n+1}}{x\ol{x} + y + \ol{y} = 0}}In_s\alpha^nu(x,y)  & \textrm{if}\ n\geq 0 \label{decs}\\
In_s\alpha^nI & =  \bigsqcup_{\sub{x\in \mathfrak{o}_E/\mathfrak{p}_E^{-n-1},y\in \mathfrak{o}_E/\mathfrak{p}_E^{-2n-1}}{\varpi x\ol{x} + y + \ol{y} = 0}}In_s\alpha^nu^-(\varpi x,\varpi y) & \textrm{if}\ n < 0\label{decs'}
\end{align}
\end{center}
In order to compute $\mS_{n,\chi}\mT_{\alpha,\chi}$, it suffices to know its action on the function $\mathbf{1}_I\in \textrm{c-ind}_I^G(\chi)$.  The definitions of $\mS_{n,\chi}$ and $\mT_{\alpha,\chi}$ show that the image will have support contained in $In_s\alpha^nI\alpha I$.  Using Proposition 6.36 and Exercise 6.37 of \cite{AB08}, we see that this product of double cosets is equal to $In_s\alpha^{n+1}I$ (if $\ell(n_s\alpha^{n+1}) = \ell(n_s\alpha^n) + \ell(\alpha)$), or is contained in $In_s\alpha^{n+1}I \sqcup I\alpha^{-n}I \sqcup I\alpha^{-n-1}I$ (if $\ell(n_s\alpha^{n+1}) \neq \ell(n_s\alpha^n) + \ell(\alpha)$).  Since the support of $\mS_{n,\chi}\mT_{\alpha,\chi}(\mathbf{1}_I)$ must be of the form $In_s\alpha^mI$, we see that in both cases the support is contained in $In_s\alpha^{n+1}I$, and therefore it suffices to evaluate the function at $n_s\alpha^{n+1}$.  This gives (using equation \eqref{firsteq})
\begin{eqnarray*}
\mS_{n,\chi}\mT_{\alpha,\chi}(\mathbf{1}_I)(n_s\alpha^{n+1}) & = & \mS_{n,\chi}\left(\sum_{\sub{x\in \mathbb{F}_{q^2},y\in \mathfrak{o}_E/\mathfrak{p}_E^2}{x\ol{x} + y + \ol{y} = 0}} u(-x,\ol{y})\alpha^{-1}.\mathbf{1}_I\right)(n_s\alpha^{n+1})\\
 & = & \sum_{\sub{x\in \mathbb{F}_{q^2},y\in \mathfrak{o}_E/\mathfrak{p}_E^2}{x\ol{x} + y + \ol{y} = 0}}u(-x,\ol{y})\alpha^{-1}.\mathbf{1}_{In_s\alpha^nI}(n_s\alpha^{n+1})\\
 & = & \sum_{\sub{x\in \mathbb{F}_{q^2},y\in \mathfrak{o}_E/\mathfrak{p}_E^2}{x\ol{x} + y + \ol{y} = 0}}\mathbf{1}_{In_s\alpha^nI}(n_s\alpha^{n+1}u(-x,\ol{y})\alpha^{-1})\\
 & \stackrel{\textnormal{eq.}~ \eqref{inv}}{=} & \begin{cases}1 & n\geq 0\\q & n = -1\\ q^4 & n\leq -2.\end{cases}
\end{eqnarray*}
The other relations are proved similarly, using the various coset decompositions above.  
\end{proof}

\begin{cor}
 As a right $\hh_C(G,\chi)$-module, we have 
$$\textnormal{Hom}_G(\textnormal{c-ind}_I^G(\chi), \textnormal{c-ind}_I^G(\chi^s)) \cong (\hh_C(G,\chi)\oplus\hh_C(G,\chi))/((\mT_{\alpha^{-1},\chi},-q^3),(-q,\mT_{\alpha,\chi})),$$
the isomorphism sending $\mS_{0,\chi}$ to $(1,0)$ and $\mS_{-1,\chi}$ to $(0,1)$.  

Likewise, as a left $\hh_C(G,\chi^s)$-module, we have
$$\textnormal{Hom}_G(\textnormal{c-ind}_I^G(\chi), \textnormal{c-ind}_I^G(\chi^s)) \cong (\hh_C(G,\chi^s)\oplus\hh_C(G,\chi^s))/((\mT_{\alpha,\chi^s},-q^3),(-q,\mT_{\alpha^{-1},\chi^s})),$$
the isomorphism sending $\mS_{0,\chi}$ to $(1,0)$ and $\mS_{-1,\chi}$ to $(0,1)$.  
\end{cor}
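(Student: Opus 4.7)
The plan is to prove the two isomorphisms in parallel, reducing each to a generators-and-relations calculation that is essentially a repackaging of Proposition \ref{mod}. I describe the right $\hh_C(G,\chi)$-module statement first; the left $\hh_C(G,\chi^s)$-module case is symmetric.

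First I would extract from Proposition \ref{mod} the two key identities obtained at $n=0$ and $n=-1$, namely
$$\mS_{0,\chi}\mT_{\alpha^{-1},\chi} = q^3\mS_{-1,\chi}, \qquad \mS_{-1,\chi}\mT_{\alpha,\chi} = q\mS_{0,\chi}.$$
Rewritten, these become $\mS_{0,\chi}\cdot \mT_{\alpha^{-1},\chi} + \mS_{-1,\chi}\cdot(-q^3) = 0$ and $\mS_{0,\chi}\cdot(-q) + \mS_{-1,\chi}\cdot \mT_{\alpha,\chi}=0$, which are precisely the images of the two relators $(\mT_{\alpha^{-1},\chi},-q^3)$ and $(-q,\mT_{\alpha,\chi})$ under the right $\hh_C(G,\chi)$-linear map $\Phi\colon \hh_C(G,\chi)\oplus \hh_C(G,\chi)\to \textnormal{Hom}_G(\textnormal{c-ind}_I^G(\chi),\textnormal{c-ind}_I^G(\chi^s))$ defined by $(1,0)\mapsto \mS_{0,\chi}$ and $(0,1)\mapsto \mS_{-1,\chi}$. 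Hence $\Phi$ factors through the quotient by the right submodule generated by these two elements.

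Next I would establish surjectivity and injectivity of the induced map $\ol{\Phi}$. Surjectivity is immediate: Proposition \ref{mod} shows that the target is generated by $\mS_{0,\chi}$ and $\mS_{-1,\chi}$ as a right $\hh_C(G,\chi)$-module, in fact giving the explicit formulas $\mS_{n,\chi} = \mS_{0,\chi}\mT_{\alpha,\chi}^n$ for $n\geq 0$ and $\mS_{n,\chi}=\mS_{-1,\chi}\mT_{\alpha^{-1},\chi}^{-n-1}$ for $n\leq -1$. For injectivity I would use the basis $\{1,\mT_{\alpha,\chi}^m, \mT_{\alpha^{-1},\chi}^m\}_{m\geq 1}$ of $\hh_C(G,\chi)$ from Proposition \ref{thmreg} and the relations to produce the spanning set $\{(\mT_{\alpha,\chi}^m,0)\}_{m\geq 0}\cup\{(0,\mT_{\alpha^{-1},\chi}^m)\}_{m\geq 0}$ of the quotient: every element of the form $(\mT_{\alpha^{-1},\chi}^m,0)$ can be rewritten as $q^3(0,\mT_{\alpha^{-1},\chi}^{m-1})$ and every $(0,\mT_{\alpha,\chi}^m)$ as $q(\mT_{\alpha,\chi}^{m-1},0)$ using the relators (here I silently use $\mT_{\alpha,\chi}\mT_{\alpha^{-1},\chi}=q^4$). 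This spanning set is mapped bijectively by $\ol{\Phi}$ onto the $C$-basis $\{\mS_{n,\chi}\}_{n\in\mathbb{Z}}$ of the target.

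For the left $\hh_C(G,\chi^s)$-module statement I would run the same argument using the corresponding formulas $\mT_{\alpha,\chi^s}\mS_{0,\chi}=q^3\mS_{-1,\chi}$ and $\mT_{\alpha^{-1},\chi^s}\mS_{-1,\chi}=q\mS_{0,\chi}$ from Proposition \ref{mod}; here the placement of $\alpha$ and $\alpha^{-1}$ swaps, matching the statement. The only minor bookkeeping obstacle is tracking the fact that $q^3$ and $q$ appear asymmetrically (because $\ell(n_s)=3$ and $\ell(n_{s'})=1$), so some care is required to identify which of the two relators encodes which formula; but no new ingredient beyond Propositions \ref{thmreg} and \ref{mod} is needed.
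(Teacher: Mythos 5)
Your proposal is correct and amounts to carefully writing out the argument the paper leaves implicit: the corollary is stated without proof because it is viewed as an immediate consequence of Proposition \ref{mod}, and your verification — checking that the two relators lie in the kernel, that the generators $\mS_{0,\chi}, \mS_{-1,\chi}$ give surjectivity, and that the quotient has a spanning set $\{(\mT_{\alpha,\chi}^m,0)\}_{m\ge 0}\cup\{(0,\mT_{\alpha^{-1},\chi}^m)\}_{m\ge 0}$ mapping bijectively onto the basis $\{\mS_{n,\chi}\}_{n\in\mathbb{Z}}$ — is exactly the calculation that underlies the authors' claim, with the left-module case handled symmetrically via the other two identities in Proposition \ref{mod}.
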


In addition to the bimodule structure on $\textrm{Hom}_G(\textrm{c-ind}_I^G(\chi), \textrm{c-ind}_I^G(\chi^s))$, we also have a composition product between elements $\mS_{n,\chi^s}\in\textrm{Hom}_G(\textrm{c-ind}_I^G(\chi^s), \textrm{c-ind}_I^G(\chi))$ and elements $\mS_{m,\chi}\in\textrm{Hom}_G(\textrm{c-ind}_I^G(\chi), \textrm{c-ind}_I^G(\chi^s))$.  The product of two such homomorphisms will be an element of $\hh_C(G,\chi)$.  

\begin{prop}
\label{mod2}
 The composition $\mS_{n,\chi^s}\mS_{m,\chi}$ has the following property:
$$\mS_{n,\chi^s}\mS_{m,\chi} = \begin{cases}\zeta(-1)q^{3 + 4\min(n,m)}\mT_{\alpha,\chi}^{\max(0,m-n)}\mT_{\alpha^{-1},\chi}^{\max(n-m,0)} & n,m\geq 0\\ \zeta(-1)q^{1+4\min(-n-1,-m-1)}\mT_{\alpha,\chi}^{\max(0,m-n)}\mT_{\alpha^{-1},\chi}^{\max(n-m,0)} & n,m<0\\ \zeta(-1)\mT_{\alpha,\chi}^{m-n} & n<0, m\geq 0\\ \zeta(-1)\mT_{\alpha^{-1},\chi}^{n-m} & m<0, n\geq 0.\end{cases}$$
\end{prop}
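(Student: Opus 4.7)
The strategy is to reduce the computation to the four base cases $(n,m)\in\{0,-1\}^2$ using Proposition \ref{mod}, and then to evaluate each base case directly from the identifications $\mS_{0,\chi}=\T_{n_s}e_\chi$ and $\mS_{-1,\chi}=\T_{n_{s'}}e_\chi$ recorded above.

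For the reduction, iterating Proposition \ref{mod} yields $\mS_{n,\chi}=\mS_{0,\chi}\mT_{\alpha,\chi}^n$ for $n\geq 0$ and $\mS_{n,\chi}=\mS_{-1,\chi}\mT_{\alpha^{-1},\chi}^{-1-n}$ for $n\leq -1$ (and similarly on the $\chi^s$-side), as well as the intertwining identities $\mT_{\alpha^{-1},\chi^s}\mS_{k,\chi}=\mS_{k,\chi}\mT_{\alpha,\chi}$ and $\mT_{\alpha,\chi^s}\mS_{k,\chi}=\mS_{k,\chi}\mT_{\alpha^{-1},\chi}$ for every $k\in\mathbb{Z}$. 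Expanding both factors of $\mS_{n,\chi^s}\mS_{m,\chi}$ around a base index, then commuting the $\mT_{\alpha^{\pm 1},\chi^s}$'s past the base factor $\mS_{0,\chi}$ or $\mS_{-1,\chi}$ via these intertwinings, and finally applying $\mT_{\alpha,\chi}\mT_{\alpha^{-1},\chi}=q^4$ from Proposition \ref{thmreg} to collapse opposite powers, one obtains an expression matching the proposition's formula in each of its four sign regimes, provided the base values are as claimed.

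For the base cases, the relation $e_{\chi^s}\T_n=\T_ne_\chi$ from Theorem \ref{propstr}(i) rewrites each $\mS_{i,\chi^s}\mS_{j,\chi}$ as a product $\T_{n_i}\T_{n_j}e_\chi$ in $\hh_C(G,I(1))$, with $n_i,n_j\in\{n_s,n_{s'}\}$. For the two mixed cases, a direct matrix computation gives $n_sn_{s'}=n_s^2\alpha^{-1}$ and $n_{s'}n_s=n_s^2\alpha$; length additivity $\ell(n_s)+\ell(n_{s'})=4=\ell(\alpha^{\pm 1})$ together with Lemma \ref{norm} (applied to $n_s^2=\textrm{diag}(-1,1,-1)\in T_0$) then yields $\T_{n_s}\T_{n_{s'}}=\T_{n_s^2}\T_{\alpha^{-1}}$ and $\T_{n_{s'}}\T_{n_s}=\T_{n_s^2}\T_\alpha$. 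Since $\T_{n_s^2}e_\chi=\chi(n_s^2)^{-1}e_\chi=\zeta(-1)e_\chi$ (with $\zeta(-1)\in\{\pm 1\}$), this gives $\mS_{0,\chi^s}\mS_{-1,\chi}=\zeta(-1)\mT_{\alpha^{-1},\chi}$ and $\mS_{-1,\chi^s}\mS_{0,\chi}=\zeta(-1)\mT_{\alpha,\chi}$.

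The remaining two base cases $\mS_{0,\chi^s}\mS_{0,\chi}=\T_{n_s}^2e_\chi$ and $\mS_{-1,\chi^s}\mS_{-1,\chi}=\T_{n_{s'}}^2e_\chi$ constitute the main obstacle. They are handled by the convolution method of Proposition \ref{str}: using the coset decompositions of $In_sI$ and $In_{s'}I$, the squares $\T_{n_s}^2$ and $\T_{n_{s'}}^2$ have support in $I\sqcup In_sI$ and $I\sqcup In_{s'}I$ respectively, and the identity \eqref{inv} allows one to rewrite the $In_sI$- and $In_{s'}I$-components as character sums over $\zeta$-values on diagonal elements. The essential point is that for regular $\chi$ these middle sums vanish after multiplication by $e_\chi$: the hypothesis $\chi^s\neq\chi$ forces $\zeta^{q+1}\neq 1$, so $\zeta$ is nontrivial on $\mathbb{F}_{q^2}^\times$, and via the surjectivity of the norm $\mathbb{F}_{q^2}^\times\twoheadrightarrow\mathbb{F}_q^\times$ one sees that $\zeta|_{\mathbb{F}_q^\times}$ is also nontrivial. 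Character orthogonality then kills both middle contributions, leaving only the identity-coset contributions $q^3\T_{n_s^2}e_\chi=\zeta(-1)q^3e_\chi$ and $q\T_{n_s^2}e_\chi=\zeta(-1)qe_\chi$. Substituting all four base values into the reduction step completes the proof.
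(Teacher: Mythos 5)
Your proof is correct and follows the paper's overall strategy: reduce to the four base cases $(n,m)\in\{0,-1\}^2$ via Proposition~\ref{mod}, then evaluate each base case via the identifications $\mS_{0,\chi}=\T_{n_s}e_\chi$, $\mS_{-1,\chi}=\T_{n_{s'}}e_\chi$. The one place you diverge is in the two mixed base cases: where the paper treats $\mS_{0,\chi^s}\mS_{-1,\chi}$ and $\mS_{-1,\chi^s}\mS_{0,\chi}$ by the same direct convolution computation it illustrates for $\mS_{0,\chi^s}\mS_{0,\chi}$ (using the coset decompositions \eqref{decs}, \eqref{decs'}), you instead note the identities $n_sn_{s'}=n_s^2\alpha^{-1}$, $n_{s'}n_s=n_s^2\alpha$ and invoke length additivity ($\ell(n_s)+\ell(n_{s'})=4=\ell(\alpha^{\pm1})$) together with Lemma~\ref{norm} and $\T_{n_s^2}e_\chi=\zeta(-1)e_\chi$ to get those two values at once, avoiding a second round of coset sums. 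The square base cases, the character-sum vanishing for regular $\chi$, and the reduction step all coincide with the paper's argument.
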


\begin{proof}
 By Proposition \ref{mod}, it suffices to compute the four products $\mS_{0,\chi^s}\mS_{0,\chi},\ \mS_{-1,\chi^s}\mS_{-1,\chi},$ $\ \mS_{-1,\chi^s}\mS_{0,\chi}$ and $\mS_{0,\chi^s}\mS_{-1,\chi}$.  The first two products follow again from Proposition 3.18 of \cite{CL76}.  We prove the third relation; the last follows similarly.  The method of proof is the same as in the proof of Proposition \ref{mod}, this time using equations \eqref{decs} and \eqref{decs'} for $n = 0$ and $n = -1$.  The definition of $\mS_{-1,\chi^s}$ and $\mS_{0,\chi}$ and properties of BN pairs show that the function $\mS_{-1,\chi^s}\mS_{0,\chi}(\mathbf{1}_I)$ will have support in $I \alpha I$.  This gives
\begin{eqnarray*}
 \mS_{-1,\chi^s}\mS_{0,\chi}(\mathbf{1}_I)(\alpha) & = & \mS_{-1,\chi^s}\left(\sum_{\sub{x,y\in \mathbb{F}_{q^2}}{x\ol{x} + y + \ol{y} = 0}}u(-x,\ol{y})n_s^{-1}.\mathbf{1}_I\right)(\alpha)\\
 & = & \sum_{\sub{x,y\in \mathbb{F}_{q^2}}{x\ol{x} + y + \ol{y} = 0}} u(-x,\ol{y})n_s^{-1}.\mathbf{1}_{In_{s'}I}(\alpha)\\
 & = & \sum_{\sub{x,y\in \mathbb{F}_{q^2}}{x\ol{x} + y + \ol{y} = 0}} \mathbf{1}_{In_{s'}I}(\alpha u(-x,\ol{y})n_s^{-1})\\
 & \stackrel{\textnormal{eq.}~ \eqref{inv}}{=} & \zeta(-1).
\end{eqnarray*}
\end{proof}

Combining Propositions \ref{thmreg}, \ref{mod}, and \ref{mod2}, we now have a full description of the algebra structure of $\hh_C(G,\gamma_\chi)$.  When $\ch(C) = p$, there is a more elegant presentation of $\hh_C(G,\gamma_\chi)$:

\begin{cor}
\label{twistmatalg}
 Assume $\ch(C) = p$, let $R = C[X,Y]/(XY)$, $\mathfrak{m} = (X,Y)\subset R$, and let $M$ denote the $R$-module $R/(Y)\oplus R/(X)\cong C[X]\oplus C[Y]$.  The ideal $\mathfrak{m}$ acts on $M$, and we lift the image of this action to $R$ by sending $(f(X),g(Y))\in \mathfrak{m}M$ to $f(X) + g(Y)\in R$.  We then have
 $$\hh_C(G,\gamma_\chi)\cong \begin{pmatrix}R & M\\ \mathfrak{m} & R \end{pmatrix}.$$
The isomorphism is given by
\begin{center}
\begin{tabular}{cclccll}
 $\mT_{\alpha,\chi}$ & $\mapsto$ & $\begin{pmatrix} Y & 0 \\ 0 & 0\end{pmatrix}$ & $\mT_{\alpha^{-1},\chi}$ & $\mapsto$ & $\begin{pmatrix} X & 0 \\ 0 & 0\end{pmatrix}$ & \\
 $\mT_{\alpha,\chi^s}$ & $\mapsto$ & $\begin{pmatrix} 0 & 0 \\ 0 & X\end{pmatrix}$ & $\mT_{\alpha^{-1},\chi^s}$ & $\mapsto$ & $\begin{pmatrix} 0 & 0 \\ 0 & Y\end{pmatrix}$ & \\
 $\mS_{n,\chi}$ & $\mapsto$ & $\begin{pmatrix} 0 & 0 \\ \zeta(-1)Y^{n + 1} & 0\end{pmatrix}$ & $\mS_{n,\chi^s}$ & $\mapsto$ & $\begin{pmatrix} 0 & (X^n,0) \\ 0 & 0\end{pmatrix}$ & for $n\geq 0$\\
 $\mS_{n,\chi}$ & $\mapsto$ & $\begin{pmatrix} 0 & 0 \\ \zeta(-1)X^{-n} & 0\end{pmatrix}$ & $\mS_{n,\chi^s}$ & $\mapsto$ & $\begin{pmatrix} 0 & (0,Y^{-n-1}) \\ 0 & 0\end{pmatrix}$ & for $n<0$.
\end{tabular}
\end{center}
The center $\mathcal{Z}_{\gamma_\chi}$ of $\hh_C(G,\gamma_\chi)$ consists of all elements of the form $$h(\mT_{\alpha,\chi},\mT_{\alpha^{-1},\chi}) + h(\mT_{\alpha^{-1},\chi^s}, \mT_{\alpha,\chi^s}),$$ where $h$ is a polynomial of two variables.
\end{cor}

\begin{remark}
 The description of the center $\mathcal{Z}_{\gamma_\chi}$ of $\hh_C(G,\gamma_\chi)$ is the same for the case $\ch(C)\neq p$.
\end{remark}

\begin{proof}
 It only remains to verify the claim about the center.  This follows easily from the isomorphism above.  \end{proof}

We may now classify finite-dimensional simple modules for the algebra $\hh_C(G,\gamma_\chi)$.  

\begin{prop}
 Assume $\ch(C)\neq p$.  Then $\hh_C(G,\gamma_\chi)$ admits no characters.
\end{prop}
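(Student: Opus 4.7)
The plan is to exploit the block decomposition inside $\hh_C(G,\gamma_\chi)$ together with the explicit relation $\mS_{0,\chi^s}\mS_{0,\chi} = \zeta(-1)q^3$ from Proposition \ref{mod2}. Suppose, for contradiction, that $\mu \colon \hh_C(G,\gamma_\chi) \to C$ is an algebra character. The idempotents $e_\chi$ and $e_{\chi^s}$ lying in $\hh_C(G,\gamma_\chi)$ are orthogonal and sum to the identity $e_{\gamma_\chi}$, so $\mu(e_\chi)$ and $\mu(e_{\chi^s})$ are orthogonal idempotents in $C$ summing to $1$; hence exactly one of them equals $1$ and the other equals $0$. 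Without loss of generality, assume $\mu(e_\chi) = 1$ and $\mu(e_{\chi^s}) = 0$.

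Next I would observe that every ``off-diagonal'' basis element $\mS_{n,\chi} \in \textnormal{Hom}_G(\textnormal{c-ind}_I^G(\chi),\textnormal{c-ind}_I^G(\chi^s))$ factors as $\mS_{n,\chi} = e_{\chi^s}\mS_{n,\chi}e_\chi$, and similarly $\mS_{n,\chi^s} = e_\chi\mS_{n,\chi^s}e_{\chi^s}$. Applying $\mu$ and using multiplicativity together with $\mu(e_{\chi^s}) = 0$ forces $\mu(\mS_{n,\chi}) = 0$ and $\mu(\mS_{n,\chi^s}) = 0$ for every $n \in \mathbb{Z}$.

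Now consider the product $\mS_{0,\chi^s}\mS_{0,\chi}$. By Proposition \ref{mod2} (taking $n = m = 0$, so $\min(n,m) = 0$ and $\max$'s vanish), this product equals $\zeta(-1)q^3$ in $\hh_C(G,\chi) \subset \hh_C(G,\gamma_\chi)$, i.e.\ it equals $\zeta(-1)q^3 \cdot e_\chi$. On one hand, applying $\mu$ gives $\mu(\mS_{0,\chi^s})\mu(\mS_{0,\chi}) = 0$; on the other hand, it gives $\zeta(-1)q^3 \cdot \mu(e_\chi) = \zeta(-1)q^3$. Hence $\zeta(-1)q^3 = 0$ in $C$. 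Since $\ch(C) \neq p$ we have $q^3 \neq 0$ in $C$, and $\zeta(-1) \in \{\pm 1\}$ is invertible; this contradiction completes the argument.

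The only delicate point is verifying that the factorisations $\mS_{n,\chi} = e_{\chi^s}\mS_{n,\chi}e_\chi$ and $\mS_{n,\chi^s} = e_\chi\mS_{n,\chi^s}e_{\chi^s}$ hold as equalities in $\hh_C(G,\gamma_\chi)$; this is immediate from the definitions, since $\mS_{n,\chi}$ is the homomorphism from the $\chi$-component to the $\chi^s$-component of $\textnormal{c-ind}_{I(1)}^G(1)\cdot e_{\gamma_\chi}$, and $e_\chi$, $e_{\chi^s}$ are the corresponding projectors. Everything else is a routine application of the multiplicativity of $\mu$, so the ``hard'' part is really just spotting that the composition $\mS_{0,\chi^s}\mS_{0,\chi}$ gives the scalar $\zeta(-1)q^3$ which is visibly nonzero when $\ch(C)\neq p$.
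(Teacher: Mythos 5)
Your proof is correct and follows essentially the same route as the paper: kill the off-diagonal elements $\mS_{n,\chi}$, $\mS_{n,\chi^s}$ under $\mu$, then invoke Proposition~\ref{mod2} to reach a contradiction when $q$ is invertible. The only difference is a cosmetic one in the first step: you use the orthogonality of the idempotents $e_\chi, e_{\chi^s}$ (forcing one of $\mu(e_\chi), \mu(e_{\chi^s})$ to vanish, hence sandwiching each $\mS$ between a $0$), whereas the paper notes directly that $\mS_{n,\chi}^2 = \mS_{n,\chi^s}^2 = 0$ in $\hh_C(G,\gamma_\chi)$ so $\mu$ must kill each $\mS$; both then hit the same wall at $\mS_{0,\chi^s}\mS_{0,\chi} = \zeta(-1)q^3 e_\chi$.
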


\begin{proof}
 Let $\mu$ be a character of $\hh_C(G,\gamma_\chi)$.  Since $\mS_{n,\chi}^2 = \mS_{n,\chi^s}^2 = 0$, we must have $\mu(\mS_{n,\chi}) = \mu(\mS_{n,\chi^s}) = 0$ for every $n\in \mathbb{Z}$.  Proposition \ref{mod2} now implies that all elements of $\hh_C(G,\chi)$ and $\hh_C(G,\chi^s)$ map to 0.  This gives a contradiction, since $1 = \mu(\textrm{id}_{\textrm{c-ind}_I^G(\gamma_\chi)}) = \mu(\textnormal{id}_{\chi} + \textnormal{id}_{\chi^s}) = 0$.  
\end{proof}

\begin{defn}
Assume $\ch(C) = p$, and let $i\in \{0,1\}$.  We define $\mu_i: \hh_C(G,\gamma_\chi)\rightarrow C$ to be the character for which $$\textnormal{id}_{\chi^{s^i}} \mapsto 1$$ and every other basis element in the set \eqref{basis} maps to 0.  
\end{defn}

\begin{prop}
 Assume $\ch(C)= p$.  Then the characters of $\hh_C(G,\gamma_\chi)$ are exactly $\mu_0$ and $\mu_1$.  
\end{prop}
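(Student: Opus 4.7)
The plan is to show that any character $\mu$ of $\hh_C(G,\gamma_\chi)$ is forced to kill all ``off-diagonal'' and ``$\mT$'' generators, with only one of the two orthogonal idempotents surviving. First, since $\textnormal{id}_\chi$ and $\textnormal{id}_{\chi^s}$ are orthogonal idempotents with $\textnormal{id}_\chi + \textnormal{id}_{\chi^s} = \textnormal{id}_{\textnormal{c-ind}_I^G(\gamma_\chi)}$, their images $\mu(\textnormal{id}_\chi)$ and $\mu(\textnormal{id}_{\chi^s})$ are orthogonal idempotents of $C$ summing to $1$. Hence one of them equals $1$ and the other equals $0$; by symmetry, assume $\mu(\textnormal{id}_\chi) = 1$ and $\mu(\textnormal{id}_{\chi^s}) = 0$.

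Next, I would dispose of the generators supported on the ``wrong'' component. From the matrix description in Corollary \ref{twistmatalg} (or equivalently, directly from Proposition \ref{orbits} and the definition of the basis elements), one has
\[
\mT_{\alpha,\chi^s} = \textnormal{id}_{\chi^s}\cdot\mT_{\alpha,\chi^s},\qquad \mT_{\alpha^{-1},\chi^s} = \textnormal{id}_{\chi^s}\cdot\mT_{\alpha^{-1},\chi^s},
\]
and similarly $\mS_{n,\chi} = \textnormal{id}_{\chi^s}\cdot\mS_{n,\chi}$, $\mS_{n,\chi^s} = \textnormal{id}_\chi\cdot\mS_{n,\chi^s}\cdot\textnormal{id}_{\chi^s}$. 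Applying $\mu$ to each of these, multiplicativity and $\mu(\textnormal{id}_{\chi^s}) = 0$ force $\mu$ to vanish on every $\mT_{\alpha^{\pm 1},\chi^s}$ and every $\mS_{n,\chi}$ and $\mS_{n,\chi^s}$.

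The remaining step is to show $\mu$ also vanishes on $\mT_{\alpha,\chi}$ and $\mT_{\alpha^{-1},\chi}$; this is where the characteristic $p$ hypothesis intervenes. Here I invoke Proposition \ref{mod2}: taking $(n,m) = (-1,0)$ and $(0,-1)$, and noting that in characteristic $p$ the prefactors $q^{3+4\min(n,m)}$ and $q^{1+4\min(-n-1,-m-1)}$ all vanish, one gets the clean identities
\[
\mS_{-1,\chi^s}\mS_{0,\chi} = \zeta(-1)\mT_{\alpha,\chi},\qquad \mS_{0,\chi^s}\mS_{-1,\chi} = \zeta(-1)\mT_{\alpha^{-1},\chi}.
\]
Applying $\mu$ and using the previous step gives $\mu(\mT_{\alpha,\chi}) = \mu(\mT_{\alpha^{-1},\chi}) = 0$. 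This determines $\mu$ completely on the basis of $\hh_C(G,\gamma_\chi)$ described before Proposition \ref{mod}, and the resulting assignment is exactly $\mu_0$; the symmetric case yields $\mu_1$.

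The main obstacle is the bookkeeping in the third step: one must be sure that every basis element of $\hh_C(G,\gamma_\chi)$ is accounted for and that the characteristic $p$ hypothesis is used in precisely the right places (so that the $q^{3+4\min(n,m)}$ coefficients in Proposition \ref{mod2} die). Finally, to conclude that $\mu_0$ and $\mu_1$ are \emph{actual} characters and not merely the only candidates, I would note that the matrix presentation in Corollary \ref{twistmatalg} makes it immediate that projection onto either diagonal entry followed by evaluation at the constant term defines a ring homomorphism to $C$; these are $\mu_0$ and $\mu_1$.
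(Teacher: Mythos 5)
Your proposal is correct and follows essentially the same route as the paper's proof: kill every basis element other than $\textnormal{id}_\chi$ and $\textnormal{id}_{\chi^s}$ by combining Propositions \ref{mod} and \ref{mod2} with the characteristic-$p$ vanishing of the $q$-power coefficients, then use $\textnormal{id}_\chi + \textnormal{id}_{\chi^s} = 1$, $\textnormal{id}_\chi\textnormal{id}_{\chi^s} = 0$. The only cosmetic difference is in how you dispose of the $\mS$-elements: you sandwich them with the orthogonal idempotents, whereas the paper simply observes $\mS_{n,\chi}^2 = \mS_{n,\chi^s}^2 = 0$; both are one-line consequences of the twisted matrix algebra structure and yield the same conclusion.
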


\begin{proof}
 As in the characteristic prime-to-$p$ case, we use Propositions \ref{mod} and \ref{mod2} to conclude that every basis element besides $\textnormal{id}_{\chi}$ and $\textnormal{id}_{\chi^s}$ must map to zero.  Since $\textnormal{id}_{\chi} + \textnormal{id}_{\chi^s} = \textrm{id}_{\textrm{c-ind}_I^G(\gamma_\chi)}$ and $\textnormal{id}_{\chi} \textnormal{id}_{\chi^s} = 0$, we see that the characters must be exactly those stated.
\end{proof}

We now turn our attention to modules of dimension greater than one.  We first assume that $\ch(C) \neq p$.  Fix a choice of square root $\sqrt{\zeta(-1)}$ of $\zeta(-1)$, and let 
$$\mathcal{A} = \sqrt{\zeta(-1)}(\mS_{0,\chi}+\mS_{-1,\chi^s});$$  
then $\mathcal{A}^2 = \mT_{\alpha,\chi} + \mT_{\alpha^{-1},\chi^s}$, and $\hh_C(G,\gamma_\chi)$ is free of rank two over $\mathcal{Z}_{\gamma_\chi}[\mathcal{A}]$, with basis $\{\textnormal{id}_{\chi}, \textnormal{id}_{\chi^s}\}$.  

Let $\lambda\in C^\times$, and let $\mu_{\lambda,\sqrt{\lambda}}$ denote the character of $\mathcal{Z}_{\gamma_\chi}[\mathcal{A}]$ defined by 
$$(\mT_{\alpha,\chi} + \mT_{\alpha^{-1},\chi^s}) \mapsto \lambda,\quad (\mT_{\alpha,\chi^s} + \mT_{\alpha^{-1},\chi}) \mapsto q^4\lambda^{-1},\qquad \mathcal{A} \mapsto \sqrt{\lambda}.$$
We consider the induced representation $\mu_{\lambda,\sqrt{\lambda}}\otimes_{\mathcal{Z}_{\gamma_\chi}[\mathcal{A}]}\hh_C(G,\gamma_\chi)$.  Since the algebra $\hh_C(G,\gamma_\chi)$ admits no characters, this immediately implies that this module is simple.  

\begin{lemma}\label{Mlambda}
 The (isomorphism class of the) representation $\mu_{\lambda,\sqrt{\lambda}}\otimes_{\mathcal{Z}_{\gamma_\chi}[\mathcal{A}]}\hh_C(G,\gamma_\chi)$ is independent of the choice of square roots $\sqrt{\lambda}$ and $\sqrt{\zeta(-1)}$.
\end{lemma}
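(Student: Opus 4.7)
The plan is to construct an explicit $\hh_C(G,\gamma_\chi)$-linear isomorphism between the two induced modules by locating inside $\mu_{\lambda,\sqrt{\lambda}}\otimes_{\mathcal{Z}_{\gamma_\chi}[\mathcal{A}]}\hh_C(G,\gamma_\chi)$ a nonzero $\mathcal{Z}_{\gamma_\chi}[\mathcal{A}]$-submodule isomorphic to $\mu_{\lambda,-\sqrt{\lambda}}$, and then invoking tensor-hom adjunction together with simplicity.

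First I would observe that both induced modules are two-dimensional and simple: since $\hh_C(G,\gamma_\chi)$ is free of rank two over $\mathcal{Z}_{\gamma_\chi}[\mathcal{A}]$ on the basis $\{\textnormal{id}_{\chi},\textnormal{id}_{\chi^s}\}$, each has $C$-dimension two, and the preceding proposition shows $\hh_C(G,\gamma_\chi)$ admits no characters, so the only possible proper submodule (which would be one-dimensional) is ruled out.

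Next, writing $V := \mu_{\lambda,\sqrt{\lambda}}\otimes_{\mathcal{Z}_{\gamma_\chi}[\mathcal{A}]}\hh_C(G,\gamma_\chi)$ and letting $v$ generate $\mu_{\lambda,\sqrt{\lambda}}$, I would compute the action of $\mathcal{A}$ in the basis $e_1 := v\otimes\textnormal{id}_{\chi}$, $e_2 := v\otimes\textnormal{id}_{\chi^s}$. The key input is that $\mS_{0,\chi}\in\textnormal{Hom}_G(\textnormal{c-ind}_I^G(\chi),\textnormal{c-ind}_I^G(\chi^s))$ and $\mS_{-1,\chi^s}\in\textnormal{Hom}_G(\textnormal{c-ind}_I^G(\chi^s),\textnormal{c-ind}_I^G(\chi))$, so the orthogonal-idempotent relations give $\textnormal{id}_{\chi}\mS_{0,\chi}=0=\textnormal{id}_{\chi^s}\mS_{-1,\chi^s}$ together with $\textnormal{id}_{\chi}\mS_{-1,\chi^s}=\mS_{-1,\chi^s}$ and $\textnormal{id}_{\chi^s}\mS_{0,\chi}=\mS_{0,\chi}$ (and dually on the right). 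This yields the crucial ``sliding'' identities $\textnormal{id}_{\chi}\mathcal{A}=\sqrt{\zeta(-1)}\mS_{-1,\chi^s}=\mathcal{A}\,\textnormal{id}_{\chi^s}$ and $\textnormal{id}_{\chi^s}\mathcal{A}=\sqrt{\zeta(-1)}\mS_{0,\chi}=\mathcal{A}\,\textnormal{id}_{\chi}$. Moving $\mathcal{A}\in\mathcal{Z}_{\gamma_\chi}[\mathcal{A}]$ across the tensor product then gives $e_1\cdot\mathcal{A}=\sqrt{\lambda}\,e_2$ and $e_2\cdot\mathcal{A}=\sqrt{\lambda}\,e_1$, so $w := e_1-e_2$ is nonzero with $w\cdot\mathcal{A}=-\sqrt{\lambda}\,w$.

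Since $\mathcal{Z}_{\gamma_\chi}$ acts on both $\mu_{\lambda,\sqrt{\lambda}}$ and $\mu_{\lambda,-\sqrt{\lambda}}$ through the same central character (determined by $\lambda$ alone, independent of the chosen root), the line $C\cdot w\subset V$ is a $\mathcal{Z}_{\gamma_\chi}[\mathcal{A}]$-submodule isomorphic to $\mu_{\lambda,-\sqrt{\lambda}}$. By tensor-hom adjunction, this inclusion extends uniquely to an $\hh_C(G,\gamma_\chi)$-linear map $\Phi:\mu_{\lambda,-\sqrt{\lambda}}\otimes_{\mathcal{Z}_{\gamma_\chi}[\mathcal{A}]}\hh_C(G,\gamma_\chi)\to V$, which is nonzero since it carries the canonical generator to $w\neq 0$. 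By simplicity of the source, $\Phi$ is injective, and by equality of dimensions it is an isomorphism.

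The main subtlety is the sliding identity $\textnormal{id}_{\chi}\mathcal{A}=\mathcal{A}\,\textnormal{id}_{\chi^s}$: one must carefully track the composition conventions in $\textnormal{End}_G(\textnormal{c-ind}_I^G(\chi\oplus\chi^s))$ to check that the off-diagonal operators $\mS_{0,\chi}$ and $\mS_{-1,\chi^s}$ are annihilated by the correct idempotents on the correct side, so that the scalar $\sqrt{\lambda}$ emerges untwisted when $\mathcal{A}$ is slid across the tensor. Once this identity is pinned down, the eigenvalue flip on $w$ and the entire argument fall out formally.
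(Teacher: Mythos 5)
Your proof is correct and follows essentially the same route as the paper: the authors likewise locate $\mu_{\lambda,-\sqrt{\lambda}}$ inside the restriction to $\mathcal{Z}_{\gamma_\chi}[\mathcal{A}]$ by considering the vector $v\otimes(\textnormal{id}_{\chi}-\textnormal{id}_{\chi^s})$, and then conclude by Frobenius reciprocity and simplicity of both induced modules. Your write-up merely makes explicit the ``sliding'' computation $\textnormal{id}_{\chi}\mathcal{A}=\mathcal{A}\,\textnormal{id}_{\chi^s}$ that the paper leaves implicit, and that computation is accurate.
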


\begin{proof}
 Let $\langle v\rangle_C$ denote the underlying space of $\mu_{\lambda,\sqrt{\lambda}}$, so that $\mu_{\lambda,\sqrt{\lambda}}\otimes_{\mathcal{Z}_{\gamma_\chi}[\mathcal{A}]}\hh_C(G,\gamma_\chi)$ is spanned by $\{v\otimes \textnormal{id}_{\chi}, v\otimes \textnormal{id}_{\chi^s}\}$.  The action of $\mathcal{Z}_{\gamma_\chi}[\mathcal{A}]$ on the vector $v\otimes(\textnormal{id}_{\chi} - \textnormal{id}_{\chi^s})$ shows that $\mu_{\lambda,-\sqrt{\lambda}}$ is contained in $\mu_{\lambda,\sqrt{\lambda}}\otimes_{\mathcal{Z}_{\gamma_\chi}[\mathcal{A}]}\hh_C(G,\gamma_\chi)|_{\mathcal{Z}_{\gamma_\chi}[\mathcal{A}]}$.  By Frobenius Reciprocity we have 
\begin{eqnarray*}
 \{0\} & \neq & \textrm{Hom}_{\mathcal{Z}_{\gamma_\chi}[\mathcal{A}]}(\mu_{\lambda,-\sqrt{\lambda}},~\mu_{\lambda,\sqrt{\lambda}}\otimes_{\mathcal{Z}_{\gamma_\chi}[\mathcal{A}]}\hh_C(G,\gamma_\chi)|_{\mathcal{Z}_{\gamma_\chi}[\mathcal{A}]})\\
 & \cong & \textrm{Hom}_{\hh_C(G,\gamma_\chi)}(\mu_{\lambda,-\sqrt{\lambda}}\otimes_{\mathcal{Z}_{\gamma_\chi}[\mathcal{A}]}\hh_C(G,\gamma_\chi),~\mu_{\lambda,\sqrt{\lambda}}\otimes_{\mathcal{Z}_{\gamma_\chi}[\mathcal{A}]}\hh_C(G,\gamma_\chi)).
\end{eqnarray*}
  As both modules are simple, the result follows.
\end{proof}

Given this lemma, we define $M(\lambda) := \mu_{\lambda,\sqrt{\lambda}}\otimes_{\mathcal{Z}_{\gamma_\chi}[\mathcal{A}]}\hh_C(G,\gamma_\chi)$.  Examining central characters, we see that the modules $M(\lambda)$ are pairwise nonisomorphic for distinct values of $\lambda$.  

\begin{thm}\label{charCneq0reg}
 Assume $\ch(C) \neq p$.  Every finite-dimensional simple right $\hh_C(G,\gamma_\chi)$-module is isomorphic to one of the form $M(\lambda), \lambda\in C^\times$.  
\end{thm}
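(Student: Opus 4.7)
The strategy is a Clifford-style argument exploiting the fact, recorded just before the definition of $M(\lambda)$, that $\hh_C(G,\gamma_\chi)$ is free of rank two over the commutative subalgebra $\mathcal{Z}_{\gamma_\chi}[\mathcal{A}]$, with basis $\{\textnormal{id}_\chi,\textnormal{id}_{\chi^s}\}$. Commutativity follows from the observation that $\mathcal{A}^2 = \mT_{\alpha,\chi}+\mT_{\alpha^{-1},\chi^s}$ is itself central: it is the element $h(\mT_{\alpha,\chi},\mT_{\alpha^{-1},\chi})+h(\mT_{\alpha^{-1},\chi^s},\mT_{\alpha,\chi^s})$ for $h(X,Y) = X$ in the center description of Corollary \ref{twistmatalg}, which by the subsequent Remark applies equally when $\ch(C) \neq p$.

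Now let $M$ be a finite-dimensional simple right $\hh_C(G,\gamma_\chi)$-module. By Schur's lemma, each central element acts on $M$ by a scalar; write $\lambda$ and $\lambda'$ for the scalars by which $\mT_{\alpha,\chi}+\mT_{\alpha^{-1},\chi^s}$ and $\mT_{\alpha,\chi^s}+\mT_{\alpha^{-1},\chi}$ act, respectively. Using $\textnormal{id}_\chi\textnormal{id}_{\chi^s} = 0$, $\textnormal{id}_\chi+\textnormal{id}_{\chi^s} = \textnormal{id}$, and Proposition \ref{thmreg}, the identity
\begin{equation*}
 (\mT_{\alpha,\chi}+\mT_{\alpha^{-1},\chi^s})(\mT_{\alpha,\chi^s}+\mT_{\alpha^{-1},\chi}) \;=\; \mT_{\alpha,\chi}\mT_{\alpha^{-1},\chi}+\mT_{\alpha^{-1},\chi^s}\mT_{\alpha,\chi^s} \;=\; q^4\cdot\textnormal{id}
\end{equation*}
holds in $\hh_C(G,\gamma_\chi)$, forcing $\lambda\lambda' = q^4$. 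Since $\ch(C) \neq p$ we have $q^4 \neq 0$ in $C$, and hence $\lambda \in C^\times$. Fixing a square root $\mu$ of $\lambda$, the relation $\mathcal{A}^2 = \lambda$ with $\lambda \neq 0$ forces $\mathcal{A}$ to be diagonalizable on $M$ with eigenvalues $\pm\mu$, so I may choose a nonzero eigenvector $v \in M$.

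The line $Cv$ is then a one-dimensional $\mathcal{Z}_{\gamma_\chi}[\mathcal{A}]$-submodule of $M$ isomorphic to one of the characters $\mu_{\lambda,\pm\mu}$. Tensor-Hom adjunction produces a nonzero $\hh_C(G,\gamma_\chi)$-equivariant map
\begin{equation*}
 M(\lambda) \;=\; \mu_{\lambda,\pm\mu}\otimes_{\mathcal{Z}_{\gamma_\chi}[\mathcal{A}]}\hh_C(G,\gamma_\chi) \;\longrightarrow\; M,
\end{equation*}
which is well-defined independently of the sign choice by the preceding lemma. To finish, I would observe that $M(\lambda)$ is itself simple: it is two-dimensional over $C$, and any proper submodule would necessarily be a character of $\hh_C(G,\gamma_\chi)$, which the proposition immediately preceding the theorem rules out when $\ch(C) \neq p$. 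The displayed map is then a nonzero homomorphism between simple modules, hence an isomorphism, giving $M \cong M(\lambda)$ with $\lambda \in C^\times$ as claimed. The only genuinely nontrivial inputs are the centrality of $\mathcal{A}^2$ together with the freeness statement already recorded in the text; granted these, the remainder of the argument is essentially formal.
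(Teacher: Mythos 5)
Your proof is correct and takes essentially the same route as the paper: restrict $M$ to the commutative subalgebra $\mathcal{Z}_{\gamma_\chi}[\mathcal{A}]$, find a character $\mu_{\lambda,\sqrt{\lambda}}$ inside, and then use Frobenius reciprocity together with simplicity of both $M$ and $M(\lambda)$ to force $M\cong M(\lambda)$. The only difference is one of exposition: the paper simply asserts that $M|_{\mathcal{Z}_{\gamma_\chi}[\mathcal{A}]}$ contains a character $\mu_{\lambda,\sqrt{\lambda}}$, whereas you spell out the identity $(\mT_{\alpha,\chi}+\mT_{\alpha^{-1},\chi^s})(\mT_{\alpha,\chi^s}+\mT_{\alpha^{-1},\chi}) = q^4\cdot\textnormal{id}$ to justify $\lambda\in C^\times$ and then argue that $\mathcal{A}$ is diagonalizable (using $\ch(C)\neq 2$, which follows from Assumption 3.3 since $|H|$ is even) to produce the required eigenvector.
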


\begin{proof}
 Assume $M$ is a nonzero simple right module, and assume that $M|_{\mathcal{Z}_{\gamma_\chi}[\mathcal{A}]}$ contains a character $\mu_{\lambda,\sqrt{\lambda}}$.  Frobenius Reciprocity gives $$\{0\}\neq \textrm{Hom}_{\mathcal{Z}_{\gamma_\chi}[\mathcal{A}]}(\mu_{\lambda,\sqrt{\lambda}},M|_{\mathcal{Z}_{\gamma_\chi}[\mathcal{A}]})\cong \textrm{Hom}_{\hh_C(G,\gamma_\chi)}(M(\lambda), M),$$ which implies $M(\lambda)\cong M$ by simplicity of $M(\lambda)$ and $M$.  
\end{proof}

Assume now that $\ch(C) = p$, fix a choice of square root $\sqrt{\zeta(-1)}$ of $\zeta(-1)$, and let 
$$\mathcal{A}_1 = \sqrt{\zeta(-1)}(\mS_{0,\chi} + \mS_{-1,\chi^s}),\qquad \mathcal{A}_2 = \sqrt{\zeta(-1)}(\mS_{0,\chi^s} + \mS_{-1,\chi}).$$
Note that $\mathcal{A}_1\mathcal{A}_2 = \mathcal{A}_2\mathcal{A}_1 = 0$, $\mathcal{A}_1^2 = \mT_{\alpha,\chi} + \mT_{\alpha^{-1},\chi^s}$, and $\mathcal{A}_2^2 = \mT_{\alpha^{-1},\chi} + \mT_{\alpha,\chi^s}$.  The algebra $\hh_C(G,\gamma_\chi)$ is free of rank two over $\mathcal{Z}_{\gamma_\chi}[\mathcal{A}_1,\mathcal{A}_2]$, with basis $\{\textnormal{id}_{\chi},\textnormal{id}_{\chi^s}\}$.  

Let $\lambda,\lambda'\in C$ be such that $\lambda\lambda' = 0$, and let $\mu_{\lambda,\lambda',\sqrt{\lambda},\sqrt{\lambda'}}$ denote the character of $\mathcal{Z}_{\gamma_\chi}[\mathcal{A}_1,\mathcal{A}_2]$ defined by

\begin{center}
\begin{tabular}{rclrcl}
 $(\mT_{\alpha,\chi} + \mT_{\alpha^{-1},\chi^s})$ & $\mapsto$ & $\lambda$, & $(\mT_{\alpha^{-1},\chi} + \mT_{\alpha,\chi^s})$ & $\mapsto$ & $\lambda',$\\
 $\mathcal{A}_1$ & $\mapsto$ & $\sqrt{\lambda}$, & $\mathcal{A}_2$ & $\mapsto$ & $\sqrt{\lambda'}.$
\end{tabular}
\end{center}

We consider the induced representation $\mu_{\lambda,\lambda',\sqrt{\lambda},\sqrt{\lambda'}}\otimes_{\mathcal{Z}_{\gamma_\chi}[\mathcal{A}_1,\mathcal{A}_2]}\hh_C(G,\gamma_\chi)$.  

\begin{prop}
 The module $\mu_{\lambda,\lambda',\sqrt{\lambda},\sqrt{\lambda'}}\otimes_{\mathcal{Z}_{\gamma_\chi}[\mathcal{A}_1,\mathcal{A}_2]}\hh_C(G,\gamma_\chi)$ is reducible if and only if $(\lambda,\lambda') = (0,0)$.  In this case, we have $$\mu_{0,0,0,0}\otimes_{\mathcal{Z}_{\gamma_\chi}[\mathcal{A}_1,\mathcal{A}_2]}\hh_C(G,\gamma_\chi)\cong \mu_0\oplus \mu_1.$$
\end{prop}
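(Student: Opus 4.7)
The plan is to understand $M := \mu_{\lambda,\lambda',\sqrt{\lambda},\sqrt{\lambda'}}\otimes_{\mathcal{Z}_{\gamma_\chi}[\mathcal{A}_1,\mathcal{A}_2]}\hh_C(G,\gamma_\chi)$ completely by writing down its action in an explicit basis. Since $\hh_C(G,\gamma_\chi)$ is free of rank two over $\mathcal{Z}_{\gamma_\chi}[\mathcal{A}_1,\mathcal{A}_2]$ with basis $\{\textnormal{id}_{\chi},\textnormal{id}_{\chi^s}\}$, the module $M$ is two-dimensional with basis $e_1 := v\otimes\textnormal{id}_{\chi}$ and $e_2 := v\otimes\textnormal{id}_{\chi^s}$.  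The strategy is to compute the right action of every basis operator of $\hh_C(G,\gamma_\chi)$ on the pair $(e_1,e_2)$, then use the fact (established in the preceding Proposition) that $\mu_0$ and $\mu_1$ are the only characters of $\hh_C(G,\gamma_\chi)$ to determine when a one-dimensional submodule can exist, and finally verify the claimed splitting when $(\lambda,\lambda')=(0,0)$.

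The central technical step is the explicit action computation.  For the diagonal operators I use centrality of $\mathcal{A}_1^2 = \mT_{\alpha,\chi} + \mT_{\alpha^{-1},\chi^s}$ and $\mathcal{A}_2^2 = \mT_{\alpha^{-1},\chi} + \mT_{\alpha,\chi^s}$ to pull them across the tensor, together with the orthogonality of $\textnormal{id}_{\chi},\textnormal{id}_{\chi^s}$, to get $e_1\cdot\mT_{\alpha,\chi} = \lambda e_1$, $e_1\cdot\mT_{\alpha^{-1},\chi} = \lambda' e_1$, and symmetrically for $e_2$, with all cross-actions zero.  For the off-diagonal operators I exploit the identities $\mathcal{A}_1\textnormal{id}_{\chi} = \sqrt{\zeta(-1)}\,\mS_{0,\chi}$, $\mathcal{A}_1\textnormal{id}_{\chi^s} = \sqrt{\zeta(-1)}\,\mS_{-1,\chi^s}$, $\mathcal{A}_2\textnormal{id}_{\chi^s} = \sqrt{\zeta(-1)}\,\mS_{0,\chi^s}$, $\mathcal{A}_2\textnormal{id}_{\chi} = \sqrt{\zeta(-1)}\,\mS_{-1,\chi}$, each of which follows immediately from the definition of $\mathcal{A}_1,\mathcal{A}_2$ combined with the fact that each $\mS$-operator is supported in a single off-diagonal block (so the complementary term is annihilated by the idempotent).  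Pulling $\mathcal{A}_i$ across the tensor then yields, for instance, $e_2\cdot\mS_{0,\chi} = \sqrt{\lambda/\zeta(-1)}\,e_1$ and $e_1\cdot\mS_{-1,\chi^s} = \sqrt{\lambda/\zeta(-1)}\,e_2$, with a parallel pair of formulas for $\mS_{0,\chi^s},\mS_{-1,\chi}$ involving $\sqrt{\lambda'/\zeta(-1)}$; all remaining $\mS$-actions vanish.

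With these formulas, the reducibility question is immediate.  Any proper submodule of the two-dimensional $M$ is one-dimensional, hence a character, so must equal $\mu_0$ or $\mu_1$.  A copy of $\mu_0$ lies necessarily in the $\textnormal{id}_{\chi}$-eigenspace $Ce_1$, and the off-diagonal formulas show that $\langle e_1\rangle$ is stable iff $\sqrt{\lambda/\zeta(-1)}$ and $\sqrt{\lambda'/\zeta(-1)}$ both vanish, i.e., $\lambda = \lambda' = 0$; a symmetric argument handles $\mu_1\subset M$ via $Ce_2$.  Therefore $M$ is reducible precisely when $(\lambda,\lambda')=(0,0)$, in which case every basis operator aside from the two idempotents annihilates both $e_1$ and $e_2$, giving $\langle e_1\rangle\cong\mu_0$, $\langle e_2\rangle\cong\mu_1$, and $M \cong \mu_0\oplus\mu_1$.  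The main obstacle is really just bookkeeping in the action computation: one has to keep track of which idempotent $\textnormal{id}_{\chi}$ or $\textnormal{id}_{\chi^s}$ sits on which side of each $\mS$-operator in order to express it as $\mathcal{A}_i\cdot\textnormal{id}_{\chi^?}$ (or $\textnormal{id}_{\chi^?}\cdot\mathcal{A}_i$) and thereby move the $\mathcal{A}_i$-factor across the tensor product to act on $v$ by the prescribed scalar.
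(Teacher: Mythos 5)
Your proof is correct and takes essentially the same route as the paper: both arguments reduce to the fact that a one-dimensional submodule must be $\mu_0$ or $\mu_1$, on which $\mathcal{A}_1$ and $\mathcal{A}_2$ act by zero, forcing $(\lambda,\lambda')=(0,0)$, and both then split $M$ via the idempotents $\textnormal{id}_\chi,\textnormal{id}_{\chi^s}$. The only difference is one of economy: the paper invokes centrality of $\mathcal{A}_1^2,\mathcal{A}_2^2$ to conclude $\lambda=\lambda'=0$ in one line, whereas you recover the same conclusion by unwinding $\mathcal{A}_i$ into its constituent $\mS$-operators and computing the full action matrix — more bookkeeping for the same payoff.

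One small imprecision worth flagging: the phrase ``all remaining $\mS$-actions vanish'' is only literally true when $(\lambda,\lambda')=(0,0)$; for general $\lambda$, operators such as $\mS_{1,\chi}$ act on $e_2$ by a nonzero multiple of $e_1$ (e.g.\ $e_2\cdot\mS_{1,\chi}=\lambda\sqrt{\lambda/\zeta(-1)}\,e_1$). This does not damage your argument, since $\hh_C(G,\gamma_\chi)$ is generated by $\textnormal{id}_\chi,\textnormal{id}_{\chi^s},\mathcal{A}_1,\mathcal{A}_2$ (so stability under the four $\mS$-operators entering $\mathcal{A}_1,\mathcal{A}_2$ together with the idempotents suffices), but the sentence as written overstates the computation and would be worth tightening to say explicitly that you are only checking a generating set.
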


\begin{proof}
 Assume that $\mu_{\lambda,\lambda',\sqrt{\lambda},\sqrt{\lambda'}}\otimes_{\mathcal{Z}_{\gamma_\chi}[\mathcal{A}_1,\mathcal{A}_2]}\hh_C(G,\gamma_\chi)$ is reducible, so that it contains either $\mu_0$ or $\mu_1$.  In either case, both $\mathcal{A}_1$ and $\mathcal{A}_2$ must act by $0$, and therefore $(\lambda,\lambda') = (0,0)$.  The action of $\textnormal{id}_{\chi}$ and $\textnormal{id}_{\chi^s}$ show that if $\langle v\rangle_C$ denotes the underlying space of $\mu_{0,0,0,0}$, then $\langle v\otimes \textnormal{id}_{\chi}\rangle_C \cong \mu_0$ and $\langle v\otimes \textnormal{id}_{\chi^s}\rangle_C \cong \mu_1$ as $\hh_C(G,\gamma_\chi)$-modules.
\end{proof}

\begin{lemma}
 The (isomorphism class of the) representation $\mu_{\lambda,\lambda',\sqrt{\lambda},\sqrt{\lambda'}}\otimes_{\mathcal{Z}_{\gamma_\chi}[\mathcal{A}_1,\mathcal{A}_2]}\hh_C(G,\gamma_\chi)$ is independent of the choice of square roots $\sqrt{\lambda}, \sqrt{\lambda'}$, and $\sqrt{\zeta(-1)}$.
\end{lemma}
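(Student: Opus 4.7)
The plan is to follow the argument from the previous lemma: exhibit, inside the induced module, an explicit $\mathcal{Z}_{\gamma_\chi}[\mathcal{A}_1,\mathcal{A}_2]$-submodule whose structure is $\mu_{\lambda,\lambda',-\sqrt{\lambda},-\sqrt{\lambda'}}$, then invoke Frobenius reciprocity together with simplicity of both induced modules. A crucial preliminary observation is that the constraint $\lambda\lambda' = 0$ (forced by $\mathcal{A}_1\mathcal{A}_2 = 0$) means at least one of $\sqrt{\lambda},\sqrt{\lambda'}$ is $0$, so flipping the signs of both square roots simultaneously coincides with flipping the single nonzero one; this reduces the four possible sign choices to the pair we produce directly. (In the case $(\lambda,\lambda') = (0,0)$ both square roots are forced to be $0$ and the lemma is vacuous.)

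The main computational step is to use the matrix presentation of Corollary \ref{twistmatalg}, in which $\textnormal{id}_\chi,\textnormal{id}_{\chi^s}$ are the diagonal idempotents and $\mathcal{A}_1,\mathcal{A}_2$ are off-diagonal, to derive the swap identities
$$\textnormal{id}_\chi \cdot \mathcal{A}_j \;=\; \mathcal{A}_j \cdot \textnormal{id}_{\chi^s}, \qquad \textnormal{id}_{\chi^s}\cdot \mathcal{A}_j \;=\; \mathcal{A}_j\cdot \textnormal{id}_{\chi}, \qquad j\in\{1,2\}.$$
If $\langle v\rangle_C$ spans $\mu_{\lambda,\lambda',\sqrt{\lambda},\sqrt{\lambda'}}$, then on the basis $\{v\otimes\textnormal{id}_\chi,\; v\otimes\textnormal{id}_{\chi^s}\}$ of the induced module these identities give
$$(v\otimes\textnormal{id}_{\chi}) \cdot \mathcal{A}_1 \;=\; \sqrt{\lambda}\,(v\otimes\textnormal{id}_{\chi^s}), \qquad (v\otimes\textnormal{id}_{\chi^s})\cdot \mathcal{A}_1 \;=\; \sqrt{\lambda}\,(v\otimes\textnormal{id}_\chi),$$
and similarly for $\mathcal{A}_2$ with $\sqrt{\lambda'}$ in place of $\sqrt{\lambda}$. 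Setting $w := v\otimes(\textnormal{id}_\chi - \textnormal{id}_{\chi^s})$, one then obtains $w\cdot\mathcal{A}_1 = -\sqrt{\lambda}\,w$ and $w\cdot\mathcal{A}_2 = -\sqrt{\lambda'}\,w$, so the line $\langle w\rangle_C$ is a $\mathcal{Z}_{\gamma_\chi}[\mathcal{A}_1,\mathcal{A}_2]$-submodule isomorphic to $\mu_{\lambda,\lambda',-\sqrt{\lambda},-\sqrt{\lambda'}}$.

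Applying Frobenius reciprocity as in the proof of the previous lemma then produces a nonzero $\hh_C(G,\gamma_\chi)$-equivariant map
$$\mu_{\lambda,\lambda',-\sqrt{\lambda},-\sqrt{\lambda'}}\otimes_{\mathcal{Z}_{\gamma_\chi}[\mathcal{A}_1,\mathcal{A}_2]}\hh_C(G,\gamma_\chi) \;\longrightarrow\; \mu_{\lambda,\lambda',\sqrt{\lambda},\sqrt{\lambda'}}\otimes_{\mathcal{Z}_{\gamma_\chi}[\mathcal{A}_1,\mathcal{A}_2]}\hh_C(G,\gamma_\chi),$$
which by the simplicity guaranteed by the preceding proposition (we are in the case $(\lambda,\lambda')\neq (0,0)$) is forced to be an isomorphism. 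I do not anticipate a serious obstacle: once the swap identities are recorded, the argument is a direct transcription of the analogous statement for a single square root. The only subtle point is the bookkeeping around the reduction from four sign choices to two, which is built into the relation $\lambda\lambda' = 0$.
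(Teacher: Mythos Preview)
Your proposal is correct and follows essentially the same approach as the paper: both arguments single out the vector $v\otimes(\textnormal{id}_\chi - \textnormal{id}_{\chi^s})$ inside the induced module, verify it spans a copy of $\mu$ with the flipped square root, and conclude via Frobenius reciprocity and simplicity. The paper simply treats the case $\lambda' = 0,\ \lambda\neq 0$ and declares the other case similar, whereas you package both at once via the observation that $\lambda\lambda' = 0$ collapses the four sign choices to two; your added detail on the swap identities is also correct and makes explicit what the paper leaves implicit.
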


\begin{proof}
The proof is similar to the proof of Lemma \ref{Mlambda}.  
\end{proof}

Given this lemma, we can define $M(\lambda,\lambda') := \mu_{\lambda,\lambda',\sqrt{\lambda},\sqrt{\lambda'}}\otimes_{\mathcal{Z}_{\gamma_\chi}[\mathcal{A}_1,\mathcal{A}_2]}\hh_C(G,\gamma_\chi)$.  By examining central characters, we see that the modules $M(\lambda,\lambda')$ are pairwise nonisomorphic for distinct pairs $(\lambda,\lambda')$.

\begin{thm}\label{thmneqiw}
 Assume $\ch(C) = p$.  Every finite-dimensional simple right $\hh_C(G,\gamma_\chi)$-module is isomorphic to either a character $\mu_0$ or $\mu_1$, or a module of the form $M(\lambda,\lambda')$ with $\lambda\lambda' = 0, (\lambda,\lambda')\neq (0,0)$.  
\end{thm}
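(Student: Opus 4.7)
The plan is to mirror the strategy used in the $\textnormal{char}(C) \neq p$ case treated just above. Let $M$ be a nonzero finite-dimensional simple right $\hh_C(G,\gamma_\chi)$-module. The first observation is that the subalgebra $\mathcal{Z}_{\gamma_\chi}[\mathcal{A}_1,\mathcal{A}_2]$ is commutative: $\mathcal{Z}_{\gamma_\chi}$ commutes with everything by definition, the squares $\mathcal{A}_1^2 = \mT_{\alpha,\chi}+\mT_{\alpha^{-1},\chi^s}$ and $\mathcal{A}_2^2 = \mT_{\alpha^{-1},\chi}+\mT_{\alpha,\chi^s}$ lie in $\mathcal{Z}_{\gamma_\chi}$ (they correspond to $h(x,y)=x$ and $h(x,y)=y$ in the description of the center from Corollary \ref{twistmatalg}), and $\mathcal{A}_1\mathcal{A}_2 = \mathcal{A}_2\mathcal{A}_1 = 0$ follows from Proposition \ref{mod2}.

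Next I would produce a simultaneous eigenvector for $\mathcal{Z}_{\gamma_\chi}[\mathcal{A}_1,\mathcal{A}_2]$ inside $M$. Since $C$ is algebraically closed and $M|_{\mathcal{Z}_{\gamma_\chi}[\mathcal{A}_1,\mathcal{A}_2]}$ is finite-dimensional, this is standard. Let $v \in M$ be such an eigenvector, and denote by $\sqrt{\lambda},\sqrt{\lambda'}\in C$ the eigenvalues of $\mathcal{A}_1,\mathcal{A}_2$. Set $\lambda = (\sqrt{\lambda})^2$ and $\lambda' = (\sqrt{\lambda'})^2$; the relation $\mathcal{A}_1\mathcal{A}_2 = 0$ forces $\lambda\lambda' = 0$. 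Reading off the eigenvalues of $\mathcal{A}_1^2$ and $\mathcal{A}_2^2$ on $v$, we see $\langle v\rangle_C \cong \mu_{\lambda,\lambda',\sqrt{\lambda},\sqrt{\lambda'}}$ as $\mathcal{Z}_{\gamma_\chi}[\mathcal{A}_1,\mathcal{A}_2]$-modules.

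Frobenius reciprocity then yields
$$\{0\}\ \neq\ \textnormal{Hom}_{\mathcal{Z}_{\gamma_\chi}[\mathcal{A}_1,\mathcal{A}_2]}\bigl(\mu_{\lambda,\lambda',\sqrt{\lambda},\sqrt{\lambda'}},\, M|_{\mathcal{Z}_{\gamma_\chi}[\mathcal{A}_1,\mathcal{A}_2]}\bigr)\ \cong\ \textnormal{Hom}_{\hh_C(G,\gamma_\chi)}\bigl(M(\lambda,\lambda'),\, M\bigr),$$
where I have used that $\hh_C(G,\gamma_\chi)$ is free of rank two over $\mathcal{Z}_{\gamma_\chi}[\mathcal{A}_1,\mathcal{A}_2]$. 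Any nonzero element of the right-hand side is surjective by simplicity of $M$. If $(\lambda,\lambda')\neq (0,0)$, the preceding proposition shows $M(\lambda,\lambda')$ is itself simple, so the surjection is an isomorphism. If $(\lambda,\lambda') = (0,0)$, the same proposition gives $M(0,0)\cong \mu_0\oplus\mu_1$, so the simple quotient $M$ must be isomorphic to $\mu_0$ or $\mu_1$.

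The main conceptual step is the identification of the commutative subalgebra $\mathcal{Z}_{\gamma_\chi}[\mathcal{A}_1,\mathcal{A}_2]$ that controls the representation theory; once this is in hand, the rest is a routine induced-character argument, and the only case to keep track of is the degeneracy $(\lambda,\lambda')=(0,0)$ which accounts precisely for the two exceptional characters. No new calculation is required beyond those already performed in Propositions \ref{mod}, \ref{mod2} and Corollary \ref{twistmatalg}.
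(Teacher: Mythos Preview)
Your proof is correct and follows essentially the same approach as the paper: find a character of the commutative subalgebra $\mathcal{Z}_{\gamma_\chi}[\mathcal{A}_1,\mathcal{A}_2]$ inside $M$, then use Frobenius reciprocity (via freeness of rank two) to obtain a nonzero map $M(\lambda,\lambda')\to M$, and conclude by simplicity. The only cosmetic difference is your handling of the degenerate case: the paper assumes at the outset that $M$ is not a character and derives a contradiction from $(\lambda,\lambda')=(0,0)$, whereas you allow this case and deduce $M\cong\mu_0$ or $\mu_1$ from the decomposition $M(0,0)\cong\mu_0\oplus\mu_1$; both are equivalent.
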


\begin{proof}
Again, the proof is similar to the prime-to-$p$ case (cf. Theorem \ref{charCneq0reg}).  
\end{proof}

We conclude with one final definition.

\begin{defn}\label{append}
 Let $\chi:H\rightarrow C^\times$ be an arbitrary character, and let $M$ be a finite-dimensional simple module for $\hh_C(G,\gamma_\chi)$.  We append $\chi$ to the list of parameters of $M$, and use this notation to denote the corresponding module for $\hh_C(G,I(1))$, via the decomposition of Proposition \ref{orbits}.
\end{defn}

\begin{remark}
 The isomorphism in Corollary \ref{twistmatalg} depends on the \emph{ordered} pair $(\chi,\chi^s)$.  There is an obvious isomorphism of algebras $\hh_C(G,\chi\oplus\chi^s)\cong \hh_C(G,\chi^s\oplus\chi)$, which identifies simple modules.  In particular, the isomorphism gives $M(\lambda,\chi)\cong M(q^4\lambda^{-1},\chi^s)$ if $\ch(C)\neq p$, and  $\mu_{0,\chi}\cong\mu_{1,\chi^s},\ \mu_{1,\chi}\cong\mu_{0,\chi^s},\ M(\lambda,\lambda',\chi)\cong M(\lambda',\lambda,\chi^s)$ if $\ch(C) = p$.  
\end{remark}

\section{Principal Series and Supersingular Modules}\label{ssingmodssec}

\subsection{Principal Series} 
We shall assume from this point onwards that $C = \ol{\mathbb{F}}_p$, and that all representations are smooth $\ol{\mathbb{F}}_p$-representations.  We call such representations \emph{mod-$p$} or \emph{modular} representations.  In an attempt to understand supersingular representations of $G$ (cf. Introduction), we will make use of the functor sending a smooth representation $\pi$ to $\pi^{I(1)}$, called the functor of $I(1)$-invariants.  By Lemma 3(1) of \cite{BL94}, if $\pi$ is a nonzero smooth representation of $G$, then the $\hh_{\ol{\mathbb{F}}_p}(G,I(1))$-module $\pi^{I(1)}$ will also be nonzero.

Let $\widetilde{\chi} = \widetilde{\zeta}\otimes\widetilde{\eta}$ be a smooth character of the full torus $T$ of $G$, and consider the principal series representation $\textrm{ind}_B^G(\widetilde{\chi})$, where $B$ is the standard upper Borel subgroup of $G$, and $\widetilde{\zeta}$ and $\widetilde{\eta}$ are characters of $E^\times$ and $\textnormal{U}(1)(E/F)$, respectively.  In Th\'eor\`eme 3.8 of \cite{Ab14a}, Abdellatif has shown that $\textnormal{ind}_B^G(\widetilde{\chi})$ is reducible if and only if $\widetilde{\chi} = \widetilde{\eta}\circ\det$, in which case we have a nonsplit short exact sequence 
$$0 \rightarrow \widetilde{\eta}\circ\det \rightarrow \textnormal{ind}_B^G(\widetilde{\eta}\circ\det)\rightarrow \widetilde{\eta}\circ\det\otimes \textnormal{St}_G\rightarrow 0,$$ 
where $\textnormal{St}_G = \textnormal{ind}_B^G(1)/1$ is the (irreducible) Steinberg representation of $G$.

The Bruhat decomposition applied to $K$ and the Iwasawa decomposition together imply that $$G = BI \sqcup Bn_sI = BI(1) \sqcup Bn_sI(1).$$  Therefore, we may take as a basis $\textrm{ind}_B^G(\widetilde{\chi})^{I(1)}$ the functions $\{f_1,f_2\}$, defined by 
\begin{center}
 \begin{tabular}{ll}
  $f_1(1) = 1,$ & $f_1(n_s) = 0,$\\
  $f_2(1) = 0,$ & $f_2(n_s) = 1;$
 \end{tabular}
\end{center}
the function $f_1$ is the unique $I(1)$-invariant function with support $BI(1)$ taking the value $1$ at the identity (likewise for $f_2$, supported in $Bn_sI(1)$).

Since $T_1$ is a pro-$p$ subgroup, the restriction of $\widetilde{\chi}$ to $T_1$ must be trivial.  Let $\chi$ denote the representation of $H = T_0/T_1$, given by restricting $\widetilde{\chi}$ to $T_0$.  The action of $\hh_{\ol{\mathbb{F}}_p}(G,I(1))$ on $\textnormal{ind}_B^G(\widetilde{\chi})^{I(1)}$ will depend on the character $\chi$.  If $\chi^s = \chi$, then in the notation of Lemma \ref{decomp} we have 
$$\textrm{ind}_B^G(\widetilde{\chi})^{I(1)} = \textrm{ind}_B^G(\widetilde{\chi})^{I,\chi},$$ 
and the action of $\hh_{\ol{\mathbb{F}}_p}(G,I(1))$ factors through algebra $\hh_{\ol{\mathbb{F}}_p}(G,\chi)$ (via the decomposition of Proposition \ref{orbits}).  Likewise, if $\chi^s\neq\chi$, then 
$$\textrm{ind}_B^G(\widetilde{\chi})^{I(1)} = \textrm{ind}_B^G(\widetilde{\chi})^{I,\chi\oplus\chi^s} = \textrm{ind}_B^G(\widetilde{\chi})^{I,\chi}\oplus\textrm{ind}_B^G(\widetilde{\chi})^{I,\chi^s},$$ 
and the action of $\hh_{\ol{\mathbb{F}}_p}(G,I(1))$ factors through $\hh_{\ol{\mathbb{F}}_p}(G,\chi\oplus\chi^s)$.

\begin{thm}\label{ps} The algebra $\hh_{\ol{\mathbb{F}}_p}(G,I(1))$ acts on $\langle f_1, f_2\rangle_{\ol{\mathbb{F}}_p}$ in the following way:
 \begin{enumerate}[(i)]
  \item If $\chi$ is of trivial type, then
\begin{center}
 \begin{tabular}{cclccclccclcccl}
  $f_1\cdot e_{\chi}$ & = & $f_1,$ & & $f_1\cdot e_{\chi'}$& = & $0,$ & & $f_1\cdot \T_{n_s}$ & = & $\phantom{-}f_2,$ & & $f_1\cdot \T_{n_{s'}}$ & = & $-f_1$\\
  $f_2\cdot e_{\chi}$ & = & $f_2,$ & & $f_2\cdot e_{\chi'}$& = & $0,$ & & $f_2\cdot \T_{n_s}$ & = & $-f_2,$ & & $f_2\cdot \T_{n_{s'}}$ & = & $\widetilde{\chi}(\alpha)f_1,$
 \end{tabular}
\end{center}
for $\chi'\neq \chi$.  
   \item If $\chi$ is hybrid, then 
\begin{center}
 \begin{tabular}{cclccclccclcccl}
  $f_1\cdot e_{\chi}$ & = & $f_1,$ & & $f_1\cdot e_{\chi'}$& = & $0,$ & & $f_1\cdot \T_{n_s}$ & = & $f_2,$ & & $f_1\cdot \T_{n_{s'}}$ & = & $-f_1$\\
  $f_2\cdot e_{\chi}$ & = & $f_2,$ & & $f_2\cdot e_{\chi'}$& = & $0,$ & & $f_2\cdot \T_{n_s}$ & = & $0,$ & & $f_2\cdot \T_{n_{s'}}$ & = & $\widetilde{\chi}(\alpha)f_1,$
 \end{tabular}
\end{center}
for $\chi' \neq \chi$.  
   \item If $\chi$ is regular, then
\begin{center}
 \begin{tabular}{cclccclccclcccc}
  $f_1\cdot e_{\chi}$ & = & $f_1,$ & & $f_1\cdot e_{\chi'}$& = & $0,$ & & $f_1\cdot \T_{n_s}$ & = & $f_2,$ & & $f_1\cdot \T_{n_{s'}}$ & = & $0$\\
  $f_2\cdot e_{\chi^s}$ & = & $f_2,$ & & $f_2\cdot e_{\chi''}$& = & $0,$ & & $f_2\cdot \T_{n_s}$ & = & $0,$ & & $f_2\cdot \T_{n_{s'}}$ & = & $\widetilde{\zeta}(-1)\widetilde{\chi}(\alpha)f_1,$
 \end{tabular}
\end{center}
for $\chi' \neq \chi,~\chi'' \neq \chi^s$.  
 \end{enumerate}
\end{thm}

\begin{proof}
 See Appendix \ref{app}.
\end{proof}

\begin{cor}\label{psmod} In the notation of Definition \ref{append}, the $\hh_{\ol{\mathbb{F}}_p}(G,I(1))$-module $\textnormal{ind}_B^G(\widetilde{\chi})^{I(1)}$ is given by the following:
 \begin{enumerate}[(i)]
  \item Assume $\chi$ is of trivial type.  Then $\textnormal{ind}_B^G(\widetilde{\chi})^{I(1)}\cong M(\widetilde{\chi}(\alpha),\chi)$ as right $\hh_{\ol{\mathbb{F}}_p}(G,I(1))$-modules.
  \item Assume $\chi$ is hybrid.  Then $\textnormal{ind}_B^G(\widetilde{\chi})^{I(1)}\cong M(\widetilde{\chi}(\alpha),\chi)$ as right $\hh_{\ol{\mathbb{F}}_p}(G,I(1))$-modules.
  \item Assume $\chi$ is regular.  Then $\textnormal{ind}_B^G(\widetilde{\chi})^{I(1)}\cong M(0,\widetilde{\chi}(\alpha),\chi)$ as right $\hh_{\ol{\mathbb{F}}_p}(G,I(1))$-modules.
 \end{enumerate}
\end{cor}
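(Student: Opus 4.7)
The plan is to read Corollary \ref{psmod} directly off Theorem \ref{ps}. In each case, the action of the idempotents in Theorem \ref{ps}, together with Proposition \ref{orbits}, shows that the $\hh_{\ol{\mathbb{F}}_p}(G,I(1))$-action on $\langle f_1, f_2\rangle_{\ol{\mathbb{F}}_p}$ factors through the block $\hh_{\ol{\mathbb{F}}_p}(G, \gamma_{\varepsilon^*})$, and since $f_1 \cdot \T_{n_s} = f_2$ the module is genuinely two-dimensional. It therefore belongs to the appropriate family of modules classified in Theorems \ref{thmiw}, \ref{thmiw2}, or \ref{thmneqiw}, and it remains only to identify its parameters.

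In cases (i) and (ii) the family $M(\lambda)$ is one-parameter. Because $q \equiv 0$ in $\ol{\mathbb{F}}_p$, the defining formulas for $M(\lambda)$ collapse, and the most direct route is to exhibit the isomorphism by an explicit change of basis: setting $v_1 := f_2$ and $v_2 := \varepsilon(\alpha) f_1$, a straightforward comparison with Theorem \ref{ps}(i)--(ii) shows that all four actions of $\T_{n_s}$ and $\T_{n_{s'}}$ agree with those in the definition of $M(\lambda)$ precisely when $\lambda = \varepsilon(\alpha)$. This yields $\textnormal{ind}_B^G(\varepsilon)^{I(1)} \cong M(\varepsilon(\alpha), \varepsilon^*)$.

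In case (iii) the parameters are a pair $(\lambda, \lambda')$, which by definition are the eigenvalues of $\mathcal{A}_1^2 = \T_{\alpha,\varepsilon^*} + \T_{\alpha^{-1},(\varepsilon^*)^s}$ and $\mathcal{A}_2^2 = \T_{\alpha^{-1},\varepsilon^*} + \T_{\alpha,(\varepsilon^*)^s}$ introduced before Theorem \ref{thmneqiw}. Using Proposition \ref{mod2} together with the identifications $\mS_{0,\varepsilon^*} = \T_{n_s} e_{\varepsilon^*}$ and $\mS_{-1,(\varepsilon^*)^s} = \T_{n_{s'}} e_{(\varepsilon^*)^s}$, one verifies that the two central generators listed in Theorem \ref{propstr}(iii) are, in the stated order, precisely $\mathcal{A}_1^2$ and $\mathcal{A}_2^2$. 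Applying them to $f_1$ via Theorem \ref{ps}(iii), the first annihilates $f_1$ (because $f_1 \cdot \T_{n_{s'}} = 0$ and the other summand projects onto the wrong isotypic component), while the second acts by $\zeta(-1)\widetilde{\zeta}(-1)\varepsilon(\alpha) = \varepsilon(\alpha)$, using $\zeta(-1)^2 = 1$. This gives $(\lambda, \lambda') = (0, \varepsilon(\alpha))$, and hence $\textnormal{ind}_B^G(\varepsilon)^{I(1)} \cong M(0, \varepsilon(\alpha), \varepsilon^*)$.

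The main obstacle is the bookkeeping in case (iii): correctly matching the two central generators from Theorem \ref{propstr}(iii) with $\mathcal{A}_1^2$ and $\mathcal{A}_2^2$ in the order dictated by the conventions of Corollary \ref{twistmatalg}, so that the vanishing parameter lands in the first slot of $M(\lambda, \lambda', \varepsilon^*)$. This reduces to a careful sign- and index-tracking through Proposition \ref{mod2}, but once the dictionary is fixed the remaining computations are immediate.
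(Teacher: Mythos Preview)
Your approach to cases (i) and (ii) via the explicit change of basis $v_1 = f_2$, $v_2 = \varepsilon(\alpha)f_1$ is correct and in fact more economical than the paper's argument. The paper instead establishes simplicity first (or, in (i), handles the reducible case $\varepsilon(\alpha)=1$ separately via the short exact sequence for the Steinberg) and then identifies the module through the central character and Theorems \ref{thmiw}/\ref{thmiw2}; your direct matching against the defining relations of $M(\lambda)$ works uniformly, since $M(\lambda,\varepsilon^*)$ is a well-defined two-dimensional module for every $\lambda$, simple or not.

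There is, however, a genuine gap in case (iii). You invoke the classification of Theorem \ref{thmneqiw} to place the module in the family $M(\lambda,\lambda')$, but that theorem classifies \emph{simple} modules, and ``$f_1\cdot\T_{n_s}=f_2$'' only shows two-dimensionality, not simplicity. Computing the eigenvalues of $\mathcal{A}_1^2$ and $\mathcal{A}_2^2$ tells you what the parameters would be \emph{if} the module were of the form $M(\lambda,\lambda')$, but does not by itself produce an isomorphism. The paper closes this gap by first observing that the module contains no character: the isotypic decomposition $\langle f_1\rangle_{\ol{\mathbb{F}}_p} \oplus \langle f_2\rangle_{\ol{\mathbb{F}}_p}$ under $e_{\varepsilon^*}$ and $e_{(\varepsilon^*)^s}$ forces any one-dimensional submodule to be spanned by $f_1$ or $f_2$, while Theorem \ref{ps}(iii) gives $f_1\cdot\T_{n_s}=f_2\neq 0$ and $f_2\cdot\T_{n_{s'}}=\widetilde{\zeta}(-1)\varepsilon(\alpha)f_1\neq 0$. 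Once simplicity is established, your central-character computation (which is essentially the paper's) finishes the argument. Alternatively, you could close the gap in the spirit of your treatment of (i)--(ii) by exhibiting an explicit basis realizing the induced module as $\mu_{0,\varepsilon(\alpha),0,\sqrt{\varepsilon(\alpha)}}\otimes_{\mathcal{Z}_{\gamma_{\varepsilon^*}}[\mathcal{A}_1,\mathcal{A}_2]}\hh_{\ol{\mathbb{F}}_p}(G,\gamma_{\varepsilon^*})$.
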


\subsection{Supersingular Modules}
In light of the results of the previous section, we make the following definition:

\begin{defn}\label{ssingdef}
 Let $\chi = \zeta\otimes\eta$ be a character of the finite torus $H$.  We define the following characters of $\hh_{\ol{\mathbb{F}}_p}(G,I(1))$:
\begin{enumerate}[(i)]
 \item Assume that $\chi$ is of trivial type.  We set
 \begin{center}
  \begin{tabular}{lcclccclccclcccl}
   $M_{\chi, (S,\emptyset)}$: & $e_\chi$ & $\mapsto$ & $1$, & & $e_{\chi'}$ & $\mapsto$ & $0$, & & $\T_{n_s}$ & $\mapsto$ & $\phantom{-}0$, & & $\T_{n_{s'}}$ & $\mapsto$ & $-1$;\\
   $M_{\chi, (\emptyset,S')}$: & $e_\chi$ & $\mapsto$ & $1$, & & $e_{\chi'}$ & $\mapsto$ & $0$, & & $\T_{n_s}$ & $\mapsto$ & $-1$, & & $\T_{n_{s'}}$ & $\mapsto$ & $\phantom{-}0$,
  \end{tabular}
 \end{center}
for $\chi'\neq \chi$.
 \item Assume that $\chi$ is hybrid.  We set
 \begin{center}
  \begin{tabular}{lcclccclccclcccl}
   $M_{\chi, (\emptyset, S')}$: & $e_\chi$ & $\mapsto$ & $1$, & & $e_{\chi'}$ & $\mapsto$ & $0$, & & $\T_{n_s}$ & $\mapsto$ & $0$, & & $\T_{n_{s'}}$ & $\mapsto$ & $\phantom{-}0$;\\
   $M_{\chi, (\emptyset,\emptyset)}$: & $e_\chi$ & $\mapsto$ & $1$, & & $e_{\chi'}$ & $\mapsto$ & $0$, & & $\T_{n_s}$ & $\mapsto$ & $0$, & & $\T_{n_{s'}}$ & $\mapsto$ & $-1$,
  \end{tabular}
 \end{center}
for $\chi'\neq \chi$.
  \item Assume that $\chi$ is regular.  We set
 \begin{center}
  \begin{tabular}{lcclccclccclcccl}
   $M_{\chi, (\emptyset, \emptyset)}$: & $e_\chi$ & $\mapsto$ & $1$, & & $e_{\chi'}$ & $\mapsto$ & $0$, & & $\T_{n_s}$ & $\mapsto$ & $0$, & & $\T_{n_{s'}}$ & $\mapsto$ & $0$,
  \end{tabular}
 \end{center}
for $\chi'\neq \chi$.  
\end{enumerate}
The modules defined in this way are supersingular (as defined in Definition \ref{ssingdef1}).  We will denote a generic supersingular module above by $M_{\chi,\mathbf{J}}$, where $\mathbf{J} = (J,J')$ is an ordered pair as above with $J\subset J_0(\chi), J'\subset J_0'(\chi)$.  This notation is motivated by the notation of Section \ref{repsandhmods} (cf. Definitions \ref{jandj'} and \ref{ssingmods}).
\end{defn}

Note that we have $M_{\chi,\tj} \cong M_{\chi',\tj'}$ if and only if $\chi = \chi'$ and $\tj = \tj'$.  The computations of the previous sections lead to the following corollary:

\begin{cor}
\begin{enumerate}[(i)]
 \item Let $M$ be a finite-dimensional supersingular $\hh_{\ol{\mathbb{F}}_p}(G,I(1))$-module.  Then $M\cong M_{\chi,\tj}$ for some $\chi$ and $\tj$.
 \item The functor of $I(1)$-invariants induces a bijection between (isomorphism classes of) irreducible nonsupersingular representations of $G$ and (isomorphism classes of) nonsupersingular finite-dimensional simple right $\hh_{\ol{\mathbb{F}}_p}(G,I(1))$-modules.  
 \end{enumerate}\label{maincor}
\end{cor}

\begin{proof}
 This follows from Theorems \ref{thmiw}, \ref{thmiw2}, \ref{thmneqiw}, and Corollary \ref{psmod}.
\end{proof}

\section{Representations of the Finite Hecke Algebras and (Pro)finite Groups}\label{repsandhmods}

\subsection{Finite Hecke Algebras}
We first describe the Hecke algebras for the finite groups $\Gamma = \textnormal{U}(2,1)(\mathbb{F}_{q^2}/\mathbb{F}_{q})$ and $\Gamma' = (\textnormal{U}(1,1)\times\textnormal{U}(1))(\mathbb{F}_{q^2}/\mathbb{F}_{q})$, and their associated simple modules.  

\begin{defn}
We define 
$$\hh_\Gamma := \textrm{End}_\Gamma(\textrm{ind}_\uu^\Gamma(1)),\qquad \hh_{\Gamma'} := \textrm{End}_{\Gamma'}(\textrm{ind}_{\uu'}^{\Gamma'}(1)),$$
where $1$ denotes the trivial character of $\uu$ or $\uu'$.  
\end{defn}

Extending functions by zero induces the injections of $K$- and $K'$-representations (respectively)
$$\textrm{ind}_{\uu}^{\Gamma}(1)\cong \textrm{ind}_{I(1)}^{K}(1)\hookrightarrow \textrm{c-ind}_{I(1)}^G(1)\quad \textnormal{and}\quad \textrm{ind}_{\uu'}^{\Gamma'}(1)\cong \textrm{ind}_{I(1)}^{K'}(1)\hookrightarrow \textrm{c-ind}_{I(1)}^G(1).$$  Passing to $I(1)$-invariants, we view the algebras $\hh_\Gamma$ and $\hh_{\Gamma'}$ as subalgebras of $\hh_{\ol{\mathbb{F}}_p}(G,I(1))$ via
$$\hh_\Gamma \hookrightarrow \textrm{Hom}_K(\textrm{ind}_{I(1)}^{K}(1),\textrm{c-ind}_{I(1)}^G(1)|_K) \cong \hh_{\ol{\mathbb{F}}_p}(G,I(1)),$$
$$\hh_{\Gamma'} \hookrightarrow \textrm{Hom}_{K'}(\textrm{ind}_{I(1)}^{K'}(1),\textrm{c-ind}_{I(1)}^G(1)|_{K'}) \cong \hh_{\ol{\mathbb{F}}_p}(G,I(1)).$$
We deduce from these morphisms that the algebra $\hh_{\Gamma}$ is generated by $\T_{n_s}$ and $e_\chi$ for all characters $\chi$ of $H$, while $\hh_{\Gamma'}$ is generated by $\T_{n_{s'}}$ and $e_\chi$ for all characters $\chi$ of $H$.

\begin{defn}\label{jandj'}
 Let $\chi\in\widehat{H}$ and let $S := \{s\}$ and $S' := \{s'\}$ denote the sets of Coxeter generators for the Weyl groups associated to $\Gamma$ and $\Gamma'$, respectively.  We define
\begin{center}
\begin{tabular}{ccl}
$J_0(\chi)$ & := & $\begin{cases} S & \textrm{if $\chi$ factors through the determinant},\\ \emptyset & \textrm{otherwise}, \end{cases}$ \\
$J_0'(\chi)$ & := & $\begin{cases}S' & \textrm{if} \ \chi^s = \chi,\\ \emptyset & \textrm{otherwise}. \end{cases}$
\end{tabular}
\end{center}
\end{defn}

\begin{defn}\label{ssingmods}
 Let $\chi\in\widehat{H}$.
\begin{enumerate}[(i)]
 \item Let $J\subset J_0(\chi)$, and let $M_{\chi,J}$ denote the character of $\hh_\Gamma$ given by 
\begin{center}
 \begin{tabular}{cclccclcccl}
  $e_\chi$ & $\mapsto$ & $1$, & & $e_{\chi'}$ & $\mapsto$ & $0$, & & $\T_{n_s}$ & $\mapsto$ & $\begin{cases}\phantom{-}0 & \textrm{if}\ s\in J,\\ -1 & \textrm{if}\ s\in J_0(\chi)\smallsetminus J,\\ \phantom{-}0 & \textrm{if}\ s\not\in J_0(\chi).\end{cases}$
 \end{tabular}
\end{center}
for $\chi'\neq \chi$.
 \item Let $J'\subset J_0'(\chi)$, and let $M_{\chi,J'}'$ denote the character of $\hh_{\Gamma'}$ given by 
\begin{center}
 \begin{tabular}{cclccclcccl}
  $e_\chi$ & $\mapsto$ & $1$, & & $e_{\chi'}$ & $\mapsto$ & $0$, & & $\T_{n_{s'}}$ & $\mapsto$ & $\begin{cases}\phantom{-}0 & \textrm{if}\ s'\in J',\\ -1 & \textrm{if}\ s'\in J_0'(\chi)\smallsetminus J',\\ \phantom{-}0 & \textrm{if}\ s'\not\in J_0'(\chi).\end{cases}$
 \end{tabular}
\end{center}
for $\chi'\neq \chi$.
\end{enumerate}
\end{defn}

With these definitions in place, we arrive at the following proposition.

\begin{prop}\label{finheckemods}
Let $\chi\in\widehat{H}$.
 \begin{enumerate}[(i)]
  \item Every simple right $\hh_\Gamma$-module is isomorphic to a character $M_{\chi,J}$ with $J\subset J_0(\chi)$.
  \item Every simple right $\hh_{\Gamma'}$-module is isomorphic to a character $M'_{\chi,J'}$ with $J'\subset J_0'(\chi)$.  
 \end{enumerate}
\end{prop}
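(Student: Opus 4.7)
My plan is to reduce to a single $W$-orbit via the central idempotents, and then read off the simple modules from the quadratic relations of Theorem \ref{propstr} collapsed to characteristic $p$. Since the idempotents $e_\chi$ lie in both $\hh_\Gamma$ and $\hh_{\Gamma'}$, the argument of Proposition \ref{orbits} carries over verbatim to give block decompositions $\hh_\Gamma = \bigoplus_{\gamma_\chi} \hh_\Gamma e_{\gamma_\chi}$ and similarly for $\hh_{\Gamma'}$. For a simple right $\hh_\Gamma$-module $M$ there is then a unique orbit $\gamma_\chi$ with $M = M\cdot e_{\gamma_\chi}$; the braid relation $e_\chi \T_{n_s} = \T_{n_s} e_{\chi^s}$ implies that $M\cdot e_\chi$ is stable under $\T_{n_s}$ when $\chi^s = \chi$, while in the regular case $\T_{n_s}$ swaps $M\cdot e_\chi$ and $M\cdot e_{\chi^s}$.

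I would then divide into cases on the type of $\chi$. When $\chi = \eta\circ\det$, Theorem \ref{propstr}(ii) with $q\equiv 0$ in $\ol{\mathbb{F}}_p$ reduces to $\T_{n_s}(\T_{n_s}+1)e_\chi = 0$, so $\T_{n_s}$ acts semisimply on $M$ with eigenvalues in $\{0,-1\}$; by simplicity $M$ coincides with one of the two eigenspaces, which yields precisely $M_{\chi,\{s\}}$ and $M_{\chi,\emptyset}$. In the hybrid case the same relation becomes $\T_{n_s}^2 e_\chi = 0$, so $\T_{n_s}$ acts nilpotently on $M$ and $\ker(\T_{n_s}|_M)$ is a nonzero submodule; simplicity forces $\T_{n_s}$ to act by zero, giving $M\cong M_{\chi,\emptyset}$ (noting $J_0(\chi)=\emptyset$ here). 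In the regular case the relation again gives $\T_{n_s}^2 e_\chi = 0$: picking a nonzero $v\in M\cdot e_\chi$, the element $v\cdot\T_{n_s}\in M\cdot e_{\chi^s}$ is annihilated by $\T_{n_s}$, so if $v\cdot\T_{n_s}$ were nonzero it would span a one-dimensional $\hh_\Gamma$-submodule of $M$ which by simplicity must equal all of $M$, forcing $M\cdot e_\chi = 0$ and contradicting $v\ne 0$. Hence $v\cdot\T_{n_s}=0$, $\langle v\rangle = M$, and $M\cong M_{\chi,\emptyset}$.

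The proof of part (ii) proceeds identically, but with the $\T_{n_{s'}}$-relation $\T_{n_{s'}}(\T_{n_{s'}}+1)e_\chi = 0$ holding whenever $\chi^s = \chi$ (in both the trivial and hybrid subcases) and $\T_{n_{s'}}^2 e_\chi = 0$ in the regular case; this explains why the dichotomy in part (ii) is governed by $J_0'(\chi)$, which tests $\chi^s = \chi$, rather than by $J_0(\chi)$, which tests factorization through the determinant. The main subtlety I expect is the regular case: a priori one might hope for a two-dimensional simple module weaving $M\cdot e_\chi$ to $M\cdot e_{\chi^s}$ via $\T_{n_s}$, parallel to the infinite-algebra situation of Theorem \ref{thmneqiw}; the nilpotency $\T_{n_s}^2 = 0$ in the finite setting (and the absence of any analogue of $\alpha$ keeping one inside the $\chi$-block) is precisely what forces a one-dimensional submodule to exist, ruling out such a stitched module and collapsing the classification to the list of characters in Definition \ref{ssingmods}.
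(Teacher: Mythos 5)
Your proof is correct, and it takes a genuinely different route from the paper's. The paper disposes of this Proposition in two sentences by recognizing the pairs $(\bb,(N\cap K)/(N\cap K_1))$ and $(\bb',(N\cap K')/(N\cap K_1'))$ as ``strongly split BN pairs of characteristic $p$'' in the sense of Cabanes--Enguehard, and then citing Theorem 6.10(iii) of that book, which classifies simple modules for the associated Hecke algebras in general. Your argument instead proceeds by hand: decompose $\hh_\Gamma$ (resp. $\hh_{\Gamma'}$) into blocks via the central idempotents $e_{\gamma_\chi}$ (which makes sense since these algebras are generated by $\T_{n_s}$, resp. $\T_{n_{s'}}$, together with all $e_\chi$), reduce the quadratic relations of Theorem \ref{propstr}(ii) mod $p$, and then observe that the relation $\T_{n_s}(\T_{n_s}+1)e_\chi = 0$ in the trivial case gives a semisimple operator with eigenvalues $\{0,-1\}$, while $\T_{n_s}^2 e_\chi = 0$ in the hybrid and regular cases forces $\T_{n_s}$ to kill every simple module (with the small extra computation in the regular case showing that a vector of the form $v\cdot\T_{n_s}$ cannot support a higher-dimensional simple module). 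Your case analysis also cleanly explains why the answer is indexed by $J_0(\chi)$ for $\Gamma$ but $J_0'(\chi)$ for $\Gamma'$: the $\T_{n_{s'}}$ relation degenerates to the nilpotent form only in the regular case, whereas the $\T_{n_s}$ relation degenerates already in the hybrid case. The trade-off is predictable: the citation is shorter and scales to all strongly split BN pairs, while your computation is self-contained, requires no appeal to the theory of \cite{CE04}, and exhibits the classification transparently from the algebra presentation already established in the paper.
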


\begin{proof}
 The pairs $(\bb,(N\cap K)/(N\cap K_1))$ and $(\bb',(N\cap K')/(N\cap K_1'))$ form ``strongly split BN pairs of characteristic $p$'' (cf. \cite{CE04} Definition 2.20).  The result then follows from Theorem 6.10(iii) of \emph{loc. cit.}.  \end{proof}

\subsection{Carter-Lusztig Theory}  We may now begin to classify the mod-$p$ representations of the finite groups $\Gamma$ and $\Gamma'$.  

\begin{prop}
\begin{enumerate}[(i)]
 \item The  functor $\rho\mapsto \rho^{\uu}$ induces a bijection between isomorphism classes of irreducible representations of $\Gamma$ and isomorphism classes of simple right $\hh_{\Gamma}$-modules.  
 \item The functor $\rho'\mapsto (\rho')^{\uu'}$ induces a bijection between isomorphism classes of irreducible representations of $\Gamma'$ and isomorphism classes of simple right $\hh_{\Gamma'}$-modules. 
\end{enumerate}\label{repsmods}
\end{prop}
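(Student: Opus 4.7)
The plan is to invoke the general theorem on irreducible representations of finite groups with a strongly split BN-pair of characteristic $p$, namely Theorem 6.12 of \cite{CE04}. As observed in the proof of Proposition \ref{finheckemods}, the pairs $(\bb, (N\cap K)/(N\cap K_1))$ in $\Gamma$ and $(\bb', (N\cap K')/(N\cap K'_1))$ in $\Gamma'$ are strongly split BN-pairs of characteristic $p$, with maximal unipotent subgroups $\uu$ and $\uu'$ respectively; both bijections then follow directly from the cited theorem.

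To sketch the structure of the underlying argument for $\Gamma$ (the case of $\Gamma'$ being identical with $s$ replaced by $s'$): let $\rho$ be a nonzero irreducible mod-$p$ representation of $\Gamma$, so $\rho^\uu \neq 0$ by Proposition 26 of \cite{Se77}. To show $\rho^\uu$ is simple as a right $\hh_\Gamma$-module, first decompose $\rho^\uu = \bigoplus_{\chi \in \widehat{H}} \rho^{\bb, \chi}$ using the idempotents $e_\chi$; the quadratic relation satisfied by $\T_{n_s} e_\chi$ on each summand (Theorem \ref{propstr}(ii)), combined with the classification of Proposition \ref{finheckemods}, forces $\rho^\uu$ to live in a single $\chi$-isotypic component. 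If $N \subsetneq \rho^\uu$ were a nonzero proper $\hh_\Gamma$-submodule, the $\Gamma$-span $\ol{\mathbb{F}}_p[\Gamma]\cdot N$ would be a proper subrepresentation of $\rho$, contradicting irreducibility. Injectivity on isomorphism classes is then supplied by the Frobenius reciprocity identification $\textnormal{Hom}_\Gamma(\textnormal{ind}_\uu^\Gamma(1), \rho) \cong \rho^\uu$, which realizes $\rho$ as the unique irreducible quotient of $\textnormal{ind}_\uu^\Gamma(1) \otimes_{\hh_\Gamma} \rho^\uu$.

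For surjectivity, each simple $\hh_\Gamma$-module $M_{\chi,J}$ from Proposition \ref{finheckemods} must be realized. Lift $\chi : H \to \ol{\mathbb{F}}_p^\times$ to a character of $\bb$ trivial on $\uu$ and form the principal series $\textnormal{ind}_\bb^\Gamma(\chi)$; the Bruhat decomposition $\Gamma = \bb \sqcup \bb s \bb$ yields $\dim_{\ol{\mathbb{F}}_p}\textnormal{ind}_\bb^\Gamma(\chi)^\uu = 2$, and a direct computation of the $\T_{n_s}$-action on the natural two-element basis (mirroring Theorem \ref{ps} in the finite setting) shows that the $\uu$-invariants of the irreducible subquotients of $\textnormal{ind}_\bb^\Gamma(\chi)$ exhaust the list of modules $M_{\chi,J}$ with $J \subset J_0(\chi)$.

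The main obstacle is the simplicity assertion for $\rho^\uu$: the argument that irreducibility of $\rho$ forces simplicity of $\rho^\uu$ as an $\hh_\Gamma$-module uses crucially both the classification of simple $\hh_\Gamma$-modules (Proposition \ref{finheckemods}) and the explicit quadratic relations of Theorem \ref{propstr}(ii). In particular, it is essential that every simple $\hh_\Gamma$-module be a character, so that the passage from an $\hh_\Gamma$-submodule to its $\Gamma$-span is tightly controlled. With this structural input in place, the remaining verifications reduce to bookkeeping.
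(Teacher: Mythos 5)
The paper's own proof is a single sentence: since $\hh_\Gamma$ and $\hh_{\Gamma'}$ are Frobenius algebras, the result follows from Proposition 1.25(ii) of \cite{CE04}. That abstract statement is about a projective module $P$ over a finite-dimensional algebra $A$ with $\textnormal{End}_A(P)$ self-injective, and the key conclusion it delivers is precisely that $\textnormal{Hom}_A(P,S)$ is a \emph{simple} $\textnormal{End}_A(P)$-module whenever $S$ is simple and $\textnormal{Hom}_A(P,S)\neq 0$. The Frobenius/self-injectivity hypothesis on $\hh_\Gamma$ is the indispensable ingredient; without it, $\rho^\uu$ could have length greater than one.

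Your sketch does not supply this ingredient, and the step where you would need it is exactly where your argument breaks down. You write: ``If $N \subsetneq \rho^\uu$ were a nonzero proper $\hh_\Gamma$-submodule, the $\Gamma$-span $\ol{\mathbb{F}}_p[\Gamma]\cdot N$ would be a proper subrepresentation of $\rho$, contradicting irreducibility.'' But this is a non sequitur. Since $N\neq 0$ and $\rho$ is irreducible, $\ol{\mathbb{F}}_p[\Gamma]\cdot N$ is all of $\rho$, and there is no contradiction. To extract a contradiction you would need to know that $(\ol{\mathbb{F}}_p[\Gamma]\cdot N)^\uu = N$, i.e.\ that passing to the $\Gamma$-span and taking $\uu$-invariants recovers $N$. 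That statement is false in general (its failure for $p$-adic groups is exactly why Pa\v{s}k\={u}nas and others resort to coefficient systems rather than the naive Hecke functor), and establishing it in the finite case is the entire content of the Frobenius-algebra argument. The classification of simple $\hh_\Gamma$-modules as characters (Proposition \ref{finheckemods}) and the quadratic relations of Theorem \ref{propstr}(ii) do not do this work: they tell you what the simple $\hh_\Gamma$-modules are, not that $\rho^\uu$ is one of them. Your final paragraph therefore misidentifies the ``structural input in place'' — it is the self-injectivity of $\hh_\Gamma$ (equivalently, the fact that $\textnormal{ind}_\uu^\Gamma(1)$ is projective and self-dual), not the fact that its simples are one-dimensional.

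Two smaller points. First, you cite Theorem 6.12 of \cite{CE04} rather than Proposition 1.25(ii); the paper uses Theorem 6.10(iii) for the classification of simple Hecke modules (Proposition \ref{finheckemods}) and Proposition 1.25(ii) for the present bijection, so you should verify that the reference you invoke actually asserts the bijection for strongly split BN pairs. Second, the preliminary claim that $\rho^\uu$ ``lives in a single $\chi$-isotypic component'' would follow from simplicity of $\rho^\uu$ but cannot be deduced beforehand from the quadratic relations alone; as written, the logic is circular.
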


\begin{proof}
 As $\hh_\Gamma$ and $\hh_{\Gamma'}$ are Frobenius algebras, this follows from Proposition 1.25(ii) of \cite{CE04}.  
\end{proof}

In light of this Proposition, we make the following definition:

\begin{defn}\label{defoffinreps}
Let $\chi\in\widehat{H}$.  
 \begin{enumerate}[(i)]
  \item For $J\subset J_0(\chi)$, we define $\rho_{\chi,J}$ to be the representation of $\Gamma$ such that $\rho_{\chi,J}^\uu \cong M_{\chi,J}$.
  \item For $J'\subset J_0'(\chi)$, we define $\rho_{\chi,J'}'$ to be the representation of $\Gamma'$ such that $(\rho_{\chi,J'}')^{\uu'} \cong M_{\chi,J'}'$.
 \end{enumerate}
\end{defn}

Given a nonzero irreducible mod-$p$ representation $\rho$ of $\Gamma$, we have $\rho^\uu\neq \{0\}$; Frobenius Reciprocity gives a surjection from $\textnormal{ind}_\uu^\Gamma(1)$ onto $\rho$, where $1$ denotes the trivial character of $\uu$.  Since $\textnormal{ind}_\uu^\Gamma(1)$ decomposes as a direct sum of $\textnormal{ind}_\bb^\Gamma(\chi)$, we see that $\rho$ is actually a quotient of a parabolically induced representation.  In \cite{CL76}, Carter and Lusztig show how to construct irreducible quotients of $\textnormal{ind}_\bb^\Gamma(\chi)$ by using the Hecke operators $e_\chi$ and $\T_{n_s}$ (with everything holding analogously for $\Gamma'$).  These quotients are related to the representations of Definition \ref{defoffinreps} as follows.

\begin{prop}\label{compmodsCL}
Let $\chi\in\widehat{H}$.  
 \begin{enumerate}[(i)]
  \item  If $\chi$ factors through the determinant, then $$\rho_{\chi,S} \cong \textnormal{im}(1 + \T_{n_s}:\textnormal{ind}_\bb^\Gamma(\chi)\rightarrow\textnormal{ind}_\bb^\Gamma(\chi)~),$$ $$\rho_{\chi,\emptyset} \cong \textnormal{im}(\T_{n_s}:\textnormal{ind}_\bb^\Gamma(\chi)\rightarrow\textnormal{ind}_\bb^\Gamma(\chi)~).$$
 If $\chi$ does not factor through the determinant, then $$\rho_{\chi,\emptyset} \cong \textnormal{im}(\T_{n_s}:\textnormal{ind}_\bb^\Gamma(\chi)\rightarrow\textnormal{ind}_\bb^\Gamma(\chi^s)~).$$  
  \item If $\chi^s = \chi$, then $$\rho'_{\chi,S'} \cong \textnormal{im}(1 + \T_{n_{s'}}:\textnormal{ind}_{\bb'}^{\Gamma'}(\chi)\rightarrow\textnormal{ind}_{\bb'}^{\Gamma'}(\chi)~),$$ $$\rho'_{\chi,\emptyset} \cong \textnormal{im}(\T_{n_{s'}}:\textnormal{ind}_{\bb'}^{\Gamma'}(\chi)\rightarrow\textnormal{ind}_{\bb'}^{\Gamma'}(\chi)~).$$
 If $\chi^s \neq \chi$, then $$\rho'_{\chi,\emptyset} \cong \textnormal{im}(\T_{n_{s'}}:\textnormal{ind}_{\bb'}^{\Gamma'}(\chi)\rightarrow\textnormal{ind}_{\bb'}^{\Gamma'}(\chi^s)~).$$  
 \end{enumerate}
\end{prop}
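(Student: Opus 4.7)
The proof rests on the bijection of Proposition \ref{repsmods}: to identify a $\Gamma$-subrepresentation $W$ of a parabolic induction with $\rho_{\chi,J}$, it suffices to check that $W$ is irreducible and that $W^\uu \cong M_{\chi,J}$ as a right $\hh_\Gamma$-module. The plan is therefore to compute the $\hh_\Gamma$-action on the $\uu$-invariants of the parabolic inductions, read off the image of the Hecke operators involved, and match it with the appropriate character of Definition \ref{ssingmods}.

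First I would describe the $\hh_\Gamma$-module $V_\chi := \textnormal{ind}_\bb^\Gamma(\chi)^\uu$. The Bruhat decomposition $\Gamma = \bb \sqcup \bb n_s \uu$ produces a basis $\{f_1^\chi, f_2^\chi\}$ of $V_\chi$, where $f_1^\chi$ is the $\uu$-invariant function supported on $\bb$ with $f_1^\chi(1) = 1$ and $f_2^\chi$ is supported on $\bb n_s \uu$ with $f_2^\chi(n_s) = 1$. A direct check using $n_s^2 \in T_0$ shows that $H$ acts on $f_1^\chi$ through $\chi$ and on $f_2^\chi$ through $\chi^s$. Viewing $\T_{n_s}$ as the $\Gamma$-intertwiner $\textnormal{ind}_\bb^\Gamma(\chi) \to \textnormal{ind}_\bb^\Gamma(\chi^s)$ (which is an endomorphism when $\chi^s = \chi$), its matrix in the distinguished bases is computed by enumerating the cosets in $\uu n_s \uu$ exactly as in the proof of Theorem \ref{ps}; the calculation reduces to that of the finite unipotent radical. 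In characteristic $p$ the quadratic relations of Theorem \ref{propstr} specialize to $\T_{n_s}(\T_{n_s} + 1) = 0$ on $e_\chi$ when $\chi = \eta\circ\det$, and to $\T_{n_s}^2 = 0$ on $e_\chi$ in all other cases.

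Next I would analyze each case. If $\chi = \eta\circ\det$, then $-\T_{n_s}$ and $1 + \T_{n_s}$ are orthogonal idempotents on $V_\chi$ summing to the identity, so each has one-dimensional image; on $\textnormal{im}(1+\T_{n_s})$ the operator $\T_{n_s}$ acts by $0$, matching $M_{\chi,S}$, and on $\textnormal{im}(\T_{n_s})$ it acts by $-1$, matching $M_{\chi,\emptyset}$. If $\chi$ does not factor through the determinant, then the explicit matrix shows that $\T_{n_s}(f_1^\chi)$ is a nonzero multiple of $f_2^{\chi^s}$ (its image must lie in the $\chi$-eigenline of $V_{\chi^s}$ by $H$-equivariance of $\T_{n_s}$), while $\T_{n_s}(f_2^\chi) = 0$. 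Thus the image of $\T_{n_s}$ meets the $\uu$-invariants of $\textnormal{ind}_\bb^\Gamma(\chi^s)$ in the line $\langle f_2^{\chi^s}\rangle$; since $f_1^{\chi^s}$ generates $\textnormal{ind}_\bb^\Gamma(\chi^s)$ as a $\Gamma$-representation, it cannot lie in any proper subrepresentation, so $\textnormal{im}(\T_{n_s})^\uu = \langle f_2^{\chi^s}\rangle$. This line carries the $H$-character $\chi^{ss} = \chi$, and $\T_{n_s}$ acts on it by $0$, so $\textnormal{im}(\T_{n_s})^\uu \cong M_{\chi,\emptyset}$ as an $\hh_\Gamma$-module.

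Irreducibility of each image is extracted from \cite{CL76}, Proposition 3.18, where Carter and Lusztig construct the irreducible modular representations of an arbitrary finite group with a split BN-pair as the images of precisely these Hecke operators applied to parabolic inductions; this, combined with the identification of $\uu$-invariants above and the bijection of Proposition \ref{repsmods}, identifies each image with the corresponding $\rho_{\chi,J}$ of Definition \ref{defoffinreps}. Part (ii) is then proved by the same argument with $(\Gamma, \uu, \bb, n_s)$ replaced by $(\Gamma', \uu', \bb', n_{s'})$: the group $\Gamma'$ is of semisimple rank one with the same-shape Bruhat decomposition, and the quadratic relation $\T_{n_{s'}}(\T_{n_{s'}} + 1) = 0$ (when $\chi^s = \chi$) respectively $\T_{n_{s'}}^2 = 0$ (when $\chi^s \neq \chi$) specializes in the analogous way. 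The main obstacle is the careful bookkeeping of characters and signs when computing the action of $\T_{n_s}$ on the basis $\{f_1^\chi, f_2^\chi\}$ and correctly identifying $H$-eigenlines across the two parabolic inductions $\textnormal{ind}_\bb^\Gamma(\chi)$ and $\textnormal{ind}_\bb^\Gamma(\chi^s)$; the computation itself is routine but must be executed without error for the final match with $M_{\chi,J}$ to go through.
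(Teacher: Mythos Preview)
Your approach is essentially the same as the paper's: both invoke Carter--Lusztig for the irreducibility of the images and then identify each image with the correct $\rho_{\chi,J}$ by computing the $\hh_\Gamma$-action on its $\uu$-invariants and matching with Definition \ref{defoffinreps}. The paper simply cites \cite{CL76} (Theorem 7.1 and Proposition 6.6) for the Hecke action on the $\uu$-invariants of the image representations rather than computing it by hand, whereas you work out the two-dimensional module $V_\chi$ explicitly; your more hands-on route is fine and mirrors the appendix computation for Theorem \ref{ps}.

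One correction: for irreducibility (and pairwise inequivalence) of the images you should cite Theorem 7.1 and Corollary 7.5 of \cite{CL76}, not Proposition 3.18. Proposition 3.18 of \cite{CL76} is the source of the quadratic Hecke relations (cf.\ the proof of Theorem \ref{propstr}(ii)), not of the irreducibility statement you need. Also, in the non-determinant case your argument that $\textnormal{im}(\T_{n_s})^\uu = \langle f_2^{\chi^s}\rangle$ tacitly uses that $\textnormal{im}(\T_{n_s})$ is a \emph{proper} subrepresentation of $\textnormal{ind}_\bb^\Gamma(\chi^s)$; this is immediate from $\T_{n_s}^2 e_\chi = 0$ in characteristic $p$ (Theorem \ref{propstr}(ii)), but you should say so explicitly before invoking the fact that $f_1^{\chi^s}$ generates the full induction.
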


\begin{proof}
 Theorem 7.1 and Corollary 7.5 of \cite{CL76} imply that the images of the Hecke operators are irreducible and inequivalent; it therefore suffices to match the two sets of representations.  Theorem 7.1 and Proposition 6.6 of \emph{loc. cit.} give the action of $\hh_\Gamma$ and $\hh_{\Gamma'}$ on the $\uu$- and $\uu'$-invariants of the image representations.  The claim then follows from Proposition \ref{finheckemods} and Definition \ref{defoffinreps}.  
\end{proof}

We record one final result regarding the constituents of $\textnormal{ind}_{\bb'}^{\Gamma'}(\chi)$, which will be of use later.

\begin{lemma}\label{dimlemma} Let $\chi\in\widehat{H}$.  
\begin{enumerate}
 \item Assume $\chi^s = \chi$.  Then $e_\chi(1 + \T_{n_{s'}})e_\chi$ and $-e_\chi \T_{n_{s'}}e_\chi$ are orthogonal idempotents, and induce a splitting $$\textnormal{ind}_{\bb'}^{\Gamma'}(\chi)\cong \rho'_{\chi,S'}\oplus\rho'_{\chi,\emptyset}.$$  
Here $\rho'_{\chi,S'}$ is one-dimensional, and $\rho'_{\chi,\emptyset}$ is a twist of the Steinberg representation of $\Gamma'$. 
 \item Assume $\chi^s \neq \chi$.  Then the complex 
$$0 \rightarrow \rho_{\chi^s,\emptyset}' \rightarrow \textnormal{ind}_{\bb'}^{\Gamma'}(\chi) \stackrel{\T_{n_{s'}}}{\rightarrow} \rho_{\chi,\emptyset}' \rightarrow 0$$
 is exact if and only if $q = p$.  In this case, the sequence is nonsplit.  
\end{enumerate}
\end{lemma}

\begin{proof}
 Part (i) follows exactly as in Lemma 3.7 of \cite{Pas04}.  For part (ii), we use an alternate classification of irreducible representations of the group $\Gamma' \cong (\textnormal{U}(1,1)\times \textnormal{U}(1))(\mathbb{F}_{q^2}/\mathbb{F}_q)$ in terms of highest weight modules (cf. \cite{He09}, \cite{Hum76} or \cite{Ste68}). In this description, we have 
$$\rho_{\chi,\emptyset}'\cong \textrm{Sym}^{j_0}(\ol{\mathbb{F}}_p^2)\otimes\cdots\otimes\textrm{Sym}^{j_{f-1}}(\ol{\mathbb{F}}_p^2)^{\textrm{Fr}^{f-1}}\otimes(\sideset{}{^\star}\det)^k\otimes \omega^c,$$ 
for some $0\leq j_i \leq p - 1$, $0\leq k,c  < q + 1$, where $\det^\star$ denotes the determinant map of $\textnormal{U}(1,1)(\mathbb{F}_{q^2}/\mathbb{F}_q)$, and where $\omega:\textnormal{U}(1)(\mathbb{F}_{q^2}/\mathbb{F}_q)\hookrightarrow \mathbb{F}_{q^2}^\times\stackrel{\iota}{\rightarrow}\ol{\mathbb{F}}_p^\times$.  Consequently, this gives 
$$\rho_{\chi^s,\emptyset}'\cong \textrm{Sym}^{p - 1 - j_0}(\ol{\mathbb{F}}_p^2)\otimes\cdots\otimes\textrm{Sym}^{p - 1 - j_{f-1}}(\ol{\mathbb{F}}_p^2)^{\textrm{Fr}^{f-1}}\otimes(\sideset{}{^\star}\det)^{1 + k + \sum_{i = 0}^{f - 1}j_ip^i}\otimes \omega^c.$$ 
Hence, 
$$\dim_{\ol{\mathbb{F}}_p}(\rho_{\chi,\emptyset}') + \dim_{\ol{\mathbb{F}}_p}(\rho_{\chi^s,\emptyset}') = \prod_{i = 0}^{f-1}(j_i + 1) + \prod_{i = 0}^{f-1}(p - j_i);$$  
this quantity is equal to $q + 1 = \dim_{\ol{\mathbb{F}}_p}(\textrm{ind}_{\bb'}^{\Gamma'}(\chi))$ if and only if $q = p$.  Theorem 7.4 of \cite{CL76} implies that for $q = p$, the exact sequence is nonsplit.
\end{proof}

\subsection{Injective Envelopes}
We begin with some general remarks.  Given any irreducible mod-$p$ representation $\rho$ of $\Gamma$, we may view it as a representation of $K$ via the projection $K\twoheadrightarrow K/K_1\cong \Gamma$.  Conversely, any smooth irreducible representation of $K$ must be of this form; this follows from Lemma 3(1) of \cite{BL94} and the fact that $K_1$ is a normal pro-$p$ subgroup of $K$.  In light of this, we shall abuse notation and henceforth identify smooth irreducible representations of $K$ and those of $\Gamma$ (and analogously for $K'$ and $\Gamma'$, and $I$ and $H$).  

We now briefly recall some results regarding socles and injective envelopes (see \cite{Se77} and \cite{Pas04} for details and definitions).  Let $\mathcal{K}$ be any finite or profinite group, and let $\rho$ be a smooth $\ol{\mathbb{F}}_p$-representation of $\mathcal{K}$.  We let $\textnormal{inj}_{\mathcal{K}}(\rho)$ and $\textnormal{soc}_{\mathcal{K}}(\rho)$ denote the injective envelope and socle of $\rho$, respectively, in the category of smooth representations of $\mathcal{K}$.  

%
%
%
%
%

\begin{lemma}
\begin{enumerate}[(i)]
\item Let $\rho$ be an irreducible representation of $K$ and let $\rho\hookrightarrow\textnormal{inj}_K(\rho)$ be an injective envelope of $\rho$.  Then
\begin{equation}\label{decom for K}
\textnormal{inj}_K(\rho)|_I \cong \bigoplus_{\chi\in \widehat{H}} \textnormal{inj}_I(\chi)^{\oplus m_{\rho,\chi}},
\end{equation}
where $m_{\rho,\chi} = \dim_{\ol{\mathbb{F}}_p}(\textnormal{Hom}_H (\chi, \textnormal{inj}_\Gamma (\rho)^{\mathbb{U}}))$.
 
\item Let $\rho'$ be an irreducible representation of $K'$ and let $\rho'\hookrightarrow\textnormal{inj}_{K'}(\rho')$ be an injective envelope of $\rho'$.  Then
\begin{equation}\label{decom for K'}
\textnormal{inj}_{K'}(\rho')|_I \cong \bigoplus_{\chi\in \widehat{H}} \textnormal{inj}_I(\chi)^{\oplus m_{\rho',\chi}},
\end{equation}
where $m_{\rho',\chi} = \dim_{\ol{\mathbb{F}}_p}(\textnormal{Hom}_H (\chi, \textnormal{inj}_{\Gamma'} (\rho')^{\mathbb{U}'}))$.
\end{enumerate}\label{decomps}

In particular, the integers $m_{\rho,\chi}$ and $m_{\rho',\chi}$ are finite for every character $\chi$ of $H$.
\end{lemma}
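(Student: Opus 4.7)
The plan is to verify that $\textnormal{inj}_K(\rho)|_I$ is itself injective in smooth $I$-representations, then invoke the structure theory of injectives in that category, and finally compute the multiplicities by inspecting socles. Since $[K:I] = [\Gamma:\bb]$ is finite, the compact induction $\textnormal{c-ind}_I^K$ coincides with ordinary induction and is exact. Being left adjoint to restriction, this forces $\textnormal{Res}_I^K$ to preserve injectives, so $\textnormal{inj}_K(\rho)|_I$ is injective in the category of smooth $\ol{\mathbb{F}}_p$-representations of $I$.

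Every simple smooth $I$-representation has trivial $I(1)$-action (as $I(1)$ is pro-$p$ and we work in characteristic $p$) and is therefore a character $\chi$ of $H = I/I(1)$. Because $I$ is compact, every smooth $I$-representation is the union of its finite-dimensional subrepresentations, so the ambient category is locally Noetherian; the standard decomposition theorem for injectives in such a category then gives
\[
\textnormal{inj}_K(\rho)|_I \cong \bigoplus_{\chi\in\widehat{H}}\textnormal{inj}_I(\chi)^{\oplus m_{\rho,\chi}},
\]
where $m_{\rho,\chi}$ is the multiplicity of $\chi$ in the socle of the left-hand side. For any smooth $I$-representation $V$, the socle coincides with $V^{I(1)}$: these invariants are nonzero whenever $V$ is (pro-$p$ acting in characteristic $p$), and Assumption \ref{ass} ensures this $H$-module is semisimple.

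To identify these multiplicities with the formula in the statement, I will prove $\textnormal{inj}_K(\rho)^{K_1} \cong \textnormal{inj}_\Gamma(\rho)$ as $\Gamma$-representations. Essentiality over $\rho$ is immediate, since every $\Gamma$-subrepresentation of $\textnormal{inj}_K(\rho)^{K_1}$ is in particular a $K$-subrepresentation of $\textnormal{inj}_K(\rho)$; $\Gamma$-injectivity follows from the fact that inflation from $\Gamma$ to $K$ is exact and left adjoint to $(-)^{K_1}$. Because $K_1 \subset I(1)$ with $I(1)/K_1 = \uu$, taking further $\uu$-invariants yields $\textnormal{inj}_K(\rho)^{I(1)} = \textnormal{inj}_\Gamma(\rho)^{\uu}$, so $m_{\rho,\chi} = \dim_{\ol{\mathbb{F}}_p}\textnormal{Hom}_H(\chi, \textnormal{inj}_\Gamma(\rho)^{\uu})$ as claimed, and the finiteness assertion is immediate from finiteness of $\Gamma$. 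Part (ii) is formally identical on replacing $(K,\Gamma,\uu,K_1)$ by $(K',\Gamma',\uu',K_1')$, using the inclusion $K_1' \subset I(1)$ (easily checked from the matrix descriptions) and the identification $I(1)/K_1' \cong \uu'$. The main technical obstacle is justifying the direct-sum decomposition of an arbitrary injective smooth $I$-representation into injective envelopes of simples; once this is granted, everything else is routine adjunction combined with the socle computation above.
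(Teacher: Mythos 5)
Your proof is correct and follows the same strategy as the paper, which simply refers to Lemma~6.19 of Pa\v{s}k\={u}nas: show restriction to $I$ preserves injectives by the finite-index adjunction, decompose the resulting injective $I$-representation into envelopes of the characters $\chi$ of $H$ using the local finiteness of the smooth category, and read off the multiplicities from the socle $(\cdot)^{I(1)}$ via the identification $\textnormal{inj}_K(\rho)^{K_1}\cong\textnormal{inj}_\Gamma(\rho)$ (and likewise with primes). The only thing I would note is that ``locally Noetherian'' can be sharpened to ``locally finite'' here, since any finitely generated smooth representation of a compact group is finite-dimensional; this makes the Matlis-type decomposition of injectives completely standard.
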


\begin{proof}
 The proof is identical to the proof of Lemma 6.19 of \cite{Pas04}.  
\end{proof}

\begin{cor}\label{finiteness of I_1 of inj}
Let $\mathcal{K}\in \{K,~ K'\}$, and let $\rho$ be a smooth representation of $\mathcal{K}$ such that $\textnormal{soc}_{\mathcal{K}}(\rho)$ is of finite length as a $\mathcal{K}$-representation.  Then the space of $I(1)$-invariants of $\textnormal{inj}_{\mathcal{K}}(\rho)$ is finite-dimensional and $\rho$ is admissible.  
\end{cor}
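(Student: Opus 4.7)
The plan is to control $\textnormal{inj}_{\mathcal{K}}(\rho)$ using Lemma \ref{decomps} and then to deduce both assertions from the embedding $\rho \hookrightarrow \textnormal{inj}_{\mathcal{K}}(\rho)$.  Since injective envelopes depend only on the socle, and $\textnormal{soc}_{\mathcal{K}}(\rho)$ is of finite length by hypothesis, I would write $\textnormal{soc}_{\mathcal{K}}(\rho) = \bigoplus_{i=1}^{n} \rho_i$ with each $\rho_i$ irreducible, so $\textnormal{inj}_{\mathcal{K}}(\rho) \cong \bigoplus_{i=1}^{n} \textnormal{inj}_{\mathcal{K}}(\rho_i)$.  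By Lemma \ref{decomps}, each summand restricted to $I$ is a direct sum of injective envelopes $\textnormal{inj}_I(\chi)$ with finite multiplicities $m_{\rho_i,\chi}$; since $H$ is finite there are only finitely many characters $\chi$, so $\textnormal{inj}_{\mathcal{K}}(\rho)|_I$ is a \emph{finite} direct sum of $\textnormal{inj}_I(\chi)$'s.

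For the first assertion I would identify $\textnormal{inj}_I(\chi)^{I(1)}$ with $\chi$ itself, hence one-dimensional.  Indeed $I(1)$ is normal in $I$ and $H = I/I(1)$ has order coprime to $p$, so the $I$-subspace $\textnormal{inj}_I(\chi)^{I(1)}$ is a semisimple $I$-representation and therefore contained in $\textnormal{soc}_I(\textnormal{inj}_I(\chi)) = \chi$; the reverse inclusion is clear because $\chi$ is trivial on $I(1)$.  Summing contributions across the finite direct sum decomposition shows that $\textnormal{inj}_{\mathcal{K}}(\rho)^{I(1)}$ is finite-dimensional, and in particular $\rho^{I(1)} \hookrightarrow \textnormal{inj}_{\mathcal{K}}(\rho)^{I(1)}$ is finite-dimensional.

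For admissibility, the idea is to restrict one step further and identify $\textnormal{inj}_I(\chi)|_{I(1)}$ with $\textnormal{inj}_{I(1)}(1)$.  The restriction functor along the open inclusion $I(1) \hookrightarrow I$ preserves injectives, since its left adjoint --- smooth induction from an open subgroup --- is exact.  Essentiality over the trivial character follows from the previous paragraph: any nonzero smooth $I(1)$-subrepresentation of $\textnormal{inj}_I(\chi)$ has nonzero $I(1)$-invariants (because $I(1)$ is pro-$p$), and these lie in the one-dimensional subspace $\chi$.  The injective envelope $\textnormal{inj}_{I(1)}(1)$ is the smooth representation $C^{\infty}(I(1), \ol{\mathbb{F}}_p)$ carrying the right regular action, which is admissible: for any open $J \subseteq I(1)$, the $J$-invariants identify with the finite-dimensional space $\ol{\mathbb{F}}_p[I(1)/J]$.

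Putting these together, $\textnormal{inj}_{\mathcal{K}}(\rho)|_{I(1)}$ is a finite direct sum of admissible smooth $I(1)$-representations and is therefore admissible; the subrepresentation $\rho|_{I(1)}$ then inherits admissibility, which upgrades to admissibility as a smooth $\mathcal{K}$-representation, since for any open $J \subseteq \mathcal{K}$ one has $\rho^J \subseteq \rho^{J \cap I(1)}$ with $J \cap I(1)$ open in $I(1)$.  The main technical obstacle is the identification $\textnormal{inj}_I(\chi)|_{I(1)} \cong \textnormal{inj}_{I(1)}(1)$, which packages together the preservation of injectivity under restriction from $I$ to the open pro-$p$ subgroup $I(1)$ and the explicit admissible realisation of the injective envelope of the trivial character of $I(1)$.
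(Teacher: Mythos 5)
Your proof is correct and follows the paper's approach for the first assertion essentially verbatim: apply Lemma \ref{decomps} to $\textnormal{inj}_{\mathcal{K}}(\rho) \cong \textnormal{inj}_{\mathcal{K}}(\textnormal{soc}_{\mathcal{K}}\rho)$ and take $I(1)$-invariants, using $\textnormal{inj}_I(\chi)^{I(1)}\cong\chi$. Where you diverge is the admissibility step: the paper simply cites Pa\v{s}k\={u}nas, Lemma 6.18, while you unpack it by showing $\textnormal{inj}_I(\chi)|_{I(1)}\cong\textnormal{inj}_{I(1)}(1)\cong C^\infty(I(1),\ol{\mathbb{F}}_p)$ (via preservation of injectivity under restriction to the open finite-index subgroup $I(1)$, together with the essentiality argument over the trivial character), then observing that $C^\infty(I(1),\ol{\mathbb{F}}_p)^J\cong\ol{\mathbb{F}}_p[I(1)/J]$ is finite-dimensional for every open $J$; this is precisely the content of the cited lemma, so your argument is self-contained where the paper outsources. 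One minor terminological slip: the left adjoint of restriction along $I(1)\hookrightarrow I$ is compact induction rather than smooth induction, though of course the two coincide here since $I(1)$ has finite index in the compact group $I$.
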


\begin{proof}
We have
$$\rho^{I(1)}\hookrightarrow \textnormal{inj}_{\mathcal{K}}(\rho)^{I(1)} \cong \textnormal{inj}_{\mathcal{K}}(\textnormal{soc}_{\mathcal{K}}(\rho))^{I(1)};$$
finite-dimensionality follows from $\textnormal{inj}_{I}(\chi)^{I(1)}\cong \chi$ and Lemma \ref{decomps}, while admissibility follows from \cite{Pas04}, Lemma 6.18.
\end{proof}

In general, it is not clear how the multiplicities in Lemma \ref{decomps} are related, although we may record one result in this direction:

\begin{lemma}\label{Karol-7}
We have $m_{\rho, \chi}=m_{\rho, \chi^s}$, and $m_{\rho', \chi}=m_{\rho', \chi^s}.$
\end{lemma}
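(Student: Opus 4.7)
The starting point is a standard translation: by double Frobenius reciprocity (first along $H \hookrightarrow \mathbb{B}$, with $\chi$ extended trivially on $\mathbb{U}$, then along $\mathbb{B} \hookrightarrow \Gamma$),
\[
m_{\rho, \chi} \;=\; \dim_{\ol{\mathbb{F}}_p}\textnormal{Hom}_{\Gamma}\bigl(\textnormal{ind}_{\mathbb{B}}^{\Gamma}(\chi),\, \textnormal{inj}_K(\rho)\bigr).
\]
Since $\textnormal{inj}_K(\rho)$ is injective as a $\Gamma$-module, the functor $\textnormal{Hom}_{\Gamma}(-, \textnormal{inj}_K(\rho))$ is exact, and so the right-hand side depends only on the class $[\textnormal{ind}_{\mathbb{B}}^{\Gamma}(\chi)]$ in the Grothendieck group $K_0(\ol{\mathbb{F}}_p[\Gamma])$. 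It therefore suffices to prove
\[
[\textnormal{ind}_{\mathbb{B}}^{\Gamma}(\chi)] \;=\; [\textnormal{ind}_{\mathbb{B}}^{\Gamma}(\chi^s)] \quad \text{in } K_0(\ol{\mathbb{F}}_p[\Gamma]).
\]

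I would route this through the opposite Borel $\mathbb{B}^-$. The image of $n_s \in K$ in $\Gamma$ normalises $H$ and conjugates $\mathbb{B}$ onto $\mathbb{B}^-$, so the translation $f \mapsto (g \mapsto f(n_s g))$ yields a $\Gamma$-equivariant isomorphism $\textnormal{ind}_{\mathbb{B}}^{\Gamma}(\chi) \cong \textnormal{ind}_{\mathbb{B}^-}^{\Gamma}(\chi^s)$; the $s$-twist on the inducing character arises precisely because conjugation by $n_s$ implements the relation $\chi^s(h) = \chi(n_s^{-1} h n_s)$ from Section~3.3. The problem thus reduces to showing $[\textnormal{ind}_{\mathbb{B}^-}^{\Gamma}(\chi^s)] = [\textnormal{ind}_{\mathbb{B}}^{\Gamma}(\chi^s)]$, i.e.\ that the class of parabolic induction of an $H$-character is independent of the choice of Borel containing $H$. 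Since classes in $K_0(\ol{\mathbb{F}}_p[\Gamma])$ are detected by Brauer characters on $p$-regular classes, I would verify this by the induced Brauer character formula: for $p$-regular $g \in \Gamma$, applying Schur--Zassenhaus inside the Borel rewrites $\textnormal{BrCh}_{\textnormal{ind}_{\mathbb{B}}^{\Gamma}(\psi)}(g)$ as a sum of Teichm\"{u}ller lifts $\widetilde{\psi}$ evaluated on the $W$-conjugates in $H$ of the semisimple part of $g$, a quantity that is manifestly symmetric in $\mathbb{B}$ versus $\mathbb{B}^-$. Chaining the two identities gives $m_{\rho, \chi} = m_{\rho, \chi^s}$.

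For $\rho'$ the same recipe applies verbatim with $(K, \Gamma, \mathbb{B}, \mathbb{U}, n_s)$ replaced by $(K', \Gamma', \mathbb{B}', \mathbb{U}', n_{s'})$. What makes this clean is that $n_{s'} \in K'$ also normalises $H$ and represents the unique nontrivial element of the finite Weyl group $W = N/T$; consequently, conjugation by $n_{s'}$ on $H$ implements the same twist $\chi \mapsto \chi^s$, and no separate ``$s'$-twist'' of characters is needed. The principal technical hurdle in the whole argument is the Brauer character computation in paragraph two, specifically upgrading the symmetry from regular semisimple elements (where it is immediate) to all $p$-regular classes via centraliser bookkeeping; but for the finite reductive groups $\Gamma$ and $\Gamma'$ at hand this is a routine application of the standard theory.
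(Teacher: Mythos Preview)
Your proposal is correct and follows essentially the same overall strategy as the paper's proof: both reduce via Frobenius reciprocity and injectivity of $\textnormal{inj}_K(\rho)$ to showing that $\textnormal{ind}_{\bb}^{\Gamma}(\chi)$ and $\textnormal{ind}_{\bb}^{\Gamma}(\chi^s)$ have the same composition factors, and both detect this by equality of Brauer characters.

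The difference lies in how that Brauer character equality is established. The paper lifts the principal series to characteristic $0$ and then invokes Ennola's explicit character tables for $\Gamma$ and $\Gamma'$ to read off that the ordinary characters of $\textnormal{ind}_{\bb}^{\Gamma}(\widetilde{\chi})$ and $\textnormal{ind}_{\bb}^{\Gamma}(\widetilde{\chi}^s)$ coincide. Your route is more structural: you pass through the opposite Borel via the conjugation isomorphism $\textnormal{ind}_{\bb}^{\Gamma}(\chi) \cong \textnormal{ind}_{\bb^-}^{\Gamma}(\chi^s)$ and then argue directly, from the induced Brauer character formula together with Schur--Zassenhaus in $\bb$, that induction of an $H$-character from either Borel gives the same class in $K_0$. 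This is exactly the general mechanism the paper acknowledges in its Remark after the lemma (citing Humphreys for split BN pairs), so your argument is in effect supplying that general proof rather than appealing to the specific tables. Your approach buys generality and avoids external character table input; the paper's approach buys concreteness and sidesteps the centraliser bookkeeping you flag for non-regular $p$-regular elements. Either way the content is the same.
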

\begin{proof}
The definition of the numbers $m_{\rho,\chi}$ and Frobenius Reciprocity give
\begin{eqnarray*}
 m_{\rho,\chi} & = & \dim_{\ol{\mathbb{F}}_p}(\textnormal{Hom}_H (\chi, \textnormal{inj}_\Gamma(\rho)^{\uu}))\\
 & = & \dim_{\ol{\mathbb{F}}_p}(\textnormal{Hom}_\bb(\chi,\textnormal{inj}_\Gamma(\rho)|_\bb))\\
 & = & \dim_{\ol{\mathbb{F}}_p}(\textnormal{Hom}_\Gamma(\textnormal{ind}_\bb^\Gamma(\chi),\textnormal{inj}_\Gamma(\rho))),
\end{eqnarray*}
which is precisely the multiplicity with which $\rho$ occurs as a composition factor of $\textnormal{ind}_\bb^\Gamma(\chi)$.  The discussion in Sections 7.2 and 9.7 of \cite{Hum06} implies that $\textnormal{ind}_\bb^\Gamma(\chi)$ and $\textnormal{ind}_\bb^\Gamma(\chi^s)$ have the same composition factors, and the result follows.  The proof for $\Gamma'$ is identical.  
\end{proof}

\begin{cor}\label{injdecomp}
Assume $q = p$.  We then have
\begin{center}
\begin{tabular}{ccll}
$\textnormal{inj}_{K'}(\rho'_{\chi, J'})|_I$ & $\cong$ & $\textnormal{inj}_I(\chi)$ & if $\chi^s=\chi$ and $J'\subset J'_0(\chi) = S'$,\\
$\textnormal{inj}_{K'}(\rho'_{\chi, \emptyset})|_I$ & $\cong$ & $\textnormal{inj}_I(\chi)\oplus \textnormal{inj}_I(\chi^s)$ & if $\chi^s\neq \chi$.
\end{tabular}
\end{center}
\end{cor}

\begin{proof}
 This follows from the decomposition \eqref{decom for K'}, the definition of the integers $m_{\rho', \chi}$ used in Lemma \ref{Karol-7}, and the description of the composition factors of $\textnormal{ind}_{\bb'}^{\Gamma'}(\chi)$ in Lemma \ref{dimlemma}.  
\end{proof}

\section{Diagrams and Coefficient Systems}\label{diagsandcoeffs}

In this section, we follow \cite{Pas04} closely and translate the language of coefficient systems and diagrams to the group $G$.  Our case is even easier to some extent, due mainly to the fact that the extended Bruhat-Tits building of $G$ coincides with the reduced Bruhat-Tits building.  We refer to \cite{Ti79} throughout for definitions.  

\subsection{Preliminaries}

Let $X$ be the reduced Bruhat-Tits building of $G$.  The building $X$ is a simplicial complex of dimension 1 (that is, a tree) on which $G$ acts.  We denote by $A$ the apartment corresponding to the maximal $F$-split subtorus of $T$.  There exist neighboring vertices $\sigma_0$ and $\sigma_0'$ in $A$ such that $\textnormal{stab}_G(\sigma_0) = K$ and $\textnormal{stab}_G(\sigma_0') = K'$ (cf. \cite{Ti79} Section 3.1.1).  The vertex $\sigma_0'$ has $q + 1$ neighboring vertices in $X$; the vertex $\sigma_0$ has $q^3 + 1$ neighboring vertices and is hyperspecial (these facts follow from Statement 3.5.4 in \emph{loc. cit.} and $|\Gamma/\bb| = q^3 + 1, |\Gamma'/\bb'| = q+1$).  We let $\tau_1$ denote the edge from $\sigma_0$ to $\sigma_0'$; we have $\textnormal{stab}_G(\tau_1) = I$.

We let $X_i, i \in \{0,1\},$ denote the set of all $i$-dimensional simplices of $X$.  The tree $X$ has a natural $G$-invariant distance function, and we denote by $X_0^e$ (resp. $X_0^o$) the set of vertices at an even (resp. odd) distance from $\sigma_0$.  One easily shows that $X_0^e$ and $X_0^o$ constitute two disjoint orbits for the action of $G$ on $X_0$, while $G$ acts transitively on $X_1$.  

Coefficient systems over $\mathbb{C}$ were first introduced in \cite{SS97} by Schneider and Stuhler, and used in the mod-$p$ setting by Pa\v{s}k\={u}nas in \cite{Pas04}.  We refer to \emph{loc. cit.} for the definitions of coefficient systems, their homology, and elementary properties (note that the results we require from \emph{loc. cit.}, Sections 5.1--5.3, hold in a more general context, and in particular for the group $G$).

\subsection{Diagrams}

We now discuss diagrams, which are simpler and easier to handle than coefficient systems.  

\begin{defn}\label{diag}
A \emph{diagram} is a quintuple $D=(D_{0}, D'_{0}, D_1, r, r')$, in which $(\rho_0, D_0)$ is a smooth representation of $K$, $(\rho'_0, D'_0)$ is a smooth representation of $K'$, $(\rho_1, D_1)$ is a smooth representation of $I$, and $r\in \textnormal{Hom}_{I}(D_1, D_0|_I)$, $r'\in \textnormal{Hom}_{I}(D_1, D'_0|_I)$.
\end{defn}

We represent a diagram pictorially as:

\centerline{
\xymatrix{
 & D_0 \\
D_1 \ar[ur]^{r}\ar[dr]_{r'}& \\
 & D_0'
}
}

\begin{defn}
A \emph{morphism} $\psi$ between two diagrams $D=(D_0, D'_{0}, D_1, r_D, r'_D)$ and $E=(E_0, E'_0, E_1, r_E, r'_E)$ is a triple $(\psi_{0}, \psi'_{0}, \psi_1)$, where $\psi_0\in \textnormal{Hom}_{K}(D_0,E_0)$, $\psi'_0\in \textnormal{Hom}_{K'}(D'_0,E'_0)$, and $\psi_1\in\textnormal{Hom}_{I}(D_1,E_1)$, such that the squares in the following diagram commute as $I$-representations:

\centerline{
\xymatrix{
 & D_0 \ar[rr]^<<<<<<<<<{\psi_0}& & E_0\\
D_1 \ar[ur]^{r_D}\ar[dr]_{r_D'}\ar[rr]^{\psi_1}& & E_1\ar[ur]^{r_E}\ar[dr]_{r_E'} &  \\
 & D_0'\ar[rr]^<<<<<<<<<{\psi_0'} & & E_0'
}
}
\noindent We say $\psi$ is an \emph{embedding} if the maps $\psi_0, \psi'_0,$ and $\psi_1$ are injective.  
\end{defn}

The category of diagrams with morphisms defined above is denoted $\mathcal{DIAG}$. 

\subsection{The Functors $\mathcal{C}$ and $\mathcal{D}$}\label{candd}
We let $\mathcal{COEF}_G$ denote the category of $G$-equivariant coefficient systems on $X$ (cf. \cite{Pas04}, Section 5.1), and let $\mathcal{V} = (V_\sigma)_{\sigma\in X_0\cup X_1}\in\mathcal{COEF}_G$.  In order to discuss the equivalence of $\mathcal{COEF}_G$ and $\mathcal{DIAG}$, we first observe that there is an obvious ``forgetful'' functor in one direction:

\begin{defn}\label{functor one}

Let $\mathcal{D}$ be the functor from $\mathcal{COEF}_{G}$ to $\mathcal{DIAG}$ given by:
\begin{center}
\begin{tabular}{rccl}
$\mathcal{D}:$ & $\mathcal{COEF}_G$ & $\rightarrow$ & $\mathcal{DIAG}$\\
 & $\mathcal{V}=(V_{\sigma})_{\sigma}$ & $\mapsto$ & $(V_{\sigma_0}, V_{\sigma'_0}, V_{\tau_1}, r_{\sigma_0}^{\tau_1}, r_{\sigma'_0}^{\tau_1})$\\
  & & & \xymatrix{
 & V_{\sigma_0} \\
 =~ V_{\tau_1} \ar[ur]^{r_{\sigma_0}^{\tau_1}}\ar[dr]_{r_{\sigma_0'}^{\tau_1}}& \\
 & V_{\sigma_0'}
}
\end{tabular}
\end{center}
\end{defn}

We now recall the construction of the functor $\mathcal{C}$ from \emph{loc. cit.}.  Let $D= (D_{0}, D'_{0}, D_1, r, r')$ denote a diagram in $\mathcal{DIAG}$, fixed throughout the whole discussion.    

\subsubsection{Representations}\label{under space}

We will construct the vector spaces comprising $\mathcal{C}(D)$ as subrepresentations of the following compactly induced representations:
\begin{center}
$\text{c-ind}_{K}^{G}(\rho_0), ~\text{c-ind}_{K'}^{G}(\rho'_0), ~\text{c-ind}_{I}^{G}(\rho_1)$.
\end{center}

For a vertex $\sigma\in X_0$, there exists $g\in G$ such that $\sigma=g.\sigma_0$ or $\sigma = g.\sigma'_0$, depending on whether $\sigma\in X_0^e$ or $\sigma\in X_0^o$.  We define
\begin{center}
$F_\sigma: =$
$\begin{cases}
\{f\in \text{c-ind}_{K}^{G}(\rho_0): ~\text{supp}(f) \subset Kg^{-1}\} & \text{if}~\sigma\in X_0^e,\\

\{f\in \text{c-ind}_{K'}^{G}(\rho'_0): ~\text{supp}(f) \subset K'g^{-1}\} & \text{if}~\sigma\in X_0^o.
\end{cases}$
\end{center}
For an edge $\tau\in X_1$, there exists $g\in G$ such that $\tau=g.\tau_1$.  We define

\begin{center}
$F_{\tau} := \{f\in \text{c-ind}_{I}^{G}(\rho_1): ~\text{supp}(f)\subset Ig^{-1}\}.$
\end{center}
We note that these definitions are independent of the choice of $g$.  

Given the simplex $\sigma_0$ (resp. $\sigma_0'$, resp. $\tau_1$) and a vector $v\in D_0$ (resp. $v\in D_0'$, resp. $v\in D_1$), we will denote by $f_v$ the unique function in $F_{\sigma_0}$ (resp. $F_{\sigma_0'}$, resp. $F_{\tau_1}$) satisfying $\textnormal{supp}(f_v) = K$ (resp. $\textnormal{supp}(f_v) = K'$, resp. $\textnormal{supp}(f_v) = I$) and $f_v(1) = v$.  By construction, given any simplex $\sigma\subset X$ and $f\in F_\sigma$, there exists a $v\in D_0, D_0'$ or $D_1$ and $g\in G$ such that $f = g.f_v$.

Since each of the vector spaces $F_\sigma$ are contained in a $G$-representation, there is an obvious way to equip $\mathcal{F} = (F_\sigma)_\sigma$ with a $G$-action, which is easily seen to satisfy the first two points of Definition 5.3 of \emph{loc. cit.}.

\subsubsection{$G$-equivariant restriction maps}\label{restriction maps}

We first define the restriction maps $r^{\tau_1}_{\sigma_0}$ and $r^{\tau_1}_{\sigma_0'}$ by
$$r_{\sigma_0}^{\tau_1}(f_v) := f_{r(v)},\qquad r_{\sigma_0'}^{\tau_1}(f_v) := f_{r'(v)},$$
where $v\in D_1$.  The maps $r_{\sigma_0}^{\tau_1}$ and $r_{\sigma_0'}^{\tau_1}$ are well-defined, and are easily seen to be $I$-equivariant.

Now let $\tau = \{\sigma,\sigma'\}$ be an edge such that $\sigma\in X_0^e$, $\sigma'\in X_0^o$.  There exists an element $g$ which satisfies $\tau=g.\tau_{1}$ (which implies $\sigma=g.\sigma_0$ and $\sigma' = g.\sigma_0'$); such a choice of $g$ is unique up to an element of $I$.  Since $g$ defines a vector space isomorphism between $F_{\tau_1}$ and $F_\tau$, every element of $F_\tau$ is of the form $g.f_v$, for some $v\in D_1$.  We define $r^\tau_\sigma$ and $r^\tau_{\sigma'}$ in the unique way which makes the diagram in the third point of Definition 5.3 of \emph{loc. cit.} hold.  Explicitly,  
$$r_{\sigma}^{\tau}(g.f_v) = r_{g.\sigma_0}^{g.\tau_1}(g.f_v) := g.r_{\sigma_0}^{\tau_1}(f_v) = g.f_{r(v)},\qquad r_{\sigma'}^{\tau}(g.f_v) = r_{g.\sigma_0'}^{g.\tau_1}(g.f_v) := g.r_{\sigma_0'}^{\tau_1}(f_v) = g.f_{r'(v)}.$$

\subsubsection{Morphisms}\label{morphisms}

Let $D=(D_0, D'_{0}, D_1, r_D, r'_D)$ and $E=(E_0, E'_0, E_1, r_E, r'_E)$ be two diagrams, and $\psi=(\psi_{0}, \psi'_{0}, \psi_1)$ a morphism between them.  Let $\mathcal{F}=(F_\sigma)_\sigma$ and $\mathcal{F}'=(F'_\sigma)_\sigma$ be the coefficient systems associated to $D$ and $E$, respectively, constructed above.  

Let $\sigma\in X_0^e$, let $g\in G$ be such that $\sigma=g.\sigma_0$, and let $v\in D_0$.  Given $g.f_v\in F_\sigma$, we define the element $\psi_\sigma(g.f_v)\in F_\sigma'$ by
$$\psi_{\sigma}(g.f_v) := g.f_{\psi_0(v)}.$$
Likewise, we define 
$$\psi_{\sigma'}(g.f_v) := g.f_{\psi_0'(v)}$$ 
if $\sigma'\in X_0^o$, $\sigma' = g.\sigma_0'$, and $v\in D_0'$.

Let $\tau$ be an edge, let $g\in G$ be such that $\tau=g.\tau_1$, and let $v\in D_1$.  Given $g.f_v\in F_\tau$, we define the $\psi_\tau(g.f_v)\in F_\tau'$ by
$$\psi_{\tau}(g.f_v) := g.f_{\psi_1(v)}.$$
Note that these maps are well-defined.  

Finally, we must verify that these linear maps $(\psi_{\sigma})_\sigma$ are compatible with the restriction maps and the action of $G$.  Both claims follow directly from the definitions.  

We may now make the following definition:
\begin{defn}\label{defofc}
Let $\mathcal{C}$ be the map:
\begin{center}
\begin{tabular}{lrcl}
$\mathcal{C}:$ &  $\mathcal{DIAG}$ & $\rightarrow$ &  $\mathcal{COEF}_{G}$\\
 & $D = (D_0,D_0',D_1,r,r')$ & $\mapsto$ & $\mathcal{F} = (F_{\sigma})_\sigma,$
\end{tabular}
\end{center}
where $(F_\sigma)_\sigma$ is the coefficient system defined above.
\end{defn}

The results of the previous subsections imply that $\mathcal{C}$ is a bona fide functor between the two categories. 

\begin{thm}\label{equivalence}
The categories $\mathcal{DIAG}$ and $\mathcal{COEF}_{G}$ are equivalent.  The equivalence is induced by the functors 
\begin{center}
\begin{tabular}{rccc}
$\mathcal{D}:$ & $\mathcal{COEF}_G$ & $\rightarrow$ & $\mathcal{DIAG}$\\
$\mathcal{C}:$ & $\mathcal{DIAG}$ & $\rightarrow$ & $\mathcal{COEF}_G$,
\end{tabular}
\end{center}
where $\mathcal{C}$ and $\mathcal{D}$ are defined in Definitions \ref{functor one} and \ref{defofc} below.
\end{thm}

\begin{proof}
 The proof is identical to the proof of Theorem 5.17 in \emph{loc. cit.}.
\end{proof}

\section{Supersingular Representations}\label{init}

\subsection{Initial Diagrams}\label{initial diag}

Using the functor $\mathcal{C}$, we may now construct coefficient systems by defining the appropriate diagrams.  In particular, to each supersingular $\hh_{\ol{\mathbb{F}}_p}(G,I(1))$-module we associate a diagram as follows.  

\begin{defn}\label{diagram one}
Let $\chi\in\widehat{H}$, and let $M_{\chi,\tj}$ be a supersingular $\hh_{\ol{\mathbb{F}}_p}(G,I(1))$-module, with $\tj = (J,J')$ as in Definition \ref{ssingdef}.  We associate to $M_{\chi,\tj}$ the diagram
$$\xymatrix{
 & &  \rho_{\chi,J} \\
 D_{\chi,\tj}:= \left(\rho_{\chi, J},~ \rho'_{\chi, J'},~ \chi, ~j, ~ j'\right) = & \chi \ar[ur]^{j}\ar[dr]_{j'}& \\
 & &  \rho'_{\chi,J'}
}$$
where $j$ and $j'$ are inclusion maps, and define 
$$\mathcal{D}_{\chi, \tj} := \mathcal{C}(D_{\chi, \tj})$$
to be the associated $G$-equivariant coefficient system.  We let the underlying space of the $I$-representation $\chi$ be spanned by a fixed vector, which we identify with its image in $\rho_{\chi,J}^{I(1)} = \rho_{\chi,J}^\uu$ and $(\rho_{\chi,J'}')^{I(1)} = (\rho_{\chi,J'}')^{\uu'}$ via $j$ and $j'$.  
\end{defn}

\begin{remark}
We note that if $M_{\chi,\tj}$ and $D_{\chi,\tj}$ are as above, we have $\rho_{\chi,J}^\uu\cong M_{\chi,\tj}|_{\hh_\Gamma}$ as $\hh_\Gamma$-modules and $(\rho_{\chi,J'}')^{\uu'}\cong M_{\chi,\tj}|_{\hh_{\Gamma'}}$ as $\hh_{\Gamma'}$-modules.  
\end{remark}

\begin{prop}\label{reduced to the irr quotient of H_0}
Let $M_{\chi, \tj}$ be a supersingular $\hh_{\ol{\mathbb{F}}_p}(G,I(1))$-module, and let $\pi$ be a nonzero irreducible quotient of $H_0(X,\mathcal{D}_{\chi,\tj})$, the $0$-homology of the coefficient system $\mathcal{D}_{\chi,\tj}$ (cf. \cite{Pas04}, Section 5.2).  Then $\pi^{I(1)}$ contains $M_{\chi,\tj}$, and $\pi$ is supersingular as a $G$-representation.
\end{prop}

\begin{proof}
In the notation of Subsection \ref{under space} and Section 5.2 of \emph{loc. cit.}, we let $\omega_{\sigma_0, f_v}\in C_c^{or}(X_{(0)},\mathcal{D}_{\chi,\tj})$ be the $0$-chain supported on $\sigma_0$, satisfying $\omega_{\sigma_0, f_v}(\sigma_0) = f_v$, where $v$ is a fixed vector spanning the underlying space of $\chi$.  By definition of the $G$-action, $\omega_{\sigma_0, f_v}$ is $I(1)$-invariant and the group $I$ acts by the character $\chi$.  Let $\bar{\omega}_{\sigma_0, f_v}$ denote its image in $H_0(X,\mathcal{D}_{\chi,\tj})$.  To proceed, we must show two things:

\begin{enumerate}[(i)]
\item The element $\bar{\omega}_{\sigma_0, f_v}$ generates $H_0(X, \mathcal{D}_{\chi, \tj})$ as a $G$-representation.
\item The right action of $\mathcal{H}_{\ol{\mathbb{F}}_p}(G, I(1))$ on $\langle \bar{\omega}_{\sigma_0, f_v} \rangle_{\ol{\mathbb{F}}_p}$ yields an isomorphism onto $M_{\chi, \tj}$.
\end{enumerate}

Assuming these two claims, we let $\pi$ be a nonzero irreducible quotient of $H_0(X, \mathcal{D}_{\chi, \tj})$.  Since $\bar{\omega}_{\sigma_0,f_v}$ generates $H_0(X,\mathcal{D}_{\chi, \tj})$, its image in $\pi$ will be nonzero.  The second result above then shows that $\pi^{I(1)}$ contains the $\mathcal{H}_{\ol{\mathbb{F}}_p}(G, I(1))$-module $M_{\chi, \tj}$ and the proposition follows from Corollary \ref{maincor}.

It remains to prove the two claims.  For the first, we note that if $\omega_{\sigma_0', f_v}\in C_c^{or}(X_{(0)},\mathcal{D}_{\chi,\tj})$ denotes the $0$-chain supported on $\sigma_0'$ satisfying $\omega_{\sigma_0', f_v}(\sigma_0') = f_v$, then Lemma 5.6 of \emph{loc. cit.} implies $\bar{\omega}_{\sigma_0, f_v} = \bar{\omega}_{\sigma_0', f_v}$ in $H_0(X,\mathcal{D}_{\chi, \tj})$.  Since any irreducible representation of $K$ or $K'$ is generated by its space of $I(1)$-invariants, $\bar{\omega}_{\sigma_0, f_v}$ (resp. $\bar{\omega}_{\sigma_0', f_v}$) generates the image in $H_0(X,\mathcal{D}_{\chi, \tj})$ of the space $C_c^{or}(\sigma_0,\mathcal{D}_{\chi, \tj})$ (resp. $C_c^{or}(\sigma_0',\mathcal{D}_{\chi, \tj})$) of $0$-chains supported on $\sigma_0$ (resp. $\sigma_0'$).  This fact, combined with the observation that $G$ acts transitively on the sets $X_0^e$ and $X_0^o$, verifies the claim.  

For the second claim, note that by Definition \ref{defoffinreps} and our choice of irreducible $K$- and $K'$-representations, we have
\begin{center}
\begin{tabular}{ccccc}
$\langle v\rangle_{\ol{\mathbb{F}}_p}$ & $=$ & $(\rho_{\chi, J})^{I(1)}$ & $\cong$ &  $M_{\chi, J}$ as $\mathcal{H}_\Gamma$-modules,\\
$\langle v\rangle_{\ol{\mathbb{F}}_p}$ & $=$ & $(\rho'_{\chi, J'})^{I(1)}$ & $\cong$ &  $M'_{\chi, J'}$ as $\mathcal{H}_{\Gamma'}$-modules,
\end{tabular}
\end{center}
where $\tj = (J,J')$.  We conclude from Theorem \ref{propstr} that $\langle \bar{\omega}_{\sigma_0, f_v} \rangle_{\ol{\mathbb{F}}_p}$ is equivalent to $M_{\chi, \tj}$ as a right $\mathcal{H}(G, I(1))$-module.
\end{proof}

\subsection{Pure Diagrams}\label{advanced}

In light of Proposition \ref{reduced to the irr quotient of H_0}, it suffices to construct irreducible quotients of $H_0(X,\mathcal{D}_{\chi, \tj})$ to produce supersingular representations.  With this in mind, we make the following definition:

\begin{defn}\label{karol-2}
Let $M_{\chi,\tj}$ be a supersingular module, and let ${D}=(D_0, D'_0, D_1, r, r')$ be a diagram.  We say $D$ is \emph{pure with respect to} $M_{\chi, \tj}$ if it satisfies the following conditions:
\begin{enumerate}[(i)]
\item There exists an embedding of diagrams:
$$\psi: {D}_{\chi, \tj}\hookrightarrow D.$$
\item The maps $r$ and $r'$ induce isomorphisms $D_0|_I \cong D'_0|_I \cong D_1 $.
\item Either $\textnormal{soc}_K(D_0)$ or $\textnormal{soc}_{K'}(D'_0)$ is irreducible.
\end{enumerate}
\end{defn}

With these definitions, we prove the following result, whose proof is due to Pa\v{s}k\={u}nas (\cite{Pas04}).  

\begin{thm}\label{Karol-3}
Let $M_{\chi, \tj}$ be a supersingular module, and suppose that $D$ is a pure diagram with respect to $M_{\chi,\tj}$. Then the image of the induced $G$-morphism between the $0$-homology
\begin{center}
$\pi_{{D}} =\textnormal{im}(\psi_*:H_0(X, \mathcal{D}_{\chi, \tj})\rightarrow H_0(X, \mathcal{C}({D}))~)$
\end{center}
is irreducible, admissible and supersingular.   
\end{thm}

\begin{proof}
Let $D = (D_0, D'_0, D_1, r, r')$.  To verify the result, it suffices to show $\pi_{{D}}$ is irreducible, admissible and nonzero, by Proposition \ref{reduced to the irr quotient of H_0}.  Let us assume that $\textnormal{soc}_K(D_0)$ is irreducible; the case with $\textnormal{soc}_{K'}(D_0')$ irreducible is the same.    

Since $\psi$ is an embedding, we have $\psi_*(\bar{\omega}_{\sigma_0,f_v})\neq 0$ (so $\pi_D \neq \{0\}$), and therefore the sub-$K$-representation it generates is irreducible (by definition of $\mathcal{D}_{\chi,\tj}$).  By Proposition 5.10 of \cite{Pas04}, we obtain
$$\{0\}\neq \langle K.\psi_*(\bar{\omega}_{\sigma_0,f_v})\rangle_{\ol{\mathbb{F}}_p} \subset \textnormal{soc}_K(H_0(X,\mathcal{C}(D))|_K) \cong \textnormal{soc}_K(D_0),$$
and therefore the inclusion is an equality.  

Now let $\pi'$ be a nonzero $G$-invariant subspace of $\pi_D$.  Since $\textnormal{soc}_K(\pi'|_K)\neq \{0\}$, we have
$$\{0\}\neq \textnormal{soc}_K(\pi'|_K)^{I(1)}\subset \textnormal{soc}_K(H_0 (X, \mathcal{C}({D}))|_K)^{I(1)} = \langle K.\psi_*(\bar{\omega}_{\sigma_0,f_v})\rangle_{\ol{\mathbb{F}}_p}^{I(1)} = \langle \psi_*(\bar{\omega}_{\sigma_0,f_v})\rangle_{\ol{\mathbb{F}}_p},$$
and therefore the inclusion must be an equality.  This shows $\psi_*(\bar{\omega}_{\sigma_0,f_v})\in \pi'$, and since this vector generates $\pi_D$, we must have $\pi' = \pi_D$.

To show admissibility, we observe that $$\pi_D|_K\subset H_0(X,\mathcal{C}(D))|_K \cong D_0,$$ which implies that $\textnormal{soc}_K(\pi_D|_K)$ is simple.  The claim then follows from Corollary \ref{finiteness of I_1 of inj}.
\end{proof}

The definitions of pure and essentially pure diagrams do not make it clear that such diagrams exist in general.  We take up this question when $q = p$ in the next section.

\subsection{Construction of Pure Diagrams when $q = p$}

We now give an application of the formalism developed in the previous section, using results of Section \ref{repsandhmods}.  

\begin{thm}\label{Karol}
Suppose $q=p$. Then for every supersingular module $M_{\chi, \tj},$ there exists a pure diagram with respect to $M_{\chi, \tj}$.  More precisely, the corresponding initial diagram
\begin{center}
$D_{\chi, \tj}=(\rho_{\chi,J},~ \rho'_{\chi,J'},~ \chi, ~j, ~ j')$
\end{center}
can be embedded into a pure diagram
\begin{center}
${E}_{\chi, \tj}= (\textnormal{inj}_K(\textnormal{P}), ~\textnormal{inj}_{K'}(\textnormal{P}'), ~\textnormal{inj}_I(\textnormal{X}),~ \boldsymbol{j},~\boldsymbol{j}')$
\end{center}
where $\textnormal{P} = \rho_{\chi,J}$, $\textnormal{P}'$ is a semisimple representation of $K'$ having $\rho'_{\chi,J'}$ as a summand, and $\textnormal{X}$ is a semisimple representation of $I$ having $\chi$ as a summand.  
\end{thm}

\begin{proof}
Using Lemma \ref{Karol-7}, we rewrite equation $\eqref{decom for K}$ as
\begin{equation}
\textnormal{inj}_{K}(\rho_{\chi,J})|_I \cong \bigoplus_{\mu=\mu^s} \textnormal{inj}_I(\mu)^{\oplus m_{\rho_{\chi,J}, \mu}}\oplus\bigoplus_{\mu\neq\mu^s} (\textnormal{inj}_I(\mu)\oplus\textnormal{inj}_I(\mu^s))^{\oplus m_{\rho_{\chi,J}, \mu}},
\end{equation}
the sums being taken over $W$-orbits of characters.

We let $\textnormal{P} := \rho_{\chi,J}$, let $\textnormal{X}$ be the representation of $I$ defined by
$$\textnormal{X} := \bigoplus_{\mu=\mu^s} \mu^{\oplus m_{\rho_{\chi,J}, \mu}}\oplus\bigoplus_{\mu\neq\mu^s} (\mu\oplus\mu^s)^{\oplus m_{\rho_{\chi,J}, \mu}},$$
and let $\textnormal{P}'$ be a representation of $K'$ of the form
$$\textnormal{P}' := \bigoplus_{\mu=\mu^s} (\rho'_{\gamma_\mu})^{\oplus m_{\rho_{\chi,J}, \mu}}\oplus\bigoplus_{\mu\neq\mu^s}(\rho'_{\gamma_\mu})^{\oplus m_{\rho_{\chi,J}, \mu}}.$$
Here we choose $\rho'_{\gamma_\mu}\in\{\rho'_{\mu,S'},\rho'_{\mu,\emptyset}\}$ if $\mu=\mu^s$ and $\rho'_{\gamma_\mu}\in\{\rho'_{\mu,\emptyset},\rho'_{\mu^s,\emptyset}\}$ if $\mu\neq\mu^s$; the only stipulation we make is that $\rho_{\chi,J'}'$ be among the summands.  By definition and Corollary \ref{injdecomp}, we have $\textnormal{inj}_K(\textnormal{P})|_I \cong \textnormal{inj}_{K'}(\textnormal{P}')|_I \cong \textnormal{inj}_I(\textnormal{X})$.  It is now evident how to define 
$$\psi: D_{\chi,\tj}\rightarrow E_{\chi,\tj},$$
and that $E_{\chi,\tj}$ is pure with respect to $M_{\chi,\tj}$.  
\end{proof}

\begin{cor}\label{Karol*}
Assume $q = p$, let $M_{\chi, \tj}$ be a supersingular module, and let $E_{\chi, \tj}$ be a pure diagram with respect to $M_{\chi, \tj}$, constructed as in the proof of the previous theorem.  Then the image
\begin{center}
$\pi_{E_{\chi, \tj}}=\textnormal{im}(\psi_*:H_0(X, \mathcal{D}_{\chi, \tj})\rightarrow H_0(X, \mathcal{C}(E_{\chi, \tj}))~)$
\end{center}
is irreducible, admissible, and supersingular.  Moreover, we have $\textnormal{soc}_K(\pi_{E_{\chi,\tj}}|_K)\cong \rho_{\chi,J}$, and for distinct modules $M_{\chi, \tj}, M_{\chi', \tj'}$, the representations $\pi_{E_{\chi, \tj}}, \pi_{E_{\chi', \tj'}}$ are nonisomorphic.   
\end{cor}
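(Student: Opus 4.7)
The claim that each $\pi_{E_{\chi,\tj}}$ is nonzero, irreducible, admissible, and supersingular is immediate from Theorem \ref{Karol-3}, since Theorem \ref{Karol} supplies $E_{\chi,\tj}$ as a pure diagram for $M_{\chi,\tj}$. The substance of the corollary lies in the distinctness assertion, and my plan is to recover the pair $(\chi,\tj)$ intrinsically from $\pi_{E_{\chi,\tj}}$ by reading off its $K$- and $K'$-socles.

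First I would compute the $K$-socle. Writing $\tj = (J,J')$, the construction of Theorem \ref{Karol} gives $\textnormal{P}=\rho_{\chi,J}$, so the $K$-piece of $E_{\chi,\tj}$ is $\textnormal{inj}_K(\rho_{\chi,J})$, whose socle is the irreducible representation $\rho_{\chi,J}$. Since $\pi_{E_{\chi,\tj}}|_K$ embeds into $H_0(X,\mathcal{E}_{\chi,\tj})|_K \cong \textnormal{inj}_K(\rho_{\chi,J})$ by Proposition \ref{isomorphism}, and since $\pi_{E_{\chi,\tj}}$ has a nonzero $K_1$-invariant subspace, I conclude $\textnormal{soc}_K(\pi_{E_{\chi,\tj}}|_K) \cong \rho_{\chi,J}$. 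A hypothetical isomorphism $\pi_{E_{\chi,\tj}}\cong\pi_{E_{\chi',\tj'}}$ (with $\tj' = (\tilde{J},\tilde{J}')$) then forces $\rho_{\chi,J}\cong \rho_{\chi',\tilde{J}}$ as $K$-representations, and the Carter-Lusztig bijection (Propositions \ref{repsmods} and \ref{finheckemods} together with Definition \ref{defoffinreps}) gives $\chi=\chi'$ and $J=\tilde{J}$.

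It remains to distinguish $J'$ from $\tilde{J}'$ in the one case where $J=\tilde{J}$ but $\tj\neq \tj'$, namely when $\chi^s=\chi$, $\chi\neq\eta\circ\det$, and $\{J',\tilde{J}'\}=\{S',\emptyset\}$. On one hand, the class $\bar{\omega}_{\sigma_0',f_v}=\bar{\omega}_{\sigma_0,f_v}$ (equal by Lemma \ref{one-chain} applied to $\tau_1$) exhibits, via the argument of Proposition \ref{reduced to the irr quotient of H_0}, a copy of $\rho'_{\chi,\tilde{J}'}$ inside $\pi_{E_{\chi,\tj'}}|_{K'}$. On the other hand, Theorem \ref{Karol} builds $\textnormal{P}'$ by selecting exactly one summand $\rho'_{\gamma_\mu}$ per $W$-orbit of characters of $H$, with the $\chi$-orbit contributing $\rho'_{\chi,J'}$; consequently $\rho'_{\chi,\tilde{J}'}$ does not occur in $\textnormal{P}'=\textnormal{soc}_{K'}(\textnormal{inj}_{K'}(\textnormal{P}'))$ and cannot appear as a $K'$-subrepresentation of $\pi_{E_{\chi,\tj}}|_{K'}$. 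Transporting the copy of $\rho'_{\chi,\tilde{J}'}$ across the hypothetical isomorphism yields the desired contradiction. The main subtlety is just verifying this single-summand-per-orbit property of $\textnormal{P}'$ in the construction of Theorem \ref{Karol}; once that bookkeeping is secured the rest of the argument is routine.
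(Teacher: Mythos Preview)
Your argument is correct, but it takes a different and somewhat more laborious route than the paper. The paper observes that the proof of Theorem \ref{Karol-3} already identifies the one-dimensional space $\textnormal{soc}_K(\pi_{E_{\chi,\tj}}|_K)^{I(1)}$ as an $\hh_{\ol{\mathbb{F}}_p}(G,I(1))$-submodule of $\pi_{E_{\chi,\tj}}^{I(1)}$ isomorphic to $M_{\chi,\tj}$; any $G$-isomorphism $\phi:\pi_{E_{\chi,\tj}}\to\pi_{E_{\chi',\tj'}}$ is Hecke-equivariant on $I(1)$-invariants and carries $K$-socles to $K$-socles, so it yields $M_{\chi,\tj}\cong M_{\chi',\tj'}$ directly. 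This recovers the full pair $(\chi,\tj)$ in one stroke, with no case analysis.

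Your approach instead reads off $(\chi,J)$ from the $K$-socle as a bare $K$-representation, and then handles the residual ambiguity in $J'$ (only present in the hybrid case) by a $K'$-socle argument exploiting the explicit shape of $\textnormal{P}'$ in the construction. This works, but two small points of care: first, the construction of Theorem \ref{Karol} does not select ``exactly one summand per orbit'' but rather one \emph{isomorphism type} $\rho'_{\gamma_\mu}$ per orbit taken with multiplicity $m_{\rho,\mu}$; your conclusion that $\rho'_{\chi,\tilde{J}'}$ is absent from $\textnormal{P}'$ is nonetheless correct, since the $\chi$-orbit contributes only $(\rho'_{\chi,J'})^{\oplus m_{\rho,\chi}}$ and no other orbit can produce a representation with $\uu'$-invariants equal to $\chi$. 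Second, your argument leans on the particular choices made in building $E_{\chi,\tj}$, whereas the paper's Hecke-module argument is insensitive to those choices. Both approaches are valid; the paper's is shorter because it packages the $K$- and $K'$-information together via the Hecke action rather than unbundling them.
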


\begin{proof}
 The first part of the Corollary follows from Theorems \ref{Karol-3} and \ref{Karol}.  The statement about the $K$-socle follows from the proof of Theorem \ref{Karol-3}.  To prove the last part, let us assume $\phi:\pi_{E_{\chi, \tj}}\stackrel{\sim}{\rightarrow} \pi_{E_{\chi', \tj'}}$ is an isomorphism;  we then obtain an induced isomorphism $${\phi}:\textnormal{soc}_K(\pi_{E_{\chi, \tj}}|_K)^{I(1)}\stackrel{\sim}{\rightarrow} \textnormal{soc}_K(\pi_{E_{\chi', \tj'}}|_K)^{I(1)}.$$  The proof of Theorem \ref{Karol-3} shows how to equip these spaces with an action of $\hh_{\ol{\mathbb{F}}_p}(G,I(1))$, which gives $$M_{\chi, \tj}\cong \textnormal{soc}_K(\pi_{E_{\chi, \tj}}|_K)^{I(1)}\stackrel{{\phi}}{\rightarrow} \textnormal{soc}_K(\pi_{E_{\chi', \tj'}}|_K)^{I(1)}\cong M_{\chi', \tj'}.$$  The claim now follows from the comments following Definition \ref{ssingdef}.
\end{proof}

\begin{remark}
 Assume $q = p$.  Given a supersingular module $M_{\chi, \tj}$, our construction shows that there may be many choices of pure diagram $E_{\chi, \tj}$ associated to $M_{\chi, \tj}$.  As a consequence, if $E_{\chi, \tj}^1$ and $E_{\chi, \tj}^2$ are two such diagrams, we obtain two supersingular representations $\pi_{E_{\chi, \tj}^1}$ and $\pi_{E_{\chi, \tj}^2}$ whose $I(1)$-invariants contain $M_{\chi, \tj}$.  It is not clear, however, if these representations are isomorphic.  
\end{remark}

\section{Some Remarks}

\subsection{The Case $q \neq p$}  In this section we point out the shortcomings of our method in the case when $q \neq p$.  We assume that $q = p^2$ for the sake of simplicity.  

Let $1$ denote the trivial character of $H$ (or, equivalently, of $I$), and consider the diagram $D_{1, (\emptyset, S')} = (\rho_{1,\emptyset},~ \rho'_{1,S'},~ 1,~ j,~ j')$.  Here $\rho_{1,\emptyset}$ is the Steinberg representation of $K$, and $\rho'_{1,S'}$ is the trivial character of $K'$.  We claim that there does not exist a pure diagram $D$ with respect to $M_{1, (\emptyset, S')}$ of the form $(\textnormal{inj}_K(\textnormal{P}),~ \textnormal{inj}_{K'}(\textnormal{P}'),~ \textnormal{inj}_I(\textnormal{X}),~ \boldsymbol{j},~ \boldsymbol{j}')$, where $\textnormal{P}$ is a semisimple representation of $K$, $\textnormal{P}'$ is a semisimple representation of $K'$, and $\textnormal{X}$ is a semisimple representation of $I$.

We require some preparatory facts.  Let $\mu$ and $\mu^\star$ be two $\ol{\mathbb{F}}_p$-characters of $H$ defined by $$\mu\begin{pmatrix}a & 0 & 0 \\ 0 & \delta & 0 \\ 0 & 0 & \ol{a}^{-1}\end{pmatrix} = a^{(p^2 + 1)(p - 1)},\quad \mu^\star\begin{pmatrix}a & 0 & 0 \\ 0 & \delta & 0 \\ 0 & 0 & \ol{a}^{-1}\end{pmatrix} = a^{(p^2 + 1)(p + 1)}.$$
Using the character tables computed in \cite{En63} along with a slightly modified version of Proposition 1.1 in \cite{Di07}, we obtain
$$\textnormal{ind}_{\bb'}^{\Gamma'}(\mu)^{\textnormal{ss}}  \cong  \rho'_{1,S'}~\oplus~ \rho'_{\mu,\emptyset}~\oplus~ \rho'_{\mu^s,\emptyset}~\oplus~ \rho'_{\mu^\star,\emptyset},$$
 where the superscript ``ss'' denotes semisimplification.  Since $\textnormal{SU}(1,1)(\mathbb{F}_{p^4}/\mathbb{F}_{p^2})$ is conjugate to $\textnormal{SL}_2(\mathbb{F}_{p^2})$, we may modify the arguments in Section 4.2 of \cite{Pas04} to show that $\dim_{\ol{\mathbb{F}}_p}(\textnormal{inj}_{\Gamma'}(\rho'_{1,S'})) = 3p^2$.  Combining these two facts with Lemma \ref{Karol-7} shows that 
 \begin{equation}\label{injtriv}
 \textnormal{inj}_{K'}(\rho'_{1,S'})|_I \cong \textnormal{inj}_I(1)\oplus\textnormal{inj}_I(\mu)\oplus\textnormal{inj}_I(\mu^s).
 \end{equation}
 Additionally, $\rho_{1,\emptyset}$ is injective as a representation of $\Gamma$, and therefore
 \begin{equation}\label{injst}
 \textnormal{inj}_K(\rho_{1,\emptyset})|_I \cong \textnormal{inj}_I(1).
 \end{equation}

Assume now that we have an embedding of diagrams $D_{1, (\emptyset, S')}\rightarrow D$, with $D$ pure:

\centerline{
\xymatrix{
 & \rho_{1,\emptyset} \ar[rr]& & \textnormal{inj}_K(\textnormal{P})\\
1\ar[ur]^j\ar[dr]_{j'} \ar[rr]& & \textnormal{inj}_{I}(\textnormal{X})\ar[ur]^{\boldsymbol{j}}\ar[dr]_{\boldsymbol{j}'} & \\
 & \rho'_{1,S'} \ar[rr] & & \textnormal{inj}_{K'}(\textnormal{P}')
}
}
\noindent This implies in particular that $\rho_{1,\emptyset}$ is a direct summand of $\textnormal{P}$ (resp. $\rho_{1,S'}'$ is a direct summand of $\textnormal{P}'$), and hence $\textnormal{inj}_K(\rho_{1,\emptyset})$ is a direct summand of $\textnormal{inj}_K(\textnormal{P})$ (resp. $\textnormal{inj}_{K'}(\rho_{1,S'}')$ is a direct summand of $\textnormal{inj}_{K'}(\textnormal{P}')$).  

Assume first that the $K$-representation of $D$ has simple $K$-socle, so that $\textnormal{P} \cong \rho_{1,\emptyset}$.  The definition of purity gives  
$$\textnormal{inj}_I(1)\oplus\textnormal{inj}_I(\mu)\oplus\textnormal{inj}_I(\mu^s) \stackrel{\textnormal{eq.}~\eqref{injtriv}}{\cong} \textnormal{inj}_{K'}(\rho'_{1,S'})|_I \hookrightarrow \textnormal{inj}_{K'}(\textnormal{P}')|_I\cong\textnormal{inj}_K(\textnormal{P})|_I\stackrel{\textnormal{eq.}~\eqref{injst}}{\cong} \textnormal{inj}_I(1),$$ which is absurd.  

We may therefore assume that the $K'$-representation of $D$ has simple $K'$-socle, so that $\textnormal{P}' \cong \rho'_{1,S'}$ and $$\textnormal{inj}_K(\textnormal{P})|_I\cong \textnormal{inj}_{K'}(\textnormal{P}')|_I \stackrel{\textnormal{eq.}~\eqref{injtriv}}{\cong} \textnormal{inj}_I(1)\oplus\textnormal{inj}_I(\mu)\oplus\textnormal{inj}_I(\mu^s),$$ by the definition of purity.  As $\textnormal{inj}_K(\rho_{1,\emptyset})$ is a summand of $\textnormal{inj}_K(\textnormal{P})$, we have
$$\textnormal{inj}_K(\textnormal{P}/\rho_{1,\emptyset})|_I\cong\textnormal{inj}_K(\textnormal{P})/\textnormal{inj}_K(\rho_{1,\emptyset})|_I \cong \textnormal{inj}_I(\mu)\oplus\textnormal{inj}_I(\mu^s);$$ 
since $m_{\rho_{\chi,J},\chi}\geq 1$, the only representations for which this could potentially be true are $\rho_{\mu,\emptyset}$ and $\rho_{\mu^s,\emptyset}$.  The dimensions of the injective envelopes of $\textnormal{SU}(2,1)(\mathbb{F}_{p^4}/\mathbb{F}_{p^2})$ have been computed explicitly by Dordowsky in his Diplomarbeit (\cite{Do88}).  In particular, his results show that $\dim_{\ol{\mathbb{F}}_p}(\textnormal{inj}_\Gamma(\rho_{\mu,\emptyset})) = \dim_{\ol{\mathbb{F}}_p}(\textnormal{inj}_\Gamma(\rho_{\mu^s,\emptyset})) = 12p^6$, which implies that the number 
of summands in the decompositions of $\textnormal{inj}_K(\rho_{\mu,\emptyset})|_I$ and $\textnormal{inj}_K(\rho_{\mu^s,\emptyset})|_I$ is  12.  This verifies our claim.

\subsection{Comparison with $\textnormal{SL}_2(F)$}
In the course of defining diagrams and coefficient systems for $\textrm{U}(2,1)(E/F)$, there are several parallels one can draw between the formalism we have used and the analogous formalism for the group $\textnormal{SL}_2(F)$.  We hope to make this connection precise here, drawing on results of Abdellatif in \cite{Ab13}.  In this section only, the prime $p$ may be arbitrary.  

We let $G_S := \textnormal{SL}_2(F),~ K_S := \textnormal{SL}_2(\mathfrak{o}_F),$ and $K_S' := \alpha_SK_S\alpha_S^{-1}$, where 
$$\alpha_S := \begin{pmatrix}1 & 0 \\ 0 & \varpi_F\end{pmatrix}.$$
Let $I_S := K_S\cap K_S'$ be the Iwahori subgroup, and $I_S(1)$ its unique pro-$p$-Sylow subgroup.  Let 
$$w_s := \begin{pmatrix}0 & -1 \\ 1 & 0\end{pmatrix}~~\textnormal{and}~~ w_{s'} := \begin{pmatrix}0 & -\varpi_F^{-1}\\ \varpi_F & 0\end{pmatrix},$$ 
and for $r\in \mathbb{Z}$, let $\omega^r$ denote the $\ol{\mathbb{F}}_p$-character of the finite torus $H_S := I_S/I_S(1)$ defined by $$\omega^r\begin{pmatrix}a & 0 \\ 0 & a^{-1}\end{pmatrix} = a^r,$$ where $a\in \mathbb{F}_q^\times$.  

As in Section \ref{halgs}, we denote by $\hh_{\ol{\mathbb{F}}_p}(G_S,I_S(1)) := \textnormal{End}_{G_S}(\textnormal{c-ind}_{I_S(1)}^{G_S}(1))$ the pro-$p$-Iwahori-Hecke algebra, and let $\T_{w_s}$ (resp. $\T_{w_{s'}}$) be the endomorphism corresponding by adjunction to the characteristic function of $I_S(1)w_sI_S(1)$ (resp. $I_S(1)w_{s'}I_S(1)$).  For $0\leq r < q-1$, we define $$e_{\omega^r} := |H_S|^{-1}\sum_{h\in H_S}\omega^r(h)\T_{h},$$ where $\T_h$ corresponds to the characteristic function of $I_S(1)hI_S(1)$.  By Theorem 1 of \cite{Vig05}, $\hh_{\ol{\mathbb{F}}_p}(G_S,I_S(1))$ is generated by $\T_{w_s}, \T_{w_{s'}}$ and $e_{\omega^r}$ for $0\leq r<q-1$.

The supersingular Hecke modules (as defined in \cite{Vig05}) have been classified in \cite{Ab14b}:

\begin{prop}
The supersingular $\hh_{\ol{\mathbb{F}}_p}(G_S,I_S(1))$-modules are all one-dimensional.  They are given by:
 \begin{center}
  \begin{tabular}{ccrclccclcccl}
   $M_{0}$ & : & $e_{1}$ & $\mapsto$ & $1$, & & $\T_{w_s}$ & $\mapsto$ & $\phantom{-}0$, & & $\T_{w_{s'}}$ & $\mapsto$ & $-1$;\\
   $M_{q-1}$ & : & $e_1$ & $\mapsto$ & $1$, & & $\T_{w_s}$ & $\mapsto$ & $-1$, & & $\T_{w_{s'}}$ & $\mapsto$ & $\phantom{-}0$;\\
   $M_{r}$ & : & $e_{\omega^r}$ & $\mapsto$ & $1$, & & $\T_{w_s}$ & $\mapsto$ & $\phantom{-}0$, & & $\T_{w_{s'}}$ & $\mapsto$ & $\phantom{-}0$,
  \end{tabular}
 \end{center}
where $0 < r < q-1$.  
\end{prop}

As is the case for $\textrm{U}(2,1)(E/F)$, the Bruhat-Tits building $X_S$ of $G_S$ is a tree.  The action of $G_S$ partitions the vertices into two orbits (those at an even (resp. odd) distance from the vertex corresponding to $K_S$), and is transitive on the set of (nonoriented) edges.  Hence, the notion of a diagram is the same as in Definition \ref{diag}, and the results of Section \ref{diagsandcoeffs} carry over formally for the group $G_S$.  In particular, the categories $\mathcal{COEF}_{G_S}$ and $\mathcal{DIAG}$ are equivalent.  With this analogy in mind, we define the following diagrams.  

\begin{defn}\label{sl2diags}  
Let $\textnormal{Sym}^r(\ol{\mathbb{F}}_p^2)$ denote the $r^{\textnormal{th}}$ symmetric power of the standard representation of $\textnormal{SL}_2(\mathbb{F}_q)$, viewed as a representation of $K_S$ and $K_S'$ by inflation.  We define:
\begin{center}
  \begin{tabular}{ccl}
   $D_{0}$ & := & $\left(\textnormal{Sym}^0(\ol{\mathbb{F}}_p^2),\quad\textnormal{Sym}^{p-1}(\ol{\mathbb{F}}_p^2)\otimes\textnormal{Sym}^{p-1}(\ol{\mathbb{F}}_p^2)^{\textnormal{Fr}}\otimes\cdots\otimes\textnormal{Sym}^{p-1}(\ol{\mathbb{F}}_p^2)^{\textnormal{Fr}^{f-1}},~ 1,~ j,~ j'\right)$;\\
   $D_{q-1}$ & := & $\left(\textnormal{Sym}^{p-1}(\ol{\mathbb{F}}_p^2)\otimes\textnormal{Sym}^{p-1}(\ol{\mathbb{F}}_p^2)^{\textnormal{Fr}}\otimes\cdots\otimes\textnormal{Sym}^{p-1}(\ol{\mathbb{F}}_p^2)^{\textnormal{Fr}^{f-1}},\quad\textnormal{Sym}^{0}(\ol{\mathbb{F}}_p^2),~ 1,~ j,~ j'\right)$;\\
   $D_{r}$ & := & $\left(\textnormal{Sym}^{r_0}(\ol{\mathbb{F}}_p^2)\otimes\textnormal{Sym}^{r_1}(\ol{\mathbb{F}}_p^2)^{\textnormal{Fr}}\otimes\cdots\otimes\textnormal{Sym}^{r_{f - 1}}(\ol{\mathbb{F}}_p^2)^{\textnormal{Fr}^{f-1}},\right.$ \\
   & & $\qquad\left.  \textnormal{Sym}^{p - 1 - r_0}(\ol{\mathbb{F}}_p^2)\otimes\textnormal{Sym}^{p - 1 - r_1}(\ol{\mathbb{F}}_p^2)^{\textnormal{Fr}}\otimes\cdots\otimes\textnormal{Sym}^{p - 1 - r_{f - 1}}(\ol{\mathbb{F}}_p^2)^{\textnormal{Fr}^{f-1}},~ \omega^{r},~ j,~ j'\right)$,
  \end{tabular}
 \end{center}
where $0 < r = \sum_{i = 0}^{f - 1} r_ip^i< q-1$ is the $p$-adic expansion of $r$, and where $j$ and $j'$ are inclusion maps.  
\end{defn}

Using the same arguments as in Section \ref{init}, one can show that given a diagram $D_r$ of the form above, the $I_S(1)$-invariants of every nonzero irreducible quotient of $H_0(X_S, \mathcal{C}(D_r))$ contain $M_r$, and therefore such a quotient must be supersingular.  Specializing to the case $q = p$, we obtain the following:

\begin{prop}\label{pureforsl2}
  Assume $q = p$.  Let $0 \leq r \leq p - 1$, let $M_r$ a supersingular $\hh_{\ol{\mathbb{F}}_p}(G_S,I_S(1))$-module, and let 
  $$D_r = (\textnormal{Sym}^r(\ol{\mathbb{F}}_p^2),~ \textnormal{Sym}^{p - 1 - r}(\ol{\mathbb{F}}_p^2),~ \omega^r,~ j,~ j')$$ 
  be the associated diagram as in Definition \ref{sl2diags}.  Then the diagram 
  $$E_r = (\textnormal{inj}_{K_S}(\textnormal{Sym}^r(\ol{\mathbb{F}}_p^2)),~ \textnormal{inj}_{K_S'}(\textnormal{Sym}^{p - 1 - r}(\ol{\mathbb{F}}_p^2)),~ \textnormal{inj}_{K_S}(\textnormal{Sym}^r(\ol{\mathbb{F}}_p^2))|_{I_S},~ \boldsymbol{j},~ \boldsymbol{j}')$$ 
  is pure with respect to $M_r$, where $\boldsymbol{j}$ and $\boldsymbol{j}'$ are isomorphisms.  
\end{prop}

\begin{thm}\label{ssingsl2}
 Assume $q = p$.  Let $0\leq r \leq p - 1$, let $M_r, D_r,$ and $E_r$ be as in Proposition \ref{pureforsl2}, and let $\psi:D_r\rightarrow E_r$ denote the natural embedding.  Then the representation afforded by $$\textnormal{im}(\psi_*:H_0(X_S,\mathcal{C}(D_r))\rightarrow H_0(X_S,\mathcal{C}(E_r))~)$$ is irreducible, admissible and supersingular.  For distinct supersingular modules $M_r, M_{r'}$, the resulting representations are nonisomorphic.  
\end{thm}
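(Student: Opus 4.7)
The plan is to reduce everything to the general machinery already developed, specifically to an $\textnormal{SL}_2(F)$-analog of Theorem \ref{Karol-3} together with an $\textnormal{SL}_2(F)$-analog of Corollary \ref{supersingcor}. As the authors note, the coefficient-system/diagram formalism of Section 6 transfers verbatim to $\textnormal{SL}_2(F)$: the building $X_S$ is a tree, $G_S$ has two orbits on vertices and one orbit on edges, and the proofs in Section 6 use only these structural features. Consequently the $\textnormal{SL}_2(F)$ version of Theorem \ref{Karol-3} (together with the containment statement from Proposition \ref{reduced to the irr quotient of H_0}) is available to us at once.

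First I would invoke the preceding Proposition, which asserts that $E_r$ is pure for $M_r$: the two socles $\textnormal{soc}_{K_S}(\textnormal{inj}_{K_S}(\rho))=\rho$ and $\textnormal{soc}_{K_S'}(\textnormal{inj}_{K_S'}(\rho'))=\rho'$ are irreducible by definition of an injective envelope, and the maps $j_p,j_p'$ are chosen to be $I_S$-isomorphisms so that conditions (i)--(iii) of Definition \ref{karol-2} are met. Applying the $\textnormal{SL}_2(F)$-analog of Theorem \ref{Karol-3} then yields immediately that
$$\pi_r := \textnormal{im}\bigl(\psi_*:H_0(X_S,\mathcal{C}(D_r))\to H_0(X_S,\mathcal{C}(E_r))\bigr)$$
is nonzero (since $\psi$ is an embedding), irreducible, and admissible, and moreover that $\textnormal{soc}_{K_S}(\pi_r|_{K_S})^{I_S(1)} \cong M_r$ canonically as $\hh_{\ol{\mathbb{F}}_p}(G_S,I_S(1))$-modules. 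Supersingularity of $\pi_r$ as a $G_S$-representation is then deduced from the $\textnormal{SL}_2(F)$-analog of Corollary \ref{supersingcor}, since $\pi_r$ is irreducible and $\pi_r^{I_S(1)}$ contains the supersingular Hecke-module $M_r$.

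For the non-isomorphism statement I would imitate the argument of Corollary \ref{Karol*}. Any $G_S$-equivariant isomorphism $\phi:\pi_r\stackrel{\sim}{\to}\pi_{r'}$ induces an isomorphism of their $K_S$-socles, and passing to $I_S(1)$-invariants produces an $\hh_{\ol{\mathbb{F}}_p}(G_S,I_S(1))$-equivariant isomorphism
$$M_r \;\cong\; \textnormal{soc}_{K_S}(\pi_r|_{K_S})^{I_S(1)} \stackrel{\sim}{\longrightarrow} \textnormal{soc}_{K_S}(\pi_{r'}|_{K_S})^{I_S(1)} \;\cong\; M_{r'}.$$
The classification of supersingular modules above shows that distinct values of $r$ give non-isomorphic $M_r$ (they are distinguished by the idempotent $e_{\omega^r}$ on which they are supported, and in the cases $r=0,q-1$ by the values of $\T_{w_s},\T_{w_{s'}}$), so $r=r'$.

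The genuine technical obstacle lies not in the theorem itself but in the preceding Proposition asserting purity of $E_r$, which requires an $\textnormal{SL}_2(F)$-analog of Proposition \ref{injdecomp} to guarantee that $\textnormal{inj}_{K_S}(\rho)|_{I_S} \cong \textnormal{inj}_{K_S'}(\rho')|_{I_S}$. This is where the hypothesis $q=p$ enters essentially, and where the conjugacy of $K_S$ and $K_S'$ by $\alpha_S$ is exploited to ensure that no choices among multiple $K_S'$-representations are required: unlike in the $\textnormal{U}(2,1)$ case, the symmetry forces a \emph{canonical} pure diagram $E_r$, and once it is in hand the present theorem follows by the soft arguments above.
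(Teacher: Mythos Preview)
Your proposal is correct and follows essentially the same route as the paper: the paper's proof simply states that the arguments of Proposition \ref{reduced to the irr quotient of H_0} and Theorem \ref{Karol-3} carry over verbatim to $G_S$, giving irreducibility, admissibility, and supersingularity, and that the non-isomorphism claim follows as in Corollary \ref{Karol*}. Your write-up unpacks these references in slightly more detail (and adds the useful observation about where $q=p$ is actually needed), but the underlying argument is the same.
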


\begin{proof}
 The proof is identical to the proofs of Propositions \ref{reduced to the irr quotient of H_0} and \ref{Karol-3}, and Corollary \ref{Karol*}.
\end{proof}

In this way, we have constructed $p$ irreducible supersingular representations, corresponding to the supersingular $\hh_{\ol{\mathbb{F}}_p}(G_S,I_S(1))$-modules.  In particular, for $F = \mathbb{Q}_p$, we recover the following classification of supersingular representations:

\begin{thm}\label{SL2}
 Let $M_r$ be a supersingular Hecke module for $\textnormal{SL}_2(\mathbb{Q}_p)$, and let $D_r$ and $E_r$ be the diagrams constructed above.  We then have
$$\textnormal{im}(\psi_*:H_0(X_S,\mathcal{C}(D_r))\rightarrow H_0(X_S,\mathcal{C}(E_r))~)\cong \pi_r,$$
where $\pi_r$ is the supersingular representation of $\textnormal{SL}_2(\mathbb{Q}_p)$ defined in \cite{Ab13}.  
\end{thm}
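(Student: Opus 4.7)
The plan is to leverage Abdellatif's explicit classification in \cite{Ab11} and compare Hecke modules on both sides. First I would invoke the $\textnormal{SL}_2$-analog of Corollary \ref{maincor}, which restricts the functor $\pi\mapsto\pi^{I_S(1)}$ to an injection from irreducible supersingular $\textnormal{SL}_2(\mathbb{Q}_p)$-representations into $\{M_0,\ldots,M_{p-1}\}$. Abdellatif's construction of $\pi_r$ in \cite{Ab11}, Chapitre 3, together with her main classification theorem, shows that this injection is in fact a bijection realized by $\pi_r\leftrightarrow M_r$: each $\pi_r$ has one-dimensional $I_S(1)$-invariants on which $\hh_{\ol{\mathbb{F}}_p}(G_S,I_S(1))$ acts through $M_r$, and every irreducible supersingular mod-$p$ representation of $\textnormal{SL}_2(\mathbb{Q}_p)$ is isomorphic to exactly one $\pi_r$.

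Next I would apply the $\textnormal{SL}_2$-analog of Theorem \ref{Karol-3}. Because $q=p$ forces each multiplicity appearing in the $\textnormal{SL}_2$-version of equation \eqref{decom for K} to equal one, there is no ambiguity in the construction of the pure diagram $E_r$ — this is what the introduction refers to as canonicity. Pulling through the proof of Theorem \ref{Karol-3}, the space $\textnormal{soc}_{K_S}(H_0(X_S,\mathcal{C}(E_r))|_{K_S})^{I_S(1)}$ is one-dimensional and carries an $\hh$-action isomorphic to $M_r$; this space sits inside $\pi_{E_r}^{I_S(1)}$, and Theorem \ref{ssingsl2} asserts that $\pi_{E_r}$ is irreducible supersingular. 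The bijection of the previous paragraph now forces $\pi_{E_r}\cong\pi_{r'}$ for a unique $r'$, and comparing the supersingular submodules of $\pi_{E_r}^{I_S(1)}\cong M_{r'}$ against the embedded $M_r$ yields $r'=r$.

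The main obstacle will be reconciling conventions rather than proving anything substantive. Abdellatif parametrizes her supersingular representations and Hecke modules using data inherited from Breuil's classification for $\textnormal{GL}_2(\mathbb{Q}_p)$ via restriction and decomposition under the center, whereas our $M_r$ is labeled by the $H_S$-character acting on $\pi^{I_S(1)}$ together with the $\T_{w_s}$- and $\T_{w_{s'}}$-eigenvalues. Matching the two lists amounts to identifying the $H_S$-character acting on $\pi_r^{I_S(1)}$ and the unique nonzero Hecke operator eigenvalue, and comparing with Definition \ref{ssingdef}-style data for $M_r$. Some care is needed here because restriction from $\textnormal{GL}_2(\mathbb{Q}_p)$ to $\textnormal{SL}_2(\mathbb{Q}_p)$ can twist the relevant characters, but for $q=p$ there are only $p$ modules on each side and so this pairing is essentially forced once one checks that the parameters are not permuted.
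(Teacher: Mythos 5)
Your proposal is correct and follows essentially the same route as the paper: both arguments invoke Abdellatif's count of exactly $p$ irreducible supersingular $\textnormal{SL}_2(\mathbb{Q}_p)$-representations, note that the constructed $\pi_{E_r}$ give $p$ pairwise nonisomorphic such representations by Theorem \ref{ssingsl2}, and then match the two lists by comparing the supersingular module $M_r$ sitting inside $\pi_{E_r}^{I_S(1)}$ against $\pi_r^{I_S(1)}\cong M_r$. Your added remarks on socle mechanics and on reconciling Abdellatif's parametrization with the $(\chi,\tj)$-labeling are reasonable caveats but do not change the argument; the paper treats the identification $\pi_r^{I_S(1)}\cong M_r$ as supplied directly by \cite{Ab11}.
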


\begin{proof}
By Th\'{e}or\`{e}me 4.12 of \emph{loc. cit.} and Theorem \ref{ssingsl2}, we must have
$$\textnormal{im}(\psi_*:H_0(X_S,\mathcal{C}(D_r))\rightarrow H_0(X_S,\mathcal{C}(E_r))~) \cong \pi_t$$
for some $0\leq t \leq p - 1$.  By Propositions 7.4, 7.6, and 7.7 of \cite{Ab14b} and the discussion above, we also have
 $$M_r\subset\textnormal{im}(\psi_*:H_0(X_S,\mathcal{C}(D_r))\rightarrow H_0(X_S,\mathcal{C}(E_r))~)^{I_S(1)} \cong \pi_t^{I_S(1)}\cong M_t,$$
and therefore $t = r$.  
\end{proof}

\begin{remark}
 When $q \neq p$, the above construction fails in a manner similar to the construction for $\textrm{U}(2,1)(E/F)$, meaning that pure diagrams of the form $(\textnormal{inj}_{K_S}(\textnormal{P}),~ \textnormal{inj}_{K_S'}(\textnormal{P}'),~ \textnormal{inj}_{I_S}(\textnormal{X}),~ \boldsymbol{j},~ \boldsymbol{j}')$ do not always exist.  One may translate the example of the previous section to the case of $\textnormal{SL}_2(F)$ to produce such an example explicitly.  
\end{remark}

\appendix

\section{Proof of Theorem \ref{ps}}\label{app}
Here we carry out the computations for Theorem \ref{ps}.  Recall that $\widetilde{\chi} = \widetilde{\zeta}\otimes\widetilde{\eta}$ is a character of the torus $T$, and the space of $I(1)$-invariants $\textnormal{ind}_B^G(\widetilde{\chi})^{I(1)}$ is spanned by the functions $f_1$ and $f_2$ (cf. Section \ref{ssingmodssec}).  The computations are split up according to the nature of $\chi$.  

Using Proposition 6 in \cite{BL94}, we can compute the action of $\T_{n_s}$ and $\T_{n_{s'}}$ on $f_i \in \textrm{ind}_B^G(\widetilde{\chi})^{I(1)}$.  Equation \eqref{decs} implies
$$f_i\cdot \T_{n_s} = \sum_{\sub{x,y\in \mathbb{F}_{q^2}}{x\ol{x} + y + \ol{y} = 0}}u(-x,\ol{y})n_s^{-1}.f_i.$$
Evaluating at $1$ and $n_s$ gives
\begin{eqnarray*}
 f_i\cdot \T_{n_s}(1) & = & \sum_{\sub{x,y\in \mathbb{F}_{q^2}}{x\ol{x} + y + \ol{y} = 0}}f_i(n_s^{-1})\\
 & = & 0\\
 f_i\cdot \T_{n_s}(n_s) & = & \sum_{\sub{x,y\in \mathbb{F}_{q^2}}{x\ol{x} + y + \ol{y} = 0}}f_i(n_su(-x,\ol{y})n_s^{-1})\\
 & = & \sum_{\sub{x,y\in \mathbb{F}_{q^2}}{x\ol{x} + y + \ol{y} = 0}}f_i(u^-(-\ol{x}\sqrt{\epsilon},-\ol{y}\epsilon))\\
 & \stackrel{\textnormal{eq.}~ \eqref{inv}}{=}& f_i(1) + \sum_{\sub{x,y\in \mathbb{F}_{q^2},y\neq 0}{x\ol{x} + y + \ol{y} = 0}}f_i(h_s(y^{-1}\sqrt{\epsilon}^{-1})n_s)\\
 & = & f_i(1) + f_i(n_s)\left(\sum_{\sub{x,y\in \mathbb{F}_{q^2},y\neq 0}{x\ol{x} + y + \ol{y} = 0}}\widetilde{\chi}(h_s(y^{-1}\sqrt{\epsilon}^{-1}))\right)\\
 & = & f_i(1) + f_i(n_s)\cdot\begin{cases}-1 & \textrm{if}~ \chi~\textnormal{is of trivial type,} \\ \phantom{-}0 & \textrm{if}~\chi~\textnormal{is hybrid},\\ \phantom{-}0 & \textrm{if}~ \chi~\textnormal{is regular.}\end{cases}
\end{eqnarray*}
where the last equality is obtained in precisely the same manner as in the proof of Proposition \ref{str} (cf. the computation of $\mT_{n_s}^2$).  

Likewise, equation \eqref{decs'} implies 
$$f_i\cdot \T_{n_{s'}} = \sum_{\sub{y\in \mathbb{F}_{q^2}}{y + \ol{y} = 0}}u^-(0,\varpi\ol{y})\alpha n_s^{-1}.f_i,$$ 
and thus we have
\begin{eqnarray*}
 f_i\cdot \T_{n_{s'}}(1) & = & \sum_{\sub{y\in \mathbb{F}_{q^2}}{y + \ol{y} = 0}}f_i(u^-(0,\varpi\ol{y})\alpha n_s^{-1})\\
 & \stackrel{\textnormal{eq.}~\eqref{inv}}{=} & \widetilde{\zeta}(-1)\widetilde{\chi}(\alpha)f_i(n_s) + \sum_{\sub{y\in \mathbb{F}_{q^2}, y\neq 0}{y + \ol{y} = 0}}f_i(h_s(-\varpi^{-1}y^{-1}\sqrt{\epsilon})n_su(0,\varpi^{-1}y^{-1})\alpha n_s^{-1})\\
 & = & \widetilde{\zeta}(-1)\widetilde{\chi}(\alpha)f_i(n_s) + f_i(1)\left(\sum_{\sub{y\in \mathbb{F}_{q^2}, y\neq 0}{y + \ol{y} = 0}}\widetilde{\chi}(h_s(-y^{-1}\sqrt{\epsilon}))\right)\\
 & = & \widetilde{\zeta}(-1)\widetilde{\chi}(\alpha)f_i(n_s) + f_i(1)\cdot \begin{cases}-1 & \textrm{if}~ \chi~\textnormal{is of trivial type,}\\ -1 & \textrm{if}~ \chi~\textnormal{is hybrid,}\\ \phantom{-}0 & \textrm{if}~ \chi~\textnormal{is regular.}\end{cases}\\
f_i\cdot \T_{n_{s'}}(n_s) & = & \sum_{\sub{y\in \mathbb{F}_{q^2}}{y + \ol{y} = 0}}f_i(n_su^-(0,\varpi\ol{y})\alpha n_s^{-1})\\
&  = & \sum_{\sub{y\in \mathbb{F}_{q^2}}{y + \ol{y} = 0}}\widetilde{\chi}(\alpha^{-1})f_i(1)\\
& = & 0.
\end{eqnarray*}

\end{document}